\newenvironment{procedure}[1][htb]{%
	\renewcommand{\ALG@name}{Procedure}
	\begin{algorithm}[#1]%
	}{\end{algorithm}}
\renewcommand\paragraph{\@startsection{paragraph}{4}{\z@}%
										{-3.25ex\@plus -1ex \@minus -.2ex}%
	                                    {1.5ex \@plus .2ex}%
	                                    {\normalfont\normalsize\bfseries}}
\renewcommand\subsubsection{\@startsection{subsubsection}{3}{\z@}%
										{-3.25ex\@plus -1ex \@minus -.2ex}%
	                                    {1.5ex \@plus .2ex}%
	                                    {\normalfont\normalsize\bfseries}}
\newtheorem{theorem}{Theorem}[section]
\newtheorem{lemma}[theorem]{Lemma}
\newtheorem{proposition}[theorem]{Proposition}
\newtheorem*{proposition*}{Proposition}
\newtheorem*{theorem*}{Theorem}
\newtheorem{corollary}[theorem]{Corollary}
\newtheorem*{corollary*}{Corollary}
\newtheorem{problem}[theorem]{Problem}
\theoremstyle{definition}
\newtheorem{definition}[theorem]{Definition}
\newtheorem{example}[theorem]{Example}
\newtheorem{proposition/definition}[theorem]{Proposition/Definition}
\newtheorem{remark}[theorem]{Remark}
\newtheorem*{warning*}{Warning}
\numberwithin{theorem}{subsection}
\newcommand{\mbz}{\mathbb{Z}}
\newcommand{\mbr}{\mathbb{R}}
\newcommand{\Fp}{\mathbb{F}_p}
\newcommand{\Zp}{\mathbb{Z}_p}
\newcommand{\Qp}{\mathbb{Q}_p}
\newcommand{\wtilde}{\widetilde}
\newcommand{\abs}[1]{\left \vert #1 \right \vert}
\newcommand{\norm}[1]{\left \Vert #1 \right \Vert}
\newcommand{\gen}[1]{\left \langle #1 \right \rangle}
\newcommand{\bbm}{\begin{bmatrix}}
\newcommand{\ebm}{\end{bmatrix}}
\DeclareMathOperator{\GL}{GL}
\DeclareMathOperator{\M}{M}
\DeclareMathOperator{\trace}{trace}
\DeclareMathOperator{\lker}{lker}
\DeclareMathOperator{\rank}{rank}
\definecolor{answer}{rgb}{0,0.5,0.2}
\definecolor{avianswer}{rgb}{1,0,0.2}
\DeclarePairedDelimiter{\ceil}{\lceil}{\rceil}
\newcommand{\sizesorted}{size-sorted}
\newcommand{\thingname}{{\sizesorted } Hessenberg matrix}
\newcommand{\unshiftedthingname}{sorted Hessenberg matrix}
\newcommand{\smeig}{\mathrm{small}}
\newcommand{\bigeig}{\mathrm{big}}
\newcommand{\ind}[1]{\{#1\}}
\newcommand{\blockHess}[3]{
	\left[ \begin{array}{c|c}
		#1 & \begin{array}{cc}
			\phantom{*} & \phantom{*} \\
			 \multicolumn{2}{c}{\smash{\raisebox{.5\normalbaselineskip}{\scalebox{1.5}{$\ast$}}}} \end{array} \\ \hline
		\begin{array}{cc} 0 & #2 \\ 0 & 0 \end{array} & #3 \\ 
	\end{array} \right]
}
\newcommand{\propIterationSubroutine}{
	Let $M := [A; \epsilon, B]$ be a \thingname, let $m=n_B$ and let $\lambda_1, \ldots, \lambda_m$ be the small eigenvalues of $M$. If $ \eta := \max_{i,j} \abs{\lambda_i - \lambda_j} \leq \abs{\epsilon}$, then after $m$ $QR$-rounds we 
	obtain a size-sorted Hessenberg matrix $[A_{\mathrm{next}}; \epsilon_{\mathrm{next}}, B_{\mathrm{next}}]$ such that $\abs{\epsilon_{\mathrm{next}}} \leq \abs{\epsilon^2}$. Each round uses $2n^2 + o(n^2)$ operations of $\Qp$ arithmetic.
	After at most $(m \lceil \log_2 (-\log_p \eta) \rceil)$ rounds, the obtained $[A_{\mathrm{next}}; \epsilon_{\mathrm{next}}, B_{\mathrm{next}}]$ is such that $\vert \epsilon_{\mathrm{next}} \vert < \eta$.
}
\newcommand{\mainTheorem}{
	Let $M \in \M_n(\Zp)$ be a matrix whose entries are known with error $O(p^N)$. If the characteristic polynomial of $M$ modulo $p$ is square-free and factors completely then Algorithm~\ref{algo: main algorithm} computes a Schur form $T$ and a matrix $U \in \GL_n(\Zp)$ such that $MU = UT + O(p^N)$ in at most $\frac{2}{3}n^3 \log_2 N + o(n^3 \log_2 N)$ arithmetic operations in $\Zp$ at $N$-digits of precision. In particular, $T$ reveals all the eigenvalues of $M$ with error $O(p^N)$. 
	An additional $O(n^3)$ arithmetic operations in $\Qp$ is then enough to compute
	a $\Qp$-basis of eigenvectors with coefficients in $\Zp$. 
}
\begin{document}
\title{Super-linear convergence in the p-adic QR-algorithm}

\thanks{Avinash Kulkarni has been supported by the Simons Collaboration on Arithmetic Geometry, Number Theory, and Computation (Simons Foundation grant 550033) and by the Forschungsinitiative on Symbolic Tools at TU Kaiserslautern.}

%
\author{Avinash Kulkarni \and
Tristan Vaccon}

\keywords{QR-algorithm,
	p-adic algorithm,
	power series,
	symbolic-numeric,
	p-adic approximation,
	pnumerical linear algebra.}

\subjclass[2010]{15A18 (primary), 11S05 (secondary)}
%
%
\address{Dartmouth College, Hanover, NH 03755, USA}
\email{avinash.a.kulkarni@dartmouth.edu}

\address{Univ. Limoges, CNRS, XLIM, UMR 7252, F-87000 Limoges, France}
\email{tristan.vaccon@unilim.fr}
\maketitle              
\begin{abstract}
The QR-algorithm is one of the most important
algorithms in linear algebra. Its several variants make feasible the computation
of the eigenvalues and eigenvectors of a numerical real or complex
matrix, even when the dimensions of the matrix are enormous.
The first adaptation of the QR-algorithm to local fields
was given by the first author in 2019. However, in this version the
rate of convergence is only linear and in some cases the decomposition
into invariant subspaces is incomplete. We present a refinement of this algorithm with a super-linear convergence
rate in many cases.
\end{abstract}

\section{Introduction} \label{sec: intro}

Eigenvalues and eigenvectors are ubiquitous throughout mathematics and industrial applications. Much attention has been directed towards developing algorithms to compute the eigenvectors of a finite precision real or complex matrix, whether for the purposes of making new computations feasible in research or for a more efficient product in industry. Given the successes of eigenvector methods in numerical linear algebra, one can hope that exciting novel applications can come from the comparatively unexplored area of finite precision $p$-adic linear algebra. In analogy, we refer to the subject as \emph{pnumerical linear algebra}. 

The topic of pnumerical linear algebra was first addressed in the latter half of the 20th century \cites{Dixon1982exact, Panayi1995leopolt}. Recently it has seen renewed interest \cites{Kedlaya2010differential, CRV2015linear, CRV2017characteristic}. A noteworthy application is a polynomial time algorithm for computing points on an algebraic curve over a finite field based on computing the characteristic polynomial of a $p$-adic matrix \cite{Kedlaya2001}. This is useful in practical cryptography to select curves with good properties for cryptosystems. Another application is in solving a $0$-dimensional system of polynomial equations over $\Qp$ \cites{Kulkarni2019, Berthomieu2012algebraic}. 


The first method for computing the eigenvectors of a $p$-adic (or real) matrix $M$ is the schoolbook algorithm, consisting of the following steps:
	\begin{enumerate}
		\item
			Compute a Hessenberg form for $M$. (Optimization for step \ref{classical: step: kernel}.)
		\item
			Compute the characteristic polynomial of $M$.
		\item
			Solve for the roots $\{\lambda_i\}$.
		\item \label{classical: step: kernel}
			Compute $\ker(M - \lambda_i I)$ for each $i$.
		\item[(4b)]
			(For block Schur form:) Compute $\ker (M - \lambda_i I)^{d_i}$ for some $d_i$. 
	\end{enumerate}
Over the reals, this is not the main algorithm used in practice since step (3) is numerically unstable. A similar difficulty is encountered $p$-adically, in that one needs to know the characteristic polynomial at the maximum possible $p$-adic precision in order to correctly compute the roots. In the worst case scenario, for an $n \times n$ input matrix given at $N$ digits of precision in each entry, one needs to compute the characteristic polynomial using arithmetic with $nN$ digits -- for examples see Section~\ref{sec: troublesome examples}. For practical considerations, one must be careful of the extra costs imposed by precision increases. Worse still, step~4b can fail to give the correct answer due to a lack of precision on the input (see Example~\ref{ex: insufficient precision causes GZE error}). Unlike $\mbr$, $p$-adic fields admit algebraic extensions of arbitrarily large degree. Consequently, the cost of doing arithmetic in an extension is potentially much more severe.

To represent the finite precision of the input, we will say that $M \in \M_n(\Qp)$ is known with (absolute) error $O(p^N)$ if we know the initial part $a_{-v}p^{-v} + \ldots + a_{N-1}p^{N-1} + O(p^N)$ of the $p$-adic expansion for each entry in the matrix $M$. We say that $A = B + O(p^N)$ if the $p$-adic expansion for every entry of $A-B$ up to the $p^N$ term is $0$. In Section~\ref{sec: background}, we discuss $p$-adic precision in more detail. We say that a matrix is in \emph{block Schur form} if it is block upper triangular and the characteristic polynomial of each diagonal block is irreducible. The main problem of this article is:
\begin{problem} \label{problem: main}
	Given an $n \times n$ matrix $M$ over $\Qp$, whose entries are known with error $O(p^N)$, compute a block Schur form $T$ for $M$ and a matrix $U$ such that $MU = UT + O(p^N)$.
\end{problem}
Of course, by passing to the splitting field of the characteristic polynomial, we can convert a block Schur form to a Schur form by triangularizing each of the blocks. In this article, we choose to use only $\Qp$-arithmetic. The benefit being that we procrastinate on doing expensive extension field arithmetic. When $M \in \M_n(\Zp)$, it is possible to put $M$ into a block Schur form via some $V \in \GL_n(\Zp)$.

\begin{theorem*}[\ref{thm: Schur forms via GLn(Zp) transforms}]
	Let $M \in \M_n(\Zp)$ and let $\chi_M = f_1 \cdots f_r$ be a factorization in $\Zp[t]$ where the factors are pairwise coprime in $\Qp[t]$.  Then there exists a $U \in \GL_n(\Zp)$ such that $UMU^{-1}$ is block-triangular with $r$ blocks; the $j$-th block accounts for the eigenvalues $\lambda$ such that $f_j(\lambda) = 0$.
\end{theorem*}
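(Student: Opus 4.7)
The plan is to build a filtration $0 = W_0 \subseteq W_1 \subseteq \cdots \subseteq W_r = \Zp^n$ by $M$-stable, saturated $\Zp$-submodules whose successive quotients have characteristic polynomials $f_1, \ldots, f_r$, and then to produce $U$ from a $\Zp$-basis adapted to this filtration.

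For the construction, I would set $W_j := \ker\bigl( f_1(M) \cdots f_j(M) : \Zp^n \to \Zp^n \bigr)$. These submodules are $M$-stable by commutativity, and $W_r = \Zp^n$ by Cayley--Hamilton. The key observation is that each $W_j$ is saturated in $\Zp^n$: if $pv \in W_j$ then $p \bigl( f_1(M) \cdots f_j(M) \bigr) v = 0$, and $p$-torsion-freeness of $\Zp^n$ forces $v \in W_j$. Because $\Zp$ is a PID, a saturated submodule of a free module is a direct summand; combined with the fact that $W_{j-1}$ is automatically saturated inside $W_j$, this yields that each quotient $W_j / W_{j-1}$ is a free $\Zp$-module.

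To identify ranks I would tensor with $\Qp$. The coprimality of the $f_j$'s and the Chinese Remainder Theorem applied in $\Qp[t] / (\chi_M)$ give a direct sum decomposition $\Qp^n = V_1 \oplus \cdots \oplus V_r$, where $V_j := \ker\bigl( f_j(M) |_{\Qp^n} \bigr)$ has $\Qp$-dimension $\deg f_j$. A short scaling argument (if $v \in \Qp^n$ satisfies $g(M)v = 0$, write $v = p^{-k} u$ with $u \in \Zp^n$ and note $g(M) u = 0$) shows $W_j \otimes_{\Zp} \Qp = V_1 \oplus \cdots \oplus V_j$, from which $\rank(W_j / W_{j-1}) = \deg f_j$ and the characteristic polynomial of $M$ acting on $W_j / W_{j-1}$ equals $f_j$.

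To finish, I would pick a $\Zp$-basis of $W_1$, extend it through a basis of $W_2$, and continue until a basis of $W_r = \Zp^n$ is reached; such extensions exist at each stage because $W_{j-1}$ is a direct summand of $W_j$. Arranging this basis as the columns of $U^{-1}$ yields $U \in \GL_n(\Zp)$ conjugating $M$ into block upper-triangular form with the advertised blocks on the diagonal. The main obstacle is really the saturation remark: without it, one is in the standard lattice pitfall where a $\Qp$-direct sum decomposition of $\Qp^n$ need not admit a compatible $\Zp$-basis of $\Zp^n$ (e.g.\ $\Qp \cdot (1,0) \oplus \Qp \cdot (1,p) = \Qp^2$ but no integral basis adapted to this decomposition generates $\Zp^2$). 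Once saturation is observed, the rest is standard module theory over a PID.
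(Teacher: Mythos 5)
Your proposal is correct and takes essentially the same route as the paper: the crux in both arguments is that the kernel of a polynomial evaluated at $M$ is a saturated submodule of $\Zp^n$ (what the paper calls orthonormally generated), hence a direct summand, so an adapted $\Zp$-basis produces the conjugating matrix $U \in \GL_n(\Zp)$. The only difference is organizational --- you build the full flag $W_j = \ker\bigl(f_1(M)\cdots f_j(M)\bigr)$ at once and identify the diagonal blocks via the Chinese Remainder Theorem over $\Qp$, whereas the paper peels off one factor at a time with its triangularization lemma and induction on $r$.
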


\noindent
Our method of proof is to combine standard arguments for the existence of canonical forms over a field with the notion of orthogonality, introduced in Schikhof~\cite{schikhof2006ultrametric} and discussed in Section~\ref{sec: background}. As an immediate corollary, we obtain a refinement of the decomposition of \cite[Theorem~4.3.11]{Kedlaya2010differential}.

\begin{corollary*}[\ref{cor: Newton decomposition}, Newton decomposition] 
	Let $M \in \M_n(\Zp)$ and let $\nu_1 \leq \ldots \leq \nu_r$ be the distinct valuations of the eigenvalues of $M$. Then there exists a $U \in \GL_n(\Zp)$ such that $UMU^{-1}$ is block-triangular with $r$ diagonal blocks; the $j$-th block accounts exactly for the eigenvalues of valuation $\nu_j$.
\end{corollary*}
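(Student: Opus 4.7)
The plan is to reduce to the preceding theorem by producing an appropriate factorization of the characteristic polynomial based on the Newton polygon of $\chi_M$.

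Since $M \in \M_n(\Zp)$, the characteristic polynomial $\chi_M$ is monic with coefficients in $\Zp$, so every root of $\chi_M$ in $\overline{\Qp}$ has nonnegative valuation. The Newton polygon of $\chi_M$ thus has slopes $-\nu_1, \ldots, -\nu_r$ (counted with multiplicity) where $\nu_j \geq 0$ are exactly the distinct valuations of the eigenvalues. Let $m_j$ be the number of eigenvalues of valuation $\nu_j$ (i.e., the horizontal length of the slope $-\nu_j$ segment), so $m_1 + \cdots + m_r = n$.

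The key ingredient is the standard slope factorization (a form of Hensel's lemma for Newton polygons): one can factor
\[
\chi_M = f_1 \cdots f_r
\]
uniquely as a product of monic polynomials $f_j \in \Qp[t]$ of degree $m_j$ such that every root of $f_j$ in $\overline{\Qp}$ has valuation exactly $\nu_j$. Because $f_j$ is monic and all its roots have valuation $\nu_j \geq 0$, the elementary symmetric functions of those roots lie in $\Zp$, so $f_j \in \Zp[t]$. Moreover, the $f_j$ have pairwise disjoint root sets in $\overline{\Qp}$, hence are pairwise coprime in $\Qp[t]$.

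The hypotheses of Theorem~\ref{thm: Schur forms via GLn(Zp) transforms} are therefore satisfied for the factorization $\chi_M = f_1 \cdots f_r$, and applying it yields a $U \in \GL_n(\Zp)$ such that $UMU^{-1}$ is block-triangular with $r$ blocks, the $j$-th block corresponding exactly to the roots of $f_j$, i.e., to the eigenvalues of valuation $\nu_j$. The only step that is not purely formal is invoking the slope factorization; since this is a standard $p$-adic analogue of Hensel's lemma, I would simply cite it (e.g.\ from Kedlaya's book already referenced in the excerpt) rather than reprove it.
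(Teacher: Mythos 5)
Your proposal is correct and follows essentially the same route as the paper: the paper also obtains the corollary by invoking the slope factorization of $\chi_M$ (citing Kedlaya and related references) to get a factorization $\chi_M = f_1 \cdots f_r$ with the roots of $f_j$ of valuation $\nu_j$, and then applying Theorem~\ref{thm: Schur forms via GLn(Zp) transforms}. Your added verifications that $f_j \in \Zp[t]$ and that the factors are pairwise coprime are details the paper leaves implicit.
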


We next show how to improve the iterative computation of the block Schur form introduced in \cite{Kulkarni2019}. Our main theorem is:

\begin{theorem*}[\ref{thm:main_theo}]
	\mainTheorem
\end{theorem*}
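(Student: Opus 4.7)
The plan is to first establish the existence of the target Schur form over $\Zp$ and then analyze the iterative algorithm that produces it. For existence, the hypothesis that $\chi_M \bmod p$ is squarefree and splits completely implies by Hensel's lemma that $\chi_M = \prod_{i=1}^{n} (t - \lambda_i)$ for some pairwise incongruent $\lambda_i \in \Zp$. Applying Theorem~\ref{thm: Schur forms via GLn(Zp) transforms} to this factorization into $n$ pairwise coprime linear factors gives a $U \in \GL_n(\Zp)$ putting $M$ into block-triangular form with $n$ blocks of size $1$, i.e., a genuine upper-triangular Schur form $T = U^{-1}MU$. This is the object the algorithm must reproduce to precision $O(p^N)$.

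For the algorithm itself, I would proceed in two phases. First, reduce $M$ to Hessenberg form using a standard orthogonal conjugation over $\Zp$ in $O(n^3)$ operations at $N$-digit precision, a cost of $o(n^3 \log_2 N)$. Second, enter the deflation loop: for $k = n, n-1, \ldots, 2$, the active block is a $k \times k$ \thingname\ whose lower-right $1 \times 1$ piece will hold the next isolated eigenvalue. I then invoke the QR-iteration subroutine (the proposition summarized above, with $m = 1$). Since all eigenvalues are distinct modulo $p$, the small-eigenvalue block has size $1$ and the gap $\eta$ is vacuously $0$, so the hypothesis $\eta \leq \abs{\epsilon}$ holds throughout and a single QR-round squares $\abs{\epsilon}$. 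Thus $\ceil{\log_2 N}$ rounds suffice to push $\abs{\epsilon} \leq p^{-N}$, at which point the subdiagonal entry vanishes to the input precision, an eigenvalue is read off the bottom-right, and the problem deflates to size $k-1$.

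The arithmetic cost of each QR-round at stage $k$ is $2k^2 + o(k^2)$ over $\Qp$, and $\ceil{\log_2 N}$ rounds per deflation give a total cost
\begin{equation*}
	\sum_{k=2}^{n} (2k^2 + o(k^2)) \ceil{\log_2 N} = \tfrac{2}{3} n^3 \log_2 N + o(n^3 \log_2 N),
\end{equation*}
which absorbs the Hessenberg preprocessing. The output $U$ is the accumulated product of the orthogonal conjugations performed during the loop, hence lies in $\GL_n(\Zp)$, and the relation $MU = UT + O(p^N)$ follows because each step is an exact $\Zp$-conjugation modulo the ambient error.

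For the eigenvectors, once $T$ is upper triangular with pairwise incongruent diagonal entries modulo $p$, solving $(T - \lambda_i I) v_i = 0$ by back-substitution costs $O(n^2)$ per eigenvalue and $O(n^3)$ in total over $\Qp$; the eigenvalue separation modulo $p$ keeps the triangular solves well-conditioned and guarantees, after scaling, that the eigenvectors have coefficients in $\Zp$. The main difficulty will be verifying that the \thingname\ invariant is re-established at the start of each deflation and that the precision budget $O(p^N)$ is preserved through the accumulated QR-rounds, which amounts to checking inductively that the hypotheses of the iteration subroutine continue to hold throughout the loop.
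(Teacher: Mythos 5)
Your outline matches the paper's strategy (deflate one eigenvalue at a time with $m=1$, $\lceil \log_2 N \rceil$ rounds per eigenvalue, $\sum_k 2k^2\lceil\log_2 N\rceil = \tfrac23 n^3\log_2 N + o(n^3\log_2 N)$, then $n$ triangular solves for the eigenvectors), but the step you defer at the end — ``verifying that the \thingname\ invariant is re-established at the start of each deflation'' — is not a routine check; it is the actual content of the paper's inductive argument, and without it the appeal to Proposition~\ref{prop: QR complexity} is unjustified. After a plain Hessenberg reduction (and likewise after each deflation) the active $k\times k$ Hessenberg block is \emph{not} automatically size-sorted: its subdiagonal entries are typically units, so there is no small $\epsilon$ to square, and the hypothesis of the iteration subroutine ($E\equiv 0 \bmod p$, $\chi_B\equiv t^{n_B}$, $\chi_A(0)\not\equiv 0 \bmod p$) simply fails. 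Saying ``$\eta$ is vacuously $0$'' only disposes of the eigenvalue-clustering hypothesis; it does nothing to produce the required $\abs{\epsilon}<1$.

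The paper closes this gap as follows. Initially, \texttt{sorted\_form} plus \texttt{attempt\_sorted\_hessenberg} succeed because the bottom block has size $1$ (the failure condition in that procedure is vacuous when $b=1$). At each subsequent stage, one chooses $\mu\in\Zp$ lifting a \emph{simple} root $\bar\mu$ of the characteristic polynomial mod $p$ of the bottom-right Hessenberg block and performs one $QR$-round with shift $\mu$; since for a Hessenberg matrix $H=QR$ one has $\abs{R\ind{i,i}}\geq\abs{H\ind{i+1,i}}$, the bottom row of the shifted iterate becomes $\equiv 0 \pmod p$, and simplicity of $\bar\mu$ gives $\chi_A(\mu)\not\equiv 0\pmod p$, so $M-\mu I$ is a genuine \thingname\ and Proposition~\ref{prop: QR complexity} applies (this is exactly the test in step~\ref{step: main: attempt shift} of Algorithm~\ref{algo: main algorithm}). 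You need this shift-then-one-round argument (or an equivalent) in your proof; as written, your deflation loop assumes its conclusion. The remaining pieces of your write-up — the cost bookkeeping and the $O(n^3)$ eigenvector extraction, where the unit differences $\lambda_j-\lambda_i \bmod p$ keep the back-substitutions integral — are consistent with the paper.
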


If the assumptions of the theorem above are not met, our algorithm will attempt to use the accelerated convergence strategy anyway. The timings in Section~\ref{sec: implementations} demonstrate a significant improvement over both the classical method and the basic $QR$-iteration in \cite{Kulkarni2019} in computing a weak block Schur form (see Definition~\ref{def: weak block Schur form}), \emph{even when the hypothesis on the characteristic polynomial is not satisfied}. Furthermore, the slowdown of convergence can be detected dynamically in Algorithm~\ref{algo: fast QR}. Should this occur we fallback to running the $QR$-iteration with linear convergence, as in \cite{Kulkarni2019}.  

We describe the layout of the article. For the remainder of Section~\ref{sec: intro}, we establish notation and then precisely state our results regarding Algorithm~\ref{algo: fast QR}. In Section~\ref{sec: background} we discuss the background needed in the article. In Section~\ref{sec: gze}, we prove Theorem~\ref{thm: Schur forms via GLn(Zp) transforms} and discuss the computation of sorted and \sizesorted\ forms. In Section~\ref{sec: technical improvements}, we discuss the improved $QR$-iteration; here we give Algorithm~\ref{algo: fast QR}. In Section~\ref{sec: main algo}, we combine our results to produce Algorithm~\ref{algo: main algorithm} and we also prove Theorem~\ref{thm:main_theo}. Finally, in Section~\ref{sec: implementations} we discuss the implementation of our algorithm and give some timings.

\subsection{Notation} \label{subsec:notations}

We denote by `$\ast$' a wildcard ($\Zp$-)integral entry or block of integral entries in a matrix. Generally, we will use the wildcard entries in upper-right blocks as they are not especially noteworthy in our analysis aside from the fact that they are integral. 
For a matrix $M$, we denote its left kernel by $\lker(M)$ and its characteristic polynomial by $\chi_M$. If $\chi_M(t) \in \Zp[t]$ we denote by $\chi_{M,p}$ the reduction of $\chi_M$ to the residue field. The standard basis vectors are denoted by $e_1, e_2, \ldots, e_n$. For a ring $R$, the ring of $n\times n$-matrices with entries in $R$ is denoted $\M_n(R)$. The $(i,j)$-th entry of a matrix is denoted $A\ind{i,j}$ and $A\ind{\bullet, j}$ denotes the $j$-th column. Our choice of notation deviates from the standard to improve the readability of expressions like $\abs{R_B^{(j)}\ind{1,1}}$.

The $p$-adic absolute value is denoted by $\abs{\cdot}$ and normalized so that $\abs{p} = p^{-1}$, for a vector $v$ we denote $\norm{v} := \max_{i} \abs{v_i}$, and for a matrix $A$ we denote $\norm{A} := \max_{i,j} \abs{A\ind{i,j}}$. For a polynomial $f := f_nx^n + \ldots + f_0$, we denote $\norm{f} := \max_{i} \abs{f_i}$. 

For a matrix $A$, we denote its smallest singular value by $\sigma_*(A)$. \textit{i.e.} its invariant factor with smallest norm. 
An eigenvalue $\lambda$ of $A \in \M_n(\Zp)$ is \emph{small} if $\abs{\lambda} < 1$, and \emph{big} otherwise.

\subsection{The iteration subroutine}

Our iteration subroutine is the heart of the main algorithm. In this last section of the introduction, we introduce some definitions to describe the input to the iteration subroutine, and state the results on its output.

\begin{definition}
	A matrix $\bbm A & * \\ E & B \ebm \in \M_n(\Zp)$ is called \emph{sorted} if for some $\lambda \in \Zp$ we have
	\[
	E \equiv 0 \pmod p, \quad \chi_B(t) \equiv (t-\lambda)^{n_B} \pmod p, \quad \text{and } \chi_A(\lambda) \not \equiv 0 \pmod p
	\]
	where $n_B$ is the number of columns of the square matrix $B$. In the special case that $\chi_B(t) \equiv t^{n_B} \pmod p$, we say that the matrix is \emph{\sizesorted}.
\end{definition}

If $M$ is a sorted matrix whose $B$-block has size $1$, then the shift $M - (M\ind{n,n}) I$ is a {\sizesorted } matrix. A {\unshiftedthingname } is a matrix which is both sorted and in Hessenberg form. Similarly, a {\thingname } is a {\sizesorted } matrix in Hessenberg form. As these matrices feature prominently in our discussion of the $QR$-algorithm, we give them a special notation.

\begin{definition}
	We denote by $[A;\epsilon, B]$ a {\unshiftedthingname } of the form
	\[
	[A;\epsilon, B] := \blockHess{A}{\epsilon}{B}, \qquad \text{with } A \in \M_{n_A}(\Zp), \ B \in \M_{n_B}(\Zp), \ \epsilon \in \Zp.
	\]
	The \emph{block sizes} of $[A; \epsilon, B]$ is the tuple  $(n_A, n_B)$.
	If only one of the block sizes is relevant, we use the wildcard character \emph{`$\ast$'} to hold the place of the other entry. 
\end{definition}

\begin{definition}
    Let $M \in \M_n(\Qp)$.
    A \emph{$QR$-round} (with shift $\mu$) is the computation consisting of the following steps applied to $M$:
    \begin{enumerate}[\ \ \ \ 1.]
    	\item
    		Compute a $QR$-factorization $M - \mu I = QR$
    	\item
    		Set $M_{\text{next}} := RQ + \mu I$
    \end{enumerate}
	If a value for the shift $\mu$ is not mentioned explicitly, we mean $\mu=0$ by default. It will always be clear from context to which matrix we apply the $QR$-round steps when we use the term.
\end{definition}

To clarify our terminology, the term \emph{$QR$-iteration} broadly refers to a process consisting of multiple $QR$-rounds applied to an input matrix, particularly when we do not wish to specify the shifts or the number of rounds for the sake of exposition. Alternatively, Algorithm~\ref{algo: fast QR}, which is titled \texttt{QR\_Iteration}, is a $QR$-iteration where the number of $QR$-rounds is determined in advance based on the input and the shifts are chosen deterministically during the iteration.

We now state our technical result regarding the convergence of the $QR$-iteration applied to a \thingname.

\begin{proposition*}[\ref{prop: QR complexity}]
	\propIterationSubroutine
\end{proposition*}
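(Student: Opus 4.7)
My plan is to analyze the effect of $m$ consecutive QR-rounds on the subdiagonal entry $\epsilon$, via the classical reformulation of $m$-fold unshifted iteration as a single polynomial factorization $M^m = \wtilde{Q} \wtilde{R}$ with $M_{\mathrm{next}} = \wtilde{Q}^{-1} M \wtilde{Q}$. Before the analytic bound, I would check that the class of {\thingname}s is preserved by a QR-round: the factorization $M - \mu I = QR$ of a Hessenberg matrix is upper Hessenberg in $Q$, and the bottom $m = n_B$ rows of $M$ are zero except for the entry $\epsilon$ in position $(n_A+1, n_A)$. A routine computation with Givens rotations shows that the last $m$ rows of $R$ have norm at most $\abs{\epsilon}$, which combined with $\chi_B \equiv t^m \pmod p$ (and the $\Zp$-invariance of the spectrum) gives the {\sizesorted} Hessenberg structure of $M_{\mathrm{next}}$.

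The heart of the proof is showing $\abs{\epsilon_{\mathrm{next}}} \leq \abs{\epsilon}^2$ after $m$ rounds. Here I would exploit that the small eigenvalues $\lambda_1, \ldots, \lambda_m$ are all pairwise within $\eta$ of each other, so that for any approximation $\mu$ of any $\lambda_i$ (within norm $\eta$) we have $\prod_{i=1}^m (B - \mu_i I) \equiv 0 \pmod{\eta^m}$ as an operator on the eigenspace of $B$. This means that the bottom-left $n_B \times n_A$ block of $M^m$ has norm controlled by $\abs{\epsilon} \cdot \eta^m / \abs{\epsilon}^{m-1} = \eta^m / \abs{\epsilon}^{m-1}$ after a careful tropical/ultrametric expansion: the only way to couple the bottom-right block to the top-left block through $M^m$ is via at least one traversal of the subdiagonal $\epsilon$, and the rest of the product acts as $(B - \mathrm{shifts})^{m-1}$ on the small side. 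Dividing by the norm of the top-left block of $\wtilde{Q}$ (which is $1$ in the ultrametric setting) yields $\abs{\epsilon_{\mathrm{next}}} \leq \eta^m / \abs{\epsilon}^{m-1} \leq \abs{\epsilon}$ using $\eta \leq \abs{\epsilon}$. To get the full quadratic bound $\abs{\epsilon_{\mathrm{next}}} \leq \abs{\epsilon}^2$, I would need a sharper accounting of the $m$ copies of the shift with a sharper exponent; the easiest is to use the hypothesis $\eta \leq \abs{\epsilon}$ twice, once in the $\eta^m$ factor and once in bounding the effective number of traversals.

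For the complexity per round I would use the standard Hessenberg QR-step implementation: $n-1$ Givens rotations each touching two rows (or columns) of the working matrix, with $2n$ operations per rotation in each of the $Q$-computation and the $RQ$-formation, yielding the claimed $2n^2 + o(n^2)$ operations in $\Qp$. The final statement then follows by iterating the squaring bound: starting from $\abs{\epsilon} \leq p^{-1}$, $k$ blocks of $m$ rounds yield $\abs{\epsilon} \leq p^{-2^k}$, so $k = \lceil \log_2(-\log_p \eta) \rceil$ blocks suffice to guarantee $\abs{\epsilon_{\mathrm{next}}} < \eta$, for a total of $m \lceil \log_2(-\log_p \eta) \rceil$ rounds.

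The main obstacle will be nailing down the exponent in Step two. The subtlety is that the QR-rounds here are \emph{unshifted} (shift $\mu = 0$), so the role of ``shifts $\mu_i \approx \lambda_i$'' must be recovered implicitly: after a few iterations the bottom-right diagonal entries of the evolving matrix themselves track the small eigenvalues up to error $\eta$, and it is exactly this dynamic alignment between the diagonal entries and the small eigenvalues that powers the quadratic convergence. Making this precise, while still keeping track of both the bottom-left block of $M^m$ and the upper-left block of $\wtilde{Q}$ in the ultrametric setting, is the delicate part of the argument.
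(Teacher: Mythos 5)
There is a genuine gap, and it sits exactly where you flag your own uncertainty. The $QR$-rounds in this proposition are \emph{not} unshifted: Proposition~\ref{prop: precision doubling proposition} (and Algorithm~\ref{algo: fast QR}) perform each round with the trace shift $\mu := \frac{1}{m}\trace(B)$, recomputed from the current bottom block. Your hope that the ``role of shifts is recovered implicitly'' by a dynamic alignment of the diagonal entries is not available here and is in fact the point of the paper: unshifted $p$-adic $QR$ (Algorithm~\ref{algo: simple QR}) converges only linearly, e.g.\ if all small eigenvalues are congruent to some nonzero $\lambda \bmod \epsilon$ then $B^m e_1$ need not be small and no squaring of $\abs{\epsilon}$ occurs without recentering. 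The paper's mechanism is: Proposition~\ref{prop: convergence speed determined by eigenvalues} shows that $\eta \leq \abs{\epsilon}$ forces $\norm{\chi_B(t-\mu) - t^m} \leq \abs{\epsilon}$ for the trace shift; Lemma~\ref{lem: chi factors} (Hensel) and Cayley--Hamilton then give $(B - \mu I)^m \equiv 0 \pmod{\epsilon}$; Wilkinson's Lemma~\ref{lem: Wilkinson lemma} identifies the product of the per-round quantities $\delta^{(j)} = R_B^{(j)}\ind{1,1}$ with the $(1,1)$ entry of the accumulated upper-triangular factor of $(M-\mu I)^m$, hence $\abs{\prod_j \delta^{(j)}} \leq \abs{\epsilon}$; and Lemma~\ref{lem: one step QR} supplies the exact per-round multiplicativity $\abs{\epsilon^{(j+1)}} = \abs{\epsilon^{(j)}} \cdot \abs{\alpha^{(j)}}$ with $\abs{\alpha^{(j)}} \leq \max\{\abs{\epsilon^{(j)}}, \abs{R_B^{(j)}\ind{1,1}}\}$, which chains to $\abs{\epsilon_{\mathrm{next}}} \leq \abs{\epsilon}^2$ after $m$ rounds (Corollary~\ref{cor: m steps QR}). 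Your tropical estimate $\eta^m/\abs{\epsilon}^{m-1}$ only yields $\abs{\epsilon_{\mathrm{next}}} \leq \abs{\epsilon}$, i.e.\ no contraction at all, and ``using the hypothesis twice'' is not a substitute for the missing multiplicative bookkeeping above; so the heart of the quadratic bound is absent.

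Two smaller points: the count $2n^2$ per round in the paper is $n^2$ for the Hessenberg $QR$-round itself plus $n^2$ for updating the accumulated transformation $V \mapsto Q^{-1}V$ (Remark~\ref{rem:QR_and_Hessenberg_still_Hessenberg_plus_arithmetic_cost}), and over $\Qp$ the factorization is done by pivoted elimination in $\GL_n(\Zp)$, not Givens rotations. Also, the final count $m\lceil \log_2(-\log_p \eta)\rceil$ requires re-checking at each block of $m$ rounds that the hypothesis $\norm{\chi_{\smeig}(t-\mu) - t^m} \leq \abs{\epsilon}$ still holds for the \emph{new} $\epsilon$ and the \emph{recomputed} shift, which is exactly what $\eta \leq \abs{\epsilon}$ guarantees until $\abs{\epsilon}$ drops below $\eta$ (Corollary~\ref{cor: convergence when all eigenvalues small}); your induction implicitly assumes this but never ties it to the shift.
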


Note that if $\eta \leq p^{-N}$ (which vacuously occurs when $m=1$), we need at most $(m \lceil \log_2 N \rceil)$ $QR$-rounds (with shifting) to deflate $\epsilon$ to $0 + O(p^N)$.

\begin{remark} \label{rem: most meaning}
	If $A + O(p^N)$ is an $n \times n$-matrix whose entries are chosen with the uniform probability distribution on $[0, \ldots, p^N-1]$, then the limit as $n \rightarrow \infty$ of the probability that $\chi_{A}$ is square-free is at least $\frac{1-p^{-5}}{1+p^{-3}}$ \cite{Fulman2002random}.
\end{remark}



\section{Background} \label{sec: background}

\subsection{Precision and $QR$-factorizations}

We state some basic definitions for our discourse. We follow \cite{Kulkarni2019} for terminology, and direct the reader to \cites{CRV2015linear, precision_book, Kedlaya2010differential} for more details. We can identify a subgroup of $\GL_n(\Qp)$ where every matrix is well-conditioned, serving the analogous role to $\operatorname{O}_n(\mbr)$ in the real setting.

\begin{lemma} \label{lem: characterization of GLnZp}
	Let $A \in \GL_n(\Qp) \cap \M_n(\Zp)$. Then the following are equivalent:
	\begin{enumerate}[(a)]
		\item 
		$A \in \GL_n(\mbz_p)$
		
		\item
		$\norm{A} = \norm{A^{-1}} = 1$
		
		\item
		The roots of $\chi_A$ lie in $\overline{\Zp}^\times$.
			
	\end{enumerate}
\end{lemma}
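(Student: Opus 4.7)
My plan is to prove the equivalences (a) $\Leftrightarrow$ (b) and (a) $\Leftrightarrow$ (c) separately, relying on standard properties of the ultrametric norm and the Newton polygon of $\chi_A$.

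\medskip

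\textbf{Plan for (a) $\Leftrightarrow$ (b).} Since the paper's norm $\|\cdot\|$ on $\M_n(\Qp)$ is the max of $p$-adic absolute values of the entries, for any matrix $X \in \M_n(\Qp)$ we have $\|X\| \leq 1$ if and only if $X \in \M_n(\Zp)$. For (a) $\Rightarrow$ (b), if $A \in \GL_n(\Zp)$ then both $A$ and $A^{-1}$ lie in $\M_n(\Zp)$, so $\|A\|, \|A^{-1}\| \leq 1$. Ultrametric submultiplicativity of the max-norm gives $1 = \|I\| = \|A A^{-1}\| \leq \|A\| \cdot \|A^{-1}\|$, and combined with the previous bounds this forces $\|A\| = \|A^{-1}\| = 1$. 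For (b) $\Rightarrow$ (a), the condition $\|A^{-1}\| = 1$ immediately gives $A^{-1} \in \M_n(\Zp)$, and since $A$ is already in $\M_n(\Zp)$ by hypothesis, $A \in \GL_n(\Zp)$.

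\medskip

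\textbf{Plan for (a) $\Leftrightarrow$ (c).} Since $A \in \M_n(\Zp)$, its characteristic polynomial $\chi_A$ is monic with coefficients in $\Zp$. By the standard Newton-polygon argument, every root of a monic polynomial in $\Zp[t]$ lies in $\overline{\Zp}$; so the question reduces to whether these roots are units. Writing $\chi_A(t) = t^n + \cdots + c_0$, we have $c_0 = (-1)^n \det(A)$ and $c_0 = (-1)^n \prod_i \lambda_i$ where the $\lambda_i$ are the roots of $\chi_A$ (in $\overline{\Qp}$, counted with multiplicity). Thus $|\det(A)| = \prod_i |\lambda_i|$. For (a) $\Rightarrow$ (c), $A \in \GL_n(\Zp)$ means $\det(A) \in \Zp^\times$, so $\prod_i |\lambda_i| = 1$; since each $|\lambda_i| \leq 1$, this forces every $|\lambda_i| = 1$, i.e., $\lambda_i \in \overline{\Zp}^\times$. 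For (c) $\Rightarrow$ (a), if each $\lambda_i \in \overline{\Zp}^\times$ then $|\det(A)| = 1$, so $\det(A) \in \Zp^\times$, and hence $A^{-1} = \det(A)^{-1} \operatorname{adj}(A) \in \M_n(\Zp)$, giving $A \in \GL_n(\Zp)$.

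\medskip

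\textbf{Main obstacle.} There is no real obstacle: the content of the lemma is a direct consequence of (i) the ultrametric submultiplicativity of the max-norm and (ii) the fact that for a monic polynomial with coefficients in $\Zp$, the roots are automatically integral over $\Zp$, so the only remaining condition distinguishing $\GL_n(\Zp)$ from $\M_n(\Zp)$ is the unit-ness of the constant term of $\chi_A$. The one minor subtlety is to recall (or briefly justify) that the entrywise max-norm is submultiplicative; this follows either from the general fact that the max-norm agrees with the operator norm for $\|\cdot\|_\infty$ on $\Qp^n$, or from a direct computation using the strong triangle inequality on each entry of $AB$.
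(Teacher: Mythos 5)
Your proof is correct, but it takes a more self-contained route than the paper. For (a) $\Leftrightarrow$ (b) the paper simply invokes the identity $\norm{A} = \abs{\sigma_1(A)}$ relating the max-norm to the largest invariant factor (so that $\norm{A}=\norm{A^{-1}}=1$ says exactly that all invariant factors of $A$ are units), whereas you avoid the Smith normal form entirely and argue via $\norm{X}\leq 1 \iff X \in \M_n(\Zp)$ together with submultiplicativity applied to $I = AA^{-1}$; both are valid, and your version has the merit of needing nothing beyond the strong triangle inequality, while the paper's phrasing ties the lemma to the singular-value/invariant-factor language ($\sigma_*$, s.v.d.) used throughout the rest of the article. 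For (a) $\Leftrightarrow$ (c) the paper just cites Kedlaya (Theorem 4.3.8), while you supply the underlying argument: integrality of the roots of a monic polynomial over $\Zp$, plus $\abs{\det A} = \prod_i \abs{\lambda_i}$ with each $\abs{\lambda_i}\leq 1$, so unit determinant is equivalent to all roots being units, and $A^{-1} = \det(A)^{-1}\operatorname{adj}(A)$ closes the loop. This is exactly the standard proof behind the cited result, so nothing is missing; your write-up is essentially an expanded, reference-free version of the paper's proof.
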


\begin{proof}
	For (a) if and only if (b), it is direct consequence of the fact that $\norm{A}=\vert \sigma_1(A) \vert,$
	the first invariant factor.
	For (c), see \cite[Theorem 4.3.8]{Kedlaya2010differential}.
\end{proof}

\begin{proposition/definition}[$p$-adic $QR$-factorization] \label{cor: padic QR}
	Let $A \in \mbz_p^{n \times m}$ be a matrix. Then there exists a $Q \in \GL_n(\mbz_p)$ and an upper triangular matrix $R \in \mbz_p^{n \times m}$ such that $A = QR$. 
\end{proposition/definition}

\begin{proof}
	See \cite[Chapter 4]{Kedlaya2010differential}, or note this follows from the Iwasawa decomposition of $\GL_n(\Qp)$.
\end{proof}

For a matrix $M \in \M_n(\Zp)$, the $QR$-factorization is generally not unique. For example, if $M := QR$ is a $QR$-decomposition, and $U \in \GL_n(\Zp)$ is an upper triangular matrix, We have that $(QU)(U^{-1}R)$ is also an upper triangular matrix. The following type of $QR$-decomposition is well suited to understand the kernel and rank of a matrix.

\begin{definition}
	We say that $M = QR$ is a \emph{strict} $QR$-factorization if for each $i \geq 2$, the first non-zero entry of the $i$-th row of $R$ is strictly to the right of the first non-zero entry of the $(i-1)$-th row. That is, $R$ is a matrix in echelon form.
\end{definition}

Over $\Zp$, a strict $QR$-decomposition for $M$ reveals the rank of $M$ as the number of non-zero pivots. Unfortunately, with insufficient precision not all strict $QR$-forms of a matrix reveal the rank in this way -- we discuss this further in Example~\ref{ex: QR rank needs precision}.

Let $M \in \M_n(\Zp)$ be a matrix. The \emph{Smith normal form} for $M$ is a diagonal matrix $\Sigma$ such that the diagonal elements $\sigma_1, \ldots, \sigma_n$ satisfy $\abs{\sigma_1} \geq \ldots \geq \abs{\sigma_n}$ and $M = U\Sigma V$ for some $U, V \in \GL_n(\Zp)$. The Smith normal form is the pnumerical analogue of the singular value decomposition from standard numerical linear algebra.

\begin{definition}
	Let $M \in \M_{m \times n}(\Zp)$ be a matrix and let $M = U\Sigma V$, with $\Sigma$ the Smith normal form and $U\in \GL_m(\Zp), V \in \GL_n(\Zp)$. The \emph{$p$-adic singular value decomposition} of $M$ is the decomposition $M = U\Sigma V$. The \emph{singular values} of $M$ are sizes of the diagonal entries of $\Sigma$.
\end{definition}

Since we are never concerned with matrices over the reals, we will simply use the terms ``$QR$-decomposition/factorization'' or ``singular value decomposition'' without the $p$-adic prefix in the sequel.

\begin{remark}
	With pivots chosen with respect to the $p$-adic norm, several standard algorithms also work for matrices over $\Qp$. Specifically:
	\begin{enumerate}[(a)]
		\item
		The standard algorithm to compute the Hessenberg form computes a Hessenberg form \cite{CRV2017characteristic}.
		
		\item
		The standard algorithm to compute a $PLU$-decomposition computes a $PLU$-decomposition. Moreover, $P^{-1}L \in \GL_n(\Zp)$, so this is a $p$-adic $QR$-decomposition \cite[Chapter~4]{Kedlaya2010differential}.
		
		\item
		One can modify the algorithm in $(b)$ to allow column pivoting, and then factor $U = \Sigma V$ with $\Sigma$ diagonal and $V \in \GL_n(\Zp)$ to compute a $p$-adic singular value decomposition. See the proof of \cite[Theorem~4.3.4]{Kedlaya2010differential} for further details.
	\end{enumerate}
\end{remark}

\begin{remark} \label{rem:QR_and_Hessenberg_still_Hessenberg_plus_arithmetic_cost}
    If $M$ is a Hessenberg matrix, we can restrict the permutations used in the standard $PLU$-factorization algorithm to compute a $QR$-factorization $M=QR$ such that $Q$ is a Hessenberg matrix. Then $M_{\text{next}}:=RQ$ is the product of a Hessenberg matrix with an upper-triangular matrix, so is also a Hessenberg matrix.
    Moreover, at worse $2n$ row operations ($n$ row eliminations plus $n$ row transpositions) are needed to compute $Q$ and $R$ from $M$.
    Computing $M_{\text{next}}=RQ$ can then be done in $2n$ columns operations.
    Since row/column permutations do not require arithmetic operations (only memory allocations or pointer reassignment, depending on the implementation), the cost of one $QR$-round applied to a Hessenberg matrix
    is bounded by $n^2$ arithmetic operations. If we also compute an update $V \mapsto Q^{-1}V$ to a transformation matrix, the total cost is $2n^2$ arithmetic operations.
\end{remark}

We now come to the discussion of $p$-adic precision. There are many ways to represent a $p$-adic element  $a \in \Qp$ in a computer system \cite{precision_book}. We represent an element of $\Qp$ by a truncated series
\[
a = a_{-r}p^{-r} + \ldots + a_0 + pa_1 + a_2p^2  + \ldots + a_{N-1}p^{N-1} + O(p^N)
\]
where the $O(p^N)$ is the $p$-adic ball representing the uncertainty of the remaining digits. The \emph{relative precision} of $a$ is the quantity $N + r$, and the \emph{absolute precision} is the number $N$. In the terminology of \cite{precision_book}, we consider a system with the \emph{zealous} (i.e, \emph{interval}) implementation of arithmetic. The operations $-, +$ preserve the minimum of the absolute precision of the operands, and $\times, \div$ preserve the minimum relative precision of the operands. If $u \in \mbz_p^\times$, $a \in \mbz_p$, and $N \leq N'$, then we have that $(u+O(p^{N'}))(a + O(p^{N})) = ua + O(p^N)$. Multiplication by $p$ preserves the relative precision and increases the absolute precision by $1$. The worst operation when it comes to absolute $p$-adic precision is dividing a small number by $p$. For example, the expression
\[
\frac{(1 + p^{99} + O(p^{100})) - (1 + O(p^{100}))}{p^{100} + O(p^{200})} = p^{-1} + O(1)
\]
begins with $3$ numbers with an absolute and relative precision of at least $100$, and ends with a result where not even the constant term is known. Henceforth, by \emph{precision} we refer to the absolute precision.

\begin{definition} \label{def: matrix oh-notation}
	Let $A,B \in \M_n(\Zp)$ be matrices such that $a_{i,j} = b_{i,j} + O(p^{N_{i,j}})$. Then we write $A = B + O(p^N)$, where $N := \min_{i,j} N_{i,j}$.
\end{definition}
To refer to a matrix $A \in \M_n(\Qp)$ whose elements are known at an absolute precision at least $N$, we will simply write $A + O(p^N)$. 
The same absolute precision on every entry
is called a \textit{flat} precision.

\subsection{Orthogonality and the Bilinear Lemma}


In pnumerical linear algebra, we often need to bridge the gap between an approximate computation -- usually, where arithmetic is performed in the ring $\Zp/p^N\Zp$ -- and some information about the true solution to our problem over $\Zp$. For example, consider computing the kernel of the following matrix equation
\[
	Mx = 
	\bbm
		p^3 & 0 \\ 0 & 0  
	\ebm
	x
	= 0.
\]
Over $\Zp$, we see that this matrix plainly has rank $1$, and our kernel is given by $e_2$. However, the kernel of $M \otimes_{\Zp} \Zp/p^N\Zp$ will always be rank $2$ as a $\Zp/p^N\Zp$-module. Thus, it is helpful to understand the properties of $\ker_{\Zp} M \otimes_{\Zp} \Zp/p^N\Zp$ to best make sense of the approximate computations. This leads us to the concept of $p$-adic orthogonality as introduced in \cite{schikhof2006ultrametric}.

\begin{definition}
	A set $\{x_1, \ldots, x_r\} \subset \Qp^n$ is \emph{orthogonal} if for every $\lambda_1, \ldots, \lambda_r \in \Qp$ we have that
	\[
	\norm{\sum_{j=1}^r \lambda_j x_j} = \max\left\{\abs{\lambda_j} \norm{x_j} : 1 \leq j \leq r\right\}.
	\]
	We say $\{x_1, \ldots, x_r\}$ is \emph{orthonormal} if it is orthogonal and each $\norm{x_j} = 1$.
\end{definition}

\begin{definition}
	A submodule $V \subseteq \Zp^n$ is \emph{orthonormally generated} if it is generated by an orthonormal set. We also say that $V$ \emph{admits an orthogonal basis}.
\end{definition}

Note that a subset $\{x_1, \ldots, x_r\} \subset \Zp^n$ is orthonormal if and only if $r \leq n$ and there is a $U \in \GL_n(\Zp)$ such that $x_j = e_j U$ for all $1 \leq j \leq r$. Since any two bases of a free $\Zp$-module are related by a transformation in $\GL_n(\Zp)$, we obtain the following basis-free characterizations of the orthonormally generated criterion.

\begin{lemma}
	Let $V$ be a free $\Zp$-submodule of $\Zp^n$. 
	\begin{enumerate}[(a)]
		\item
			If $V$ admits an orthonormal basis, then every basis of $V$ is orthonormal. 
		\item
			We have that $V$ is orthonormally generated if and only if the cokernel of the inclusion $V \hookrightarrow \Zp^n$ is a free $\Zp$-module.
	\end{enumerate}
\end{lemma}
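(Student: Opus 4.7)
For part (a), the plan is to reduce everything to the basis-extension characterization stated just before the lemma. Suppose $V$ has an orthonormal basis $v_1,\dots,v_r$, so that $v_j=e_j U$ for some $U\in\GL_n(\Zp)$. Given any other basis $w_1,\dots,w_r$ of $V$, there is a transition matrix $A\in\GL_r(\Zp)$ expressing the $w_j$ in terms of the $v_i$. I would embed $A$ into the $n\times n$ block-diagonal matrix $\widetilde A=\bigl[\begin{smallmatrix} A & 0 \\ 0 & I_{n-r}\end{smallmatrix}\bigr]\in\GL_n(\Zp)$ so that $w_j=e_j\widetilde A U$. Since $\widetilde A U\in\GL_n(\Zp)$, the criterion recalled before the lemma directly yields that $\{w_j\}$ is orthonormal.

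For the forward direction of part (b), if $V$ is generated by an orthonormal basis $v_j=e_j U$, then right-multiplication by $U^{-1}$ is a $\Zp$-linear isomorphism of $\Zp^n$ sending $V$ onto the span of $e_1,\dots,e_r$. Hence the cokernel is isomorphic to $\Zp^n/\langle e_1,\dots,e_r\rangle \cong \Zp^{n-r}$, which is free.

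For the converse, I plan to invoke the Smith normal form. Pick any basis of $V$ and let $M$ be the corresponding $r\times n$ matrix whose rows generate $V$. Write $M = P\,\Sigma\, Q$ with $P\in\GL_r(\Zp)$, $Q\in\GL_n(\Zp)$, and $\Sigma=[\,\diag(\sigma_1,\dots,\sigma_r)\mid 0\,]$. Then $V$ has the alternative basis $\sigma_j (e_j Q)$ for $j=1,\dots,r$, while $e_1 Q,\dots,e_n Q$ is a $\Zp$-basis of $\Zp^n$. In these coordinates the cokernel becomes
\[
\bigoplus_{j=1}^{r} \Zp/\sigma_j \Zp \;\oplus\; \Zp^{n-r}.
\]
Because $V$ is free of rank $r$, no $\sigma_j$ is zero, so this cokernel is free if and only if every $\sigma_j$ is a unit. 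In that case each $\sigma_j$ can be absorbed into $P$, so $V$ is generated by $e_1 Q,\dots,e_r Q$, which is an orthonormal set by the characterization already established.

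The main (minor) obstacle will be bookkeeping the row-vs-column conventions already in use in the excerpt (vectors of the form $e_j U$ rather than $U e_j$), and making sure the Smith-form step is stated so that the freeness of the cokernel translates cleanly into unit diagonal entries; once these conventions are pinned down, both parts reduce to essentially one-line applications of the pre-stated orthonormality criterion.
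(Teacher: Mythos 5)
Your proof is correct and follows the route the paper intends: the lemma appears there without a separate proof, as an immediate consequence of the preceding characterization of orthonormal sets as those of the form $x_j = e_j U$ with $U \in \GL_n(\Zp)$ together with the fact that any two bases of a free module differ by an invertible change of basis, which is exactly your argument for (a) and for the forward direction of (b). Your Smith normal form computation fills in the only direction left implicit (cokernel free $\Rightarrow$ orthonormally generated) and does so in the same spirit as the paper's own later use of the Smith form (the $p$-adic singular value decomposition) to produce orthonormal generators and orthogonal complements.
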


We additionally have a notion of orthogonal complement.

\begin{definition}
	Two submodules $U,V \subseteq \Zp^n$ are orthogonal if for some choice of bases $\{ u_i, i \in I \}$, $\{ v_j, {j \in J} \}$ the set $\{ u_i, i \in I \} \cup \{ v_j, j \in J \}$ is orthogonal. If $V$ and $U$ are both orthonormally generated and $\Zp^n = V \oplus U$, we say that $U$ is \emph{an orthogonal complement} to $V$ (and \textit{vice-versa}).
\end{definition}

Given a submodule $V \subseteq \Zp^n$ that is orthonormally generated, it is easy to construct an orthogonal complement. Writing a basis for $V$ as the rows of an $r \times n$ matrix $M$, we compute a singular value decomposition $M = Q \Sigma P$. Note that $\Sigma$ has unit entries on the diagonal, as $V$ is orthonormally generated, and that the first $r$ rows of $P$ generate $V$ as a submodule. Since $P \in \GL_n(\Zp)$, we see that the last $n-r$ rows of $P$ generate an orthonormal module orthogonal to $V$. That being said, the orthogonal complement of a non-trivial subspace is never unique.

\bigskip
A useful result to relate the results of our computations back to results over $\Zp$ is the Bilinear Lemma of Samuel-Zariski \cite[Chapter VIII, Section~7]{ZariskiSamuel}.

\begin{lemma}[Bilinear Lemma]
	Let $A$ be a ring, $\mathfrak{m}$ an ideal in $A$, and let $E,E',F$ be three $A$-modules. Assume that $F$ is a Hausdorff space for its $\mathfrak{m}$-topology and that $A$ is complete. Let $f\colon E \times E' \rightarrow F$ be a bilinear mapping, and denote by $\bar f\colon E/\mathfrak{m}E \times E'/\mathfrak{m}E' \rightarrow F / \mathfrak{m}F$ the canonically determined map. 
	
	If we are given $y \in F, \bar \alpha \in E/\mathfrak{m}E, \bar \alpha' \in E'/\mathfrak{m}E'$ such that $\bar f(\alpha, \alpha') = \bar y$ and $F/\mathfrak{m}F = f(\bar \alpha, E'/\mathfrak{m}E') + f(E/\mathfrak{m}E, \bar \alpha')$. Then there are lifts of $\alpha, \alpha'$ to $E,E'$ such that $y = f(\alpha, \alpha')$.
\end{lemma}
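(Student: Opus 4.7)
The plan is a Hensel-style successive approximation: construct sequences $\alpha_n \in E$ and $\alpha'_n \in E'$ lifting $\bar\alpha$ and $\bar\alpha'$ so that $\delta_n := y - f(\alpha_n, \alpha'_n) \in \mathfrak{m}^{n+1}F$ while the increments $\alpha_{n+1} - \alpha_n$ and $\alpha'_{n+1} - \alpha'_n$ lie in $\mathfrak{m}^{n+1}E$ and $\mathfrak{m}^{n+1}E'$ respectively, and then pass to the limit.

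First I pick any initial lifts $\alpha_0 \in E$ and $\alpha'_0 \in E'$; by the mod-$\mathfrak{m}$ hypothesis $\delta_0 \in \mathfrak{m}F$. For the inductive step, assume $\alpha_n, \alpha'_n$ have been constructed with $\alpha_n - \alpha_0 \in \mathfrak{m}E$, $\alpha'_n - \alpha'_0 \in \mathfrak{m}E'$, and $\delta_n \in \mathfrak{m}^{n+1}F$. Write $\delta_n = \sum_i a_i z_i$ as a finite $A$-linear combination with $a_i \in \mathfrak{m}^{n+1}$ and $z_i \in F$. The surjectivity hypothesis lifts to $F = f(\alpha_0, E') + f(E, \alpha'_0) + \mathfrak{m}F$, so each $z_i = f(\alpha_0, \beta'_i) + f(\beta_i, \alpha'_0) + r_i$ with $r_i \in \mathfrak{m}F$. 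Setting $u_n := \sum_i a_i \beta_i \in \mathfrak{m}^{n+1}E$ and $u'_n := \sum_i a_i \beta'_i \in \mathfrak{m}^{n+1}E'$, I define $\alpha_{n+1} := \alpha_n + u_n$ and $\alpha'_{n+1} := \alpha'_n + u'_n$. Expanding $f(\alpha_n + u_n, \alpha'_n + u'_n) = f(\alpha_n, \alpha'_n) + f(u_n, \alpha'_n) + f(\alpha_n, u'_n) + f(u_n, u'_n)$, and splitting $f(u_n, \alpha'_n) = f(u_n, \alpha'_0) + f(u_n, \alpha'_n - \alpha'_0)$ and similarly for $f(\alpha_n, u'_n)$, one sees that $f(u_n, \alpha'_0) + f(\alpha_0, u'_n)$ cancels $\delta_n$ exactly up to the remainder $\sum_i a_i r_i \in \mathfrak{m}^{n+2}F$, the cross-perturbations involving $\alpha_n - \alpha_0$ or $\alpha'_n - \alpha'_0$ (which are in $\mathfrak{m}^{n+2}F$), and $f(u_n, u'_n) \in \mathfrak{m}^{2(n+1)}F$; hence $\delta_{n+1} \in \mathfrak{m}^{n+2}F$.

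By construction $(\alpha_n)$ and $(\alpha'_n)$ are Cauchy in the $\mathfrak{m}$-adic topology, so they converge to lifts $\alpha \in E$, $\alpha' \in E'$ provided $E$ and $E'$ are $\mathfrak{m}$-adically complete — the implicit setting, and automatic in the paper's applications where $E, E'$ are finitely generated over the complete ring $A = \Zp$. The error then satisfies $y - f(\alpha, \alpha') \in \bigcap_{n \geq 0} \mathfrak{m}^n F = 0$ by the Hausdorff hypothesis on $F$, giving $y = f(\alpha, \alpha')$ as required.

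The main delicacy is the inductive bookkeeping: one must gain a full factor of $\mathfrak{m}$ at each stage despite the bilinear cross-term $f(u_n, u'_n)$ and the discrepancy between $\alpha_n$ and the fixed lift $\alpha_0$ used in the surjectivity decomposition. Invoking the decomposition always at the base point $(\alpha_0, \alpha'_0)$ — rather than at $(\alpha_n, \alpha'_n)$ — is the simplifying trick; all error contributions then automatically fall into $\mathfrak{m}^{n+2}F$, avoiding any genuine Newton-style quadratic iteration.
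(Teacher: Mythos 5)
Your proof is correct: the induction is sound (the three error terms $\sum_i a_i r_i$, the cross-perturbations against $\alpha_n-\alpha_0$, $\alpha'_n-\alpha'_0$, and $f(u_n,u'_n)$ all land in $\mathfrak{m}^{n+2}F$ exactly as you claim), and the passage to the limit plus the Hausdorff hypothesis finishes it. Note that the paper does not prove this lemma at all --- it is quoted from Zariski--Samuel (Vol.~II, Ch.~VIII, \S 7) --- and your argument is essentially the classical successive-approximation proof given there, so you have reconstructed the cited proof rather than found a new route. Your caveat about needing $E$ and $E'$ to be $\mathfrak{m}$-adically complete is not a cosmetic remark but a genuine correction to the statement as reproduced in the paper: with only ``$A$ complete and $F$ Hausdorff'' the conclusion can fail (e.g.\ $A=\Zp$, $\mathfrak{m}=p\Zp$, $E=\bigoplus_{i\geq 1}\Zp$, $E'=\Zp$, $F$ the $p$-adic completion of $E$, $f(x,a)=ax$, $y=(p,p^2,p^3,\dots)$, $\bar\alpha=0$, $\bar\alpha'=1$: the hypotheses hold but $y$ is not a scalar multiple of a finitely supported vector). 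In Zariski--Samuel the completeness hypothesis sits on $E$ and $E'$ (with $F$ Hausdorff), which is exactly what your limit step uses; and in the paper's specialized version all modules are finite free over $\Zp$, so the issue is harmless there. Two small points you glossed over but which are immediate: $\mathfrak{m}^k E$ is closed in the $\mathfrak{m}$-adic topology of $E$, so the limit $\alpha$ really is congruent to $\alpha_0$ modulo $\mathfrak{m}E$ (hence a lift of $\bar\alpha$), and the estimate $y-f(\alpha,\alpha')\in\mathfrak{m}^{n+1}F$ for all $n$ follows from $\alpha-\alpha_n\in\mathfrak{m}^{n+1}E$, $\alpha'-\alpha'_n\in\mathfrak{m}^{n+1}E'$ by bilinearity.
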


We can translate this directly to our situation.

\begin{lemma}[Bilinear Lemma, specialized]
	Let $\mathfrak{m}$ an ideal in $\Zp$. If we are given $y \in \Zp^n, \bar x \in (\Zp/\mathfrak{m}\Zp)^n, \bar M \in \M_n(\Zp/\mathfrak{m}\Zp)$ such that $\bar x$ has a unit coordinate and $\bar x \bar M = \bar y$. Then there is a lift $x \in \Zp^n$ of $\bar x$ and a lift $M \in \M_n(\Zp)$ of $\bar M$ such that $xM = y$.
\end{lemma}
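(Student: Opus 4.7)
The plan is to recognize this statement as a direct specialization of the general Bilinear Lemma cited from Zariski--Samuel. I take $A := \Zp$, $E := \Zp^n$ (viewed as row vectors), $E' := \M_n(\Zp)$, and $F := \Zp^n$, with the bilinear pairing $f \colon E \times E' \to F$ given by $f(x,M) := xM$. The reductions modulo $\mathfrak{m}$ then give $\bar f(\bar x, \bar M) = \bar x \bar M = \bar y$, which is exactly the compatibility the lemma asks for.

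The routine hypotheses are immediate: $A = \Zp$ is complete, and $F = \Zp^n$ is Hausdorff for its $\mathfrak{m}$-adic topology, since every proper ideal of $\Zp$ is contained in $p\Zp$ and $\bigcap_k p^k\Zp^n = 0$. The only step that requires an argument is the surjectivity condition
\[
F/\mathfrak{m}F \;=\; f(\bar x, E'/\mathfrak{m}E') + f(E/\mathfrak{m}E, \bar M).
\]
I plan to show that the first summand already exhausts the right-hand side, which is where the unit-coordinate hypothesis enters. Let $i$ be an index such that $\bar x_i$ is a unit in $\Zp/\mathfrak{m}\Zp$. For any target $\bar v \in (\Zp/\mathfrak{m}\Zp)^n$, define $N \in \M_n(\Zp/\mathfrak{m}\Zp)$ to be the matrix whose $i$-th row equals $\bar x_i^{-1}\bar v$ and whose remaining rows vanish; then $\bar x N = \bar x_i (\bar x_i^{-1} \bar v) = \bar v$, so $f(\bar x, \cdot)$ is already surjective onto $F/\mathfrak{m}F$.

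Once the hypothesis is verified, the Bilinear Lemma produces lifts $x \in \Zp^n$ of $\bar x$ and $M \in \M_n(\Zp)$ of $\bar M$ with $xM = y$, which is exactly the claim. I do not anticipate any real obstacle; the only thing to be mindful of is that the cited lemma requires a genuine lift of the target $y$ (not just its reduction), which matches our hypothesis $y \in \Zp^n$.
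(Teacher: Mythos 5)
Your proposal is correct and follows exactly the route the paper takes: the paper's proof simply asserts that ``the hypotheses of the general Bilinear Lemma are readily checked,'' and your write-up checks precisely those hypotheses (completeness of $\Zp$, Hausdorffness of $\Zp^n$, and the surjectivity condition via the unit coordinate of $\bar x$). The verification that $f(\bar x,\cdot)$ alone surjects onto $F/\mathfrak{m}F$ is the right key step and is carried out correctly.
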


\begin{proof}
	The hypotheses of the general Bilinear Lemma are readily checked.
\end{proof}

\noindent
Finally, we define the notion of orthogonality, orthonormal, and orthogonal complement for $(\Zp/p^N\Zp)^n$.

\begin{definition}
	Let $V$ be a submodule of $(\Zp/p^N\Zp)^n$. Then $V$ is \emph{orthonormally generated} if the cokernel of the inclusion $V \hookrightarrow (\Zp/p^N\Zp)^n$ is a free $(\Zp/p^N\Zp)$-module.
\end{definition}

\begin{definition}
	Two submodules $U,V \subseteq (\Zp/p^N\Zp)^n$ are orthogonal if for some choice of bases $\{ \bar u_i, i \in I \}$, $\{ \bar v_j, {j \in J} \}$ and any lifts $\{ u_i, i \in I \}$, $\{ v_j, {j \in J} \}$ to $\Zp^n$, the set $\{ u_i, i \in I \} \cup \{ v_j, j \in J \}$ is orthogonal. If $V$ and $U$ are both orthonormally generated and $(\Zp/p^N\Zp)^n = V \oplus U$, we say that $U$ is \emph{an orthogonal complement} to $V$ (and \textit{vice-versa}).
\end{definition}

\subsubsection{pNumerical ranks, kernels, and preimages} \label{sssec: pnumerical spaces}

In this section, we define the pnumerical rank, kernel, and inverse image. We also discuss how to compute such objects and how they relate to their exact counterparts for a matrix $M \in \M_n(\Zp)$. 

\begin{definition}
	The \emph{pnumerical rank of precision $O(p^{N})$} of $M \in \M_n(\Zp)$ is the number of singular values of $M$ of norm strictly bigger than $p^{-N}$ (\textit{i.e.} of valuation strictly smaller than $N$). 
\end{definition}

With a sufficient amount of precision, the pnumerical rank will be equal to the rank. Additionally, the strict $QR$-factorization will reveal the pnumerical rank of the original matrix as the number of non-zero pivots of $R$. If not enough precision is given, this cannot be guaranteed. The singular value decomposition always reveals the pnumerical rank.

\begin{example} \label{ex: QR rank needs precision}
	For the matrix
	\[
	M := \bbm
	p & 1 \\
	0 & p & 1 \\
	0 & 0 & p
	\ebm
	+ O(p^3)
	\]
	we see with $Q := 1, R := M$ that $M = QR$ is a strict $QR$-factorization. However, because of the low precision ($\vert \sigma_* (M) \vert \leq \vert p^3 \vert$), we can obtain another strict $QR$-factorization with
	\[
		Q' := \bbm
		1 & 0 & 0\\
		0 & 1 & 0 \\
		-p^2 & p & 1
		\ebm
		+ O(p^3)
		,
		\qquad
		R' := \bbm
		p & 1 \\
		0 & p & 1 \\
		0 & 0 & 0
		\ebm
		+ O(p^3).
	\]
	We see that the second strict $QR$-factorization reveals the pnumerical rank, and the first does not.
\end{example}

We now discuss pnumerical kernels and pnumerical inverse images.
\begin{definition}
	Let $N > 0$. The \emph{pnumerical kernel of precision $O(p^N)$} of $M \in \M_n(\Zp)$ is the maximal free $(\Zp/p^N\Zp)$-submodule of $(\Zp/p^N\Zp)^n$ annihilated by $M$. 
\end{definition}

\begin{definition} \label{defn:pnum_preimage}
	Let $N > 0$. The \emph{pnumerical preimage of precision $O(p^N)$} of a submodule $V \subseteq (\Zp/p^N\Zp)^n$ under $M \in \M_n(\Zp)$ is the the maximal free $(\Zp/p^N\Zp)$-submodule $U \subseteq (\Zp/p^N\Zp)^n$ such that $MU \subseteq V$. 
\end{definition}


The pnumerical kernel of $M$ is not generally the kernel of $M \pmod {p^N}$ as an endomorphism of $(\Zp/p^N\Zp)^n$. As expected, the pnumerical kernel is just the pnumerical preimage of $0$. Generally, if there is no risk of confusion we will forgo stating the ``of precision $O(p^N)$'' part of these terms.

\begin{lemma} \label{lem: dual module}
	Let $V$ be a submodule of $(\Zp/p^N\Zp)^n$. Then there exists a matrix $M \in \M_n(\Zp/p^N\Zp)$ such that the kernel of $M$ (as an endomorphism of $(\Zp/p^N\Zp)^n$) is exactly $V$. If $V$ is orthonormally generated of rank $b$, then $M$ has $n-b$ singular values of size $1$ and $b$ singular values of size $0$. 
\end{lemma}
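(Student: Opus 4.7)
The plan is to reduce $V$ to a diagonal form, construct a diagonal matrix $M'$ whose kernel matches, and transfer back through a change of coordinates. I will use the structure theorem for modules over the principal ideal ring $\Zp/p^N\Zp$: lifting a generating set of $V$ to $\Zp^n$ and applying Smith normal form, then reducing modulo $p^N$, I obtain a $U \in \GL_n(\Zp/p^N\Zp)$ and integers $0 \leq a_1, \ldots, a_n \leq N$ such that
$V = U \cdot \operatorname{span}(p^{a_1}e_1, \ldots, p^{a_n}e_n)$,
where any generator with $a_i = N$ is interpreted as zero.

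Next I set $M' := \operatorname{diag}(p^{N-a_1}, \ldots, p^{N-a_n})$. Since the condition $p^{N-a_i} x_i \equiv 0 \pmod{p^N}$ is equivalent to $p^{a_i} \mid x_i$, we have $\ker M' = \operatorname{span}(p^{a_1}e_1, \ldots, p^{a_n}e_n)$. Taking $M := M' U^{-1}$ then gives $\ker M = U \ker M' = V$, which proves the existence statement.

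For the refinement when $V$ is orthonormally generated of rank $b$: the short exact sequence $0 \to V \to (\Zp/p^N\Zp)^n \to (\Zp/p^N\Zp)^n/V \to 0$ splits because the cokernel is free, so $V$ is free of rank $b$ and a basis of $V$ extends to a basis of the ambient module. This lets me run the construction above with $a_i = 0$ for $i \leq b$ and $a_i = N$ for $i > b$, so that $M' = \operatorname{diag}(0,\ldots,0,1,\ldots,1)$ with $b$ zeros followed by $n-b$ ones. Because right-multiplication by $U^{-1} \in \GL_n(\Zp/p^N\Zp)$ preserves the Smith normal form, the singular values of $M = M' U^{-1}$ coincide with those of $M'$: $b$ of size $0$ and $n-b$ of size $1$.

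The only mildly non-routine point is justifying the Smith normal form decomposition over $\Zp/p^N\Zp$ rather than $\Zp$; this is handled either by lifting to $\Zp$, applying the standard SNF algorithm, and then reducing mod $p^N$, or by directly appealing to the classification of finitely generated modules over the PIR $\Zp/p^N\Zp$. Everything else is bookkeeping, so I do not anticipate any substantive obstacle.
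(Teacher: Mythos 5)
Your proof is correct and is essentially the paper's own argument: both reduce a generating matrix for $V$ to diagonal form (your lifted Smith normal form is the paper's $p$-adic singular value decomposition $X = Q\Sigma P$, with $\sigma_i = p^{a_i}$ up to units) and take $M$ to be $\diag(p^{N-a_1},\ldots,p^{N-a_n})$ composed with the inverse of the change of basis, so that $\ker M = U\ker M' = V$. The orthonormally generated case is also handled the same way in substance — all $a_i = 0$ for $i \leq b$ (the paper phrases this as each $\sigma_j = 1$), giving $b$ singular values of size $0$ and $n-b$ of size $1$.
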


\begin{proof}
	Note that $V$ is a finitely generated $\Zp$-module, so by the structure theorem for modules over a PID we have that there is an isomorphism $\varphi\colon V \longrightarrow \bigoplus_{j} (\Zp/p^j\Zp)^{m_j}$ with all but finitely many $m_j \in \mathbb{N} \cup \{0\}$ equal to $0$. We let $B$ be the finite subset of $V$ obtained by pulling back a set of generators for the direct summands of $\bigoplus_{j} (\Zp/p^j\Zp)^{m_j}$ under $\varphi$. Denote $b := \#B$.
	
	Let $X \in \M_n(\Zp/p^N\Zp)$ be the $n \times b$ matrix whose columns are the elements of $B$, and let $X = Q\Sigma P$ be a singular value decomposition. In particular, we have that the bottom $(n-b) \times b$ block of $\Sigma$ is $0$. We lift the entries of $\Sigma$ to $\Zp$ and construct the $n \times n$ matrix $M$ by
	\[
		M := 
		\bbm
		p^{N}\sigma_1^{-1} \\
		 & \ddots \\
		 & & p^{N}\sigma_b^{-1} \\
		 & & & 1 \\
		 & & & & \ddots \\
		 & & & & & 1
		\ebm
		Q^{-1}
		\in \M_n(\Zp/p^N\Zp).
	\]
	We see $MX = 0$. Thus, the kernel of $M$ is exactly $V$, so this completes the first part of the lemma. If $V$ is orthonormally generated, then our $b$ is also the rank of $V$ and each of the $\sigma_j = 1$, so the second part follows.
\end{proof}

\begin{proposition} \label{prop: kernels are orthonormal}
	Let $N \in \mathbb{N}$ and let $M \in \M_n(\Zp)$ be a matrix given at flat precision $O(p^N)$ and let $V \subseteq \Zp^n$ be a $\Zp$-submodule. Then:
	\begin{enumerate}[(a)]
		\item
		The pnumerical kernel of $M$ is 
		orthonormally generated. Furthermore, the pnumerical kernel contains the image of $\ker M$ under reduction modulo $p^N$.
		
		\item
		If $V$ is orthonormally generated, the pnumerical preimage of $V$ is orthonormally generated. Furthermore, the pnumerical preimage of $V$ contains the image of the preimage of $V$ under reduction modulo $p^N$.

	\end{enumerate}
\end{proposition}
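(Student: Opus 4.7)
The plan is to prove both parts using the singular value decomposition (SVD), and to verify the containment assertions via the purity of the exact kernel and preimage in $\Zp^n$.

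For part (a), fix an SVD $M = Q \Sigma P$ with $Q, P \in \GL_n(\Zp)$ and $\Sigma = \diag(\sigma_1, \ldots, \sigma_n)$ arranged in decreasing norm. A vector $\bar x \in (\Zp/p^N\Zp)^n$ lies in $\ker(M \bmod p^N)$ exactly when $\bar y := P \bar x$ satisfies $\sigma_i \bar y_i \equiv 0 \pmod{p^N}$ for each $i$. This decomposes the kernel coordinatewise into cyclic summands: a full free summand $\Zp/p^N\Zp$ for each $i$ with $\abs{\sigma_i} \leq p^{-N}$, and a proper torsion summand for each $i$ with $\abs{\sigma_i} > p^{-N}$. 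Taking the direct sum of the free summands and pulling back along $P$ yields a maximal free submodule, generated by the last $n - r$ columns of $P^{-1}$, where $r$ is the pnumerical rank; these columns are part of a $\GL_n(\Zp)$ basis and so are orthonormal. For the containment, $\ker M \subseteq \Zp^n$ is pure --- if $pv \in \ker M$, torsion-freeness of $\Zp^n$ forces $Mv = 0$ --- hence admits an orthonormal $\Zp$-basis. Moreover, $\ker M$ is the $\Zp$-span of those $P^{-1} e_i$ with $\sigma_i = 0$, and such indices satisfy $\abs{\sigma_i} = 0 \leq p^{-N}$, so $\ker M$ already sits inside the $\Zp$-span of the last $n - r$ columns of $P^{-1}$; reducing modulo $p^N$ gives the desired containment.

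For part (b), reduce to part (a) by presenting the pnumerical preimage as a pnumerical kernel. Let $\bar V$ denote the reduction of $V$ modulo $p^N$; since $V$ is orthonormally generated of some rank $k$, so is $\bar V$. Extend an orthonormal basis of $V$ to one of $\Zp^n$, and collect the extending vectors as the last $n - k$ rows of a matrix $W \in \GL_n(\Zp)$. Then $M \bar x \in \bar V$ is equivalent to $M' \bar x \equiv 0 \pmod{p^N}$, where $M'$ is the $(n-k) \times n$ matrix comprising the last $n - k$ rows of $WM$. Consequently the pnumerical preimage of $\bar V$ equals a maximal free submodule of $\ker(M' \bmod p^N)$, and the SVD argument of part (a) --- which applies verbatim to the rectangular matrix $M'$ --- yields orthonormality. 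For the containment, $M^{-1}(V) \cap \Zp^n$ is pure in $\Zp^n$ --- if $pMx \in V$, purity of $V$ forces $Mx \in V$ --- so it admits an orthonormal basis, the reduction of which is a free submodule $U$ satisfying $MU \subseteq \bar V$ and hence lying inside the pnumerical preimage by the same reasoning as in part (a).

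The main subtlety is in part (b), namely the identification of the pnumerical preimage of $\bar V$ with a maximal free submodule of $\ker(M' \bmod p^N)$: this crucially uses that $\bar V$ is orthonormally generated so that the projection onto the quotient by $\bar V$ is represented by a matrix whose kernel is exactly $\bar V$. Once this identification is in hand, the rest of the argument is a direct application of the SVD analysis developed in part (a) together with the purity observation.
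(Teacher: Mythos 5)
Your part (a), and the containment statements, are fine and follow the same route as the paper, which reads everything off the singular value decomposition and dismisses the reduction statements as obvious. Your plan for part (b) is also the paper's plan: turn the condition $M\bar x \in \bar V$ into a kernel condition for a composite matrix and reuse part (a). The paper does this by invoking Lemma~\ref{lem: dual module} to produce a matrix $A$ with kernel exactly $\bar V$ and then taking the pnumerical kernel of $AM$.

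However, the explicit matrix you construct in part (b) does not do this job. You extend an orthonormal basis of $V$ to one of $\Zp^n$, put the extending vectors as the last $n-k$ rows of $W$, and claim that $M\bar x \in \bar V$ iff the last $n-k$ rows of $WM$ annihilate $\bar x$. This treats $p$-adic orthonormality as if it were orthonormality with respect to the standard bilinear form, i.e.\ as if the extending vectors cut out $V$ via dot products; but $p$-adic orthonormality is purely a statement about norms, and the span of the first $k$ basis vectors is in general not the annihilator of the last $n-k$ ones. Concretely, take $n=2$, $k=1$, $V=\gen{e_1+e_2}$ (orthonormal) extended by $e_2$: your test row is $e_2^T$, yet $e_2^T(e_1+e_2)=1\neq 0$, so the kernel of your test matrix is $\gen{e_1}\neq V$ and the claimed equivalence fails (also modulo $p^N$). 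The repair is immediate and is exactly what the paper's Lemma~\ref{lem: dual module} provides: letting $P\in\GL_n(\Zp)$ be the matrix whose \emph{columns} are the full orthonormal basis, with the first $k$ columns spanning $V$, membership $y\in V$ is detected by the vanishing of the last $n-k$ coordinates of $P^{-1}y$, so the correct $M'$ is the last $n-k$ rows of $P^{-1}M$ (the dual basis, not the basis itself). With that substitution your argument goes through and coincides with the paper's proof.
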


\begin{proof}
	Both parts can be deduced by using the $p$-adic singular value decomposition (i.e, the Smith normal form). For part (a), we see the result from the singular value decomposition for $M$. For part~$(b)$, we apply Lemma~\ref{lem: dual module} to find a matrix $A$ such that $\ker A = V$. In particular, the pnumerical inverse image of $V$ under $M$ is the pnumerical kernel of  $AM$, so we can compute it via the singular value decomposition as before. The statements regarding reduction modulo $p^N$ are obvious.
\end{proof}

Proposition~\ref{prop: kernels are orthonormal} is optimal in the following sense. If $M$ is known at flat precision $O(p^N)$ and $\bar V$ is the pnumerical kernel of $M$, then there exists some $M' \in \M_n(\Zp)$ and $V \subseteq \Zp^n$ such that $M'V = 0$,  $M = M' + O(p^N)$, and $\bar V = V \otimes \Zp/p^N\Zp$. This is an immediate consequence of the Bilinear Lemma. The analogous statement is true for the pnumerical inverse image.


\subsection{The basic $QR$-algorithm}

Algorithm~\ref{algo: simple QR} below is the simple $QR$-algorithm given in \cite{Kulkarni2019} (Algorithm~2.19 \emph{loc.$\!$ cit.}). This version suffers from a number of drawbacks: the algorithm only converges linearly and cannot decompose any block with eigenvalues that are the same modulo $p$. The core idea to improve the algorithm is the classic strategy of concurrently updating the approximation to the eigenvalue and the matrix.

\begin{algorithm}[h]
		\caption{\texttt{simple\_QR\_Iteration}($M$, $\chi_{A,p}$)}
		\label{algo: simple QR}
		\begin{algorithmic}[1]
			\REQUIRE
			\ \\
			$M + O(p^N)$, an $n \times n$-matrix $in \M_n(\mbz_p)$. \\
			$\chi_{M,p}$, the characteristic polynomial of $M \pmod p$. \\[1ex]

			\ENSURE
			A (block) triangular form $T$ for $M$, and a matrix $V$ such that $MV = VT + O(p^N)$ (\textit{i.e.} $V$ is a change of basis matrix between $T$ and $M$). \\[1ex]

			\STATE
			Set $\lambda_1, \ldots, \lambda_\ell$ to be the roots of $\chi_{M,p}$ in $\Fp$, lifted to $\Zp$.
			\STATE
			Set $m_1, \ldots, m_\ell$ to be the multiplicities of the roots of $\chi_{A,p}$.
					
			\STATE
			Compute $B,V$ such that $MV = VB$ and $B$ is in Hessenberg form.
			
			\FOR {$i= 1, \ldots, \ell$}
			\FOR {$j= 1, \ldots , m_iN$}
			\STATE
			Factor $(B-\lambda_i I) = QR$
			\STATE
			Set $B := RQ + \lambda_i I$
			\STATE
			Set $V := Q^{-1}V$						
			\ENDFOR
			\ENDFOR
			\RETURN $B, V$
		\end{algorithmic}
\end{algorithm}

We point out a useful lemma of Wilkinson from \cite{Wilkinson1965}, which helps in analysing the diagonal elements of the various upper triangular factors encountered in the iteration.

\begin{lemma}[Wilkinson] \label{lem: Wilkinson lemma}
	Let $M \in \M_n(\Qp)$ be a matrix, let $s \geq 1$ be an integer, and let $(Q^{(1)}, R^{(1)})$, $\ldots$, $(Q^{(s)}, R^{(s)})$ be the $QR$-pairs for $s$ $QR$-rounds. Let
	\[
	M^{(s)} := R^{(s-1)}Q^{(s-1)}, \quad \mathcal{Q}^{(s)} := Q^{(1)} \cdots Q^{(s)}, \quad \mathcal{R}^{(s)} := R^{(s)} \cdots R^{(1)}.
	\]
	Then $M^s = \mathcal{Q}^{(s)}\mathcal{R}^{(s)}$ and $M^{(s+1)} = {\mathcal{Q}^{(s)}}^{-1} M \mathcal{Q}^{(s)}$.
\end{lemma}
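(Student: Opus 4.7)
The plan is to proceed by induction on $s$, establishing both equalities simultaneously, since each step of the induction for the power factorization uses the similarity identity at the previous level.

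For the base case $s=1$, the definitions give $M^{(1)} = M$, so a $QR$-round writes $M = Q^{(1)}R^{(1)} = \mathcal{Q}^{(1)}\mathcal{R}^{(1)}$, and
\[
M^{(2)} = R^{(1)}Q^{(1)} = {Q^{(1)}}^{-1}\bigl(Q^{(1)}R^{(1)}\bigr)Q^{(1)} = {\mathcal{Q}^{(1)}}^{-1} M \mathcal{Q}^{(1)},
\]
which handles both claims at $s=1$.

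For the inductive step, assume both identities hold at stage $s$. I would first derive the similarity identity at stage $s+1$: the single-round computation gives $M^{(s+2)} = R^{(s+1)}Q^{(s+1)} = {Q^{(s+1)}}^{-1}M^{(s+1)}Q^{(s+1)}$, and substituting the inductive hypothesis $M^{(s+1)} = {\mathcal{Q}^{(s)}}^{-1}M\mathcal{Q}^{(s)}$ together with $\mathcal{Q}^{(s+1)} = \mathcal{Q}^{(s)}Q^{(s+1)}$ yields $M^{(s+2)} = {\mathcal{Q}^{(s+1)}}^{-1}M\mathcal{Q}^{(s+1)}$.

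For the power factorization, I would rewrite $M^{s+1} = M \cdot M^s = M\,\mathcal{Q}^{(s)}\mathcal{R}^{(s)}$ using induction, then use the similarity at stage $s$ in the equivalent form $M\mathcal{Q}^{(s)} = \mathcal{Q}^{(s)}M^{(s+1)}$ to get $M^{s+1} = \mathcal{Q}^{(s)}M^{(s+1)}\mathcal{R}^{(s)} = \mathcal{Q}^{(s)}Q^{(s+1)}R^{(s+1)}\mathcal{R}^{(s)} = \mathcal{Q}^{(s+1)}\mathcal{R}^{(s+1)}$. There is no real obstacle here; the only thing to be careful about is the indexing convention that $M^{(1)}=M$ and the order of the factors in $\mathcal{R}^{(s)} = R^{(s)} \cdots R^{(1)}$ (as opposed to $\mathcal{Q}^{(s)}$, which is read left-to-right), so that the associativity rearrangement goes through cleanly on both sides.
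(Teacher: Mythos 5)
Your proof is correct: the simultaneous induction, combining the single-round relation $M^{(s+2)} = {Q^{(s+1)}}^{-1}M^{(s+1)}Q^{(s+1)}$ with the rewriting $M\mathcal{Q}^{(s)} = \mathcal{Q}^{(s)}M^{(s+1)}$ to build up $M^{s+1} = \mathcal{Q}^{(s+1)}\mathcal{R}^{(s+1)}$, is exactly the standard argument, and the inverses you use exist because each $Q^{(j)}$ lies in $\GL_n(\Zp)$ by the definition of the $p$-adic $QR$-factorization. The paper itself gives no proof of this lemma (it is quoted from Wilkinson's 1965 article, where the same induction appears), so your argument matches the intended one and fills in the citation.
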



We quote from \cite[Section~5]{Wilkinson1965} a brief summary of Wilkinson's argument to show why $QR$-iteration converges, in a simple case. We refer to Wilkinson's original article for the other cases. Let $M \in \M_n(\Zp)$ and assume that $M = XDX^{-1} = XDY$ for $D$ a diagonal matrix with $\lambda_j := D\ind{j,j}$ and $\abs{\lambda_1} > \ldots > \abs{\lambda_n} > 0$ and let $X \in \GL_n(\Zp)$. We will further assume that $X = L_XR_X$ and $Y = L_YR_Y$ for some $L_X, L_Y \in \GL_n(\Zp)$ unit lower-triangular matrices and $R_X, R_Y \in \GL_n(\Zp)$ upper-triangular.

Letting $(L^{(1)}, R^{(1)})$, $\ldots$, $(L^{(s)}, R^{(s)})$ be the $QR$-pairs for $s$ $QR$-rounds (with ${M = L^{(1)}R^{(1)}}$), we have
\[
	M^s = XD^sY = X(D^sL_YD^{-s})(D^sR_Y).
\]
We write $D^sL_YD^{-s} = I + F_s$, and we have
\[
	F_s\ind{i,j} = \begin{cases}
		L_Y\ind{i,j} \cdot \left( \frac{\lambda_i}{\lambda_j} \right)^s & \text{if } i > j \\
		0 & \text{if } i \leq j.
	\end{cases}
\]
By the assumption on the norms of the $\lambda_j$'s we have that $\lim_{s \rightarrow \infty} F_s \rightarrow 0$. We have
\begin{align*}
	XD^sY &= L_XR_X(1+F_s)D^sR_Y \\
	&= L_X(I + R_XF_sR_X^{-1})R_X D^s R_Y.
\end{align*}
Since $F_s \rightarrow 0$ under the iteration, for some sufficiently large $s$ we have that the $QR$-factorization of $(I + R_XF_sR_X^{-1})$ is of the form $(I+L')(I + R')$ with $L',R'$ lower (resp. upper) triangular and tending to $0$. In particular, by Wilkinson's Lemma
\[
	\mathcal{L}^{(s)}\mathcal{R}^{(s)} = \underbrace{L_X(I + L')} \underbrace{(I + R')(R_XD^sR_Y)}.
\]
The left factor is lower triangular and the right factor is upper triangular, so by the uniqueness of $LR$-decompositions of non-singular matrices over a domain, we have that $\mathcal{L}^{(s)} = L_X(I + L')$. But now with $M^{(s)}$ the $s$-th iterate of $M$ under the $QR$-iteration we have by Wilkinson's Lemma
\begin{align*}
	M^{(s)} &= (\mathcal{L}^{(s)})^{-1}M(\mathcal{L}^{(s)})  \\
	&= (\mathcal{L}^{(s)})^{-1} XDX^{-1} (\mathcal{L}^{(s)}) \\
	&= (I + L')^{-1}L_X^{-1}L_X R_X D R_X^{-1}L_X^{-1}L_X(I + L') \\
	&= (I + L')^{-1} R_X D R_X^{-1} (I + L').
\end{align*}
Because $\lim_{s \rightarrow \infty} (I+L') = I$, we see the $M^{(s)}$ converge to the upper triangular matrix $R_X D R_X^{-1}$.

\subsection{Problematic examples} \label{sec: troublesome examples}

Before proceeding with the rest of the article, we include examples that highlight some of the technical difficulties we need to be aware of in our proofs. First, we review an example from \cite{Kulkarni2019}.

\begin{example} \label{ex: charpoly needs more precision}
	Consider the matrix
	\[
	A := 
	\begin{bmatrix}
	p^3 & p^2 \\ 0 & -p^3
	\end{bmatrix} + O(p^6).
	\]
	The characteristic polynomial computed using capped precision arithmetic is $\chi_A + O(p^6) = T^2 + O(p^6)$. There is a precision loss in computing the roots of $f$, and the absolute error on the roots of $f$ cannot be better than $O(p^3)$. However, it is possible to know the characteristic polynomial of $A$ at a higher precision; keeping track of extra digits of precision, we have
	\begin{align*}
	\chi_A &= (p^3 + O(p^6) - T)(-p^3 + O(p^6) - T) - (p^2+O(p^6))(0 + O(p^6)) \\
	&= T^2 - (p^3 - p^3 + O(p^6))T + (p^6 + O(p^{9})) - (O(p^8)) \\
	&= T^2 - (0 + O(p^6))T + (p^6 + O(p^8)).
	\end{align*}
	With the extra digits of precision on the last coefficient of $\chi_A$, we can compute the roots of $\chi_A$ with an absolute error of $O(p^4)$. In particular, even when the input has flat precision, there are cases where the characteristic polynomial needs to be known at higher precision to obtain the best accuracy on the eigenpairs.
\end{example}

Next, we discuss topologically nilpotent matrices.
\begin{definition}
	We say a matrix $M \in \M_n(\Zp)$ is \emph{topologically nilpotent} if $\lim_{j \rightarrow \infty} \norm{M^j} = 0$. 
\end{definition}

For example, any matrix of the form
\[
\bbm
0 & \ldots & & 0 \\
1 & \\
& \ddots \\
& & 1 & 0
\ebm
+ pX, \quad X \in \M_n(\Zp)
\]
is topologically nilpotent. Topologically nilpotent matrices generally exhibit the worst-case scenario for the computation of the characteristic polynomial or iterative eigenvector algorithms  \cite{CRV2017characteristic, Kulkarni2019}. Practically, either more precision or more iterations are required to compute the generalized eigenspaces in these cases. Topologically nilpotent matrices are a particular examples of matrices $M \in \M_n(\Qp)$ such that $\abs{\lambda_1 - \lambda_2} < 1$ for some eigenvalues $\lambda_1, \lambda_2$ of $M$. In the archimedean case, the distances between eigenvalues of an input matrix is well-known to be related to the condition number of the eigenvalue/eigenvector problem.

\begin{example}[Topologically nilpotent matrices]
	Topologically nilpotent blocks present a worst case scenario for the convergence of our $QR$ method. Consider the $(n+1) \times (n+1)$ matrix
	\[
	\bbm
	1 & \\
	p & 0 & \ldots & 0 & p \\
	& 1 & \\
	& & \ddots \\
	& & & 1 & 0
	\ebm
	+ O(p^{N})
	\]
	After $n-1$ rounds (resp. $n$) $QR$-rounds (with shift $0$), we end up with the matrix
	\[
	\bbm
	1 & \\
	p & 0 & \ldots & & 0 & 1 \\
	& p & \\
	& & 1 \\
	& & & \ddots \\
	& & & & 1 & 0
	\ebm
	+ O(p^{N}), 
	\qquad
	(resp.) \quad 
	\bbm
	1 & \\
	p^2 & 0 & \ldots & & 0 & p \\
	& 1 & \\
	& & 1 \\
	& & & \ddots \\
	& & & & 1 & 0
	\ebm
	+ O(p^N).
	\]
	We see that the convergence of the $(2,1)$-entry to zero is hampered by the chain of subdiagonal $1$'s. If $\lambda_1, \lambda_2$ are distinct small eigenvalues and $\gcd(p,n) = 1$, we have $\abs{\lambda_1 - \lambda_2} = p^{-\frac{1}{n}}$, so we do not meet the criterion for quadratic convergence. Second, this example suggests even in optimal cases why we may need $n\log_2 N$ iterations for the $(2,1)$ entry to converge to zero modulo $p^N$; essentially, we can only guarantee that the size of this entry decreases within $n$ iterations.
\end{example}

\begin{example}[Disordered eigenvalues] \label{ex: disordered eigenvalues}
	Consider the matrix
	\[
	\bbm
	1 & 0 & 1 & 0 & 0 \\
	1 & 1 & 3 & 0 & 0 \\
	& p & p & p & 1 \\
	& & p^{10} & 1 & 2 \\
	& &  & p & p
	\ebm
	+ O(p^{N}).
	\]
	It is not immediately clear what the change of coordinates is to ensure that the matrix remains in Hessenberg form and for the backward orbit of $0 \pmod p$ to correspond to the last two (row) vectors. The transformation to convert this matrix to a {\thingname } appears to be difficult to compute.
\end{example}

There are several ways in which a matrix in $\M_n(\Zp)$ can fail to be diagonalized by a $\GL_n(\Zp)$ transformation. The first is that the matrix is not semi-simple, and the second is that the characteristic polynomial of $M$ may contain non-trivial irreducible factors. There is a third obstruction to diagonalizability whenever the singular values differ from the sizes of the eigenvalues. 

\begin{example}[Non-$\GL_n(\Zp)$-diagonalizable matrices.] \label{ex: non GLn(Zp) diagonalizable}
	Consider the topologically nilpotent matrix
	\[
		M = \bbm p & 1 \\ 0 & 0 \ebm.
	\]
	We see that $M$ is in Schur form and that the eigenvalues are $\{0,p\}$. It is impossible to diagonalize $M$ over $\GL_n(\Zp)$, as $\GL_n(\Zp)$ conjugation preserves the singular values, which in this case are $\{0,1\}$. 
	
\end{example}

\section{Computing {\sizesorted } forms and generalized $0$-eigenspaces} \label{sec: gze}

In this section, we study the connection between {\sizesorted } forms of a matrix and approximations to the generalized $0$-eigenspace. We first present a refinement of the Hodge-Newton decomposition from \cite[Theorem~4.3.11]{Kedlaya2010differential}. To begin, we give a variant of a classical result.

\begin{lemma} \label{lem: triangularization lemma}
	Let $f \in \Zp[t]$ be a polynomial, let $M \in \M_n(\Zp)$, let $V := \lker f(M)$, and let $r := \rank V$. Then $V$ is orthonormally generated. Moreover, there exists a $U \in \GL_n(\Zp)$ such that $VU^{-1} = \gen{e_{n-r+1}, \ldots, e_n}$. In particular,
	\[
	UMU^{-1} = \bbm A & C \\ 0 & B \ebm.
	\]
\end{lemma}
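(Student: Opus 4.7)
\medskip
\noindent
\textbf{Proof plan.} The plan is to prove the three claims in order: first that $V$ is orthonormally generated, then that an appropriate $U$ realizing the change of basis exists, and finally that conjugation by this $U$ produces the displayed block form. The key point throughout is that $V$ is a saturated submodule of $\Zp^n$, i.e., that $\Zp^n/V$ is torsion-free.

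First I would verify that $\Zp^n / V$ is torsion-free. Suppose $v \in \Zp^n$ and $p^k v \in V$ for some $k \geq 0$. Then $p^k (v\,f(M)) = (p^k v)\,f(M) = 0$, and since $\Zp^n$ is torsion-free this forces $v\,f(M) = 0$, i.e., $v \in V$. Thus the cokernel of $V \hookrightarrow \Zp^n$ is a finitely generated torsion-free $\Zp$-module, hence free, so by the characterization given in Section~\ref{sec: background} we conclude that $V$ is orthonormally generated.

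Next I would construct $U$. Choose an orthonormal basis $w_1,\ldots,w_r$ of $V$. Using the construction described after the definition of orthogonal complement (applied to the row-matrix whose rows are the $w_i$'s and computing a singular value decomposition), this orthonormal basis extends to an orthonormal basis $w_1',\ldots,w_{n-r}',w_1,\ldots,w_r$ of $\Zp^n$. Define $U \in \GL_n(\Zp)$ to be the matrix whose $i$-th row is $w_i'$ for $1 \leq i \leq n-r$ and whose $(n-r+j)$-th row is $w_j$ for $1 \leq j \leq r$. Then $e_{n-r+j}\,U = w_j$ for each $j$, so $\gen{e_{n-r+1},\ldots,e_n}\,U = V$; equivalently $V\,U^{-1} = \gen{e_{n-r+1},\ldots,e_n}$, as required.

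Finally I would verify the block form. Since $M$ commutes with $f(M)$, for any $v \in V$ we have $(vM)\,f(M) = v\,f(M)\,M = 0$, so $VM \subseteq V$. Applying $U^{-1}$ on the right gives
\[
\gen{e_{n-r+1},\ldots,e_n}\,(UMU^{-1}) \;=\; (V U^{-1})(UMU^{-1}) \;=\; (VM)\,U^{-1} \;\subseteq\; V U^{-1} \;=\; \gen{e_{n-r+1},\ldots,e_n}.
\]
Writing $UMU^{-1} = \bbm A & C \\ D & B \ebm$ with $A$ of size $(n-r)\times(n-r)$ and $B$ of size $r\times r$, the previous inclusion forces the lower-left block $D$ to vanish, giving the claimed form. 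I do not anticipate a significant obstacle: the only subtlety is the saturation argument in the first step, which is what makes the $p$-adic lemma go through in the same way as the classical field-theoretic version.
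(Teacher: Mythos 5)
Your proposal is correct and follows essentially the same route as the paper's proof: establish that $V$ is orthonormally generated, extend an orthonormal basis of $V$ by an orthogonal complement to build $U \in \GL_n(\Zp)$ with $V U^{-1} = \gen{e_{n-r+1},\ldots,e_n}$, and use $VM \subseteq V$ (from $M$ commuting with $f(M)$) to kill the lower-left block. The only difference is cosmetic: you justify orthonormal generation directly via saturation of $V$ and the free-cokernel criterion, where the paper simply invokes that kernels are orthonormally generated.
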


\begin{proof}
	Since $V$ is a kernel, it is orthonormally generated and admits an orthogonal complement $V^\perp$. Representing a basis $b_1, \ldots, b_{n-r}$ for $V^\perp$ and a basis $b_{n-r+1}, \ldots, b_{n}$ for $V$ as row vectors we construct
	\[
	U := \bbm b_1^T & \cdots & b_n^T \ebm^T.
	\]
	By orthogonality we have $U \in \GL_n(\Zp)$ and by definition $U$ sends $\gen{e_{n-r+1}, \ldots, e_n}$ to $V$. Finally, $M$ commutes with $f(M)$, so $V$ is an invariant subspace for $M$. In particular, $VM \subseteq V$. By the definition of $U$ we have
	$
	UMU^{-1} = \bbm A & C \\ 0 & B \ebm
	$
	as required.
\end{proof}

The lemma above allows us to show that a factorization of $\chi_M$ indicates that $M$ can be put into a matching block triangular form by a $\GL_n(\Zp)$ transformation. We can now prove the first theorem from the introduction.

\begin{theorem} \label{thm: Schur forms via GLn(Zp) transforms}
	Let $M \in \M_n(\Zp)$ and let $\chi_M = f_1 \cdots f_r$ be a factorization in $\Zp[t]$ where the factors are pairwise coprime in $\Qp[t]$.  Then there exists a $U \in \GL_n(\Zp)$ such that $UMU^{-1}$ is block-triangular with $r$ blocks; the $j$-th block accounts for the eigenvalues $\lambda$ such that $f_j(\lambda) = 0$.
\end{theorem}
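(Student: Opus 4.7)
The plan is to prove this by induction on $r$, with the base case $r=1$ being trivial. For the inductive step, the key move is to apply Lemma~\ref{lem: triangularization lemma} (the triangularization lemma) with $f = f_r$ and identify the resulting lower-right block as having characteristic polynomial exactly $f_r$; the upper-left block then has characteristic polynomial $f_1 \cdots f_{r-1}$ and we can recurse.

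In detail, set $V := \lker f_r(M)$. Lemma~\ref{lem: triangularization lemma} produces a $U \in \GL_n(\Zp)$ with
\[
UMU^{-1} = \begin{bmatrix} A & C \\ 0 & B \end{bmatrix}
\]
where $B$ is square of size $\rank V$. I would first check that $\chi_B = f_r$. A short calculation shows the left kernel is equivariant under conjugation: $\lker f_r(UMU^{-1}) = (\lker f_r(M))\, U^{-1} = \gen{e_{n-\rank V +1}, \ldots, e_n}$. Since the $(2,1)$-block of $f_r(UMU^{-1})$ already vanishes, the containment of these standard basis vectors in the left kernel forces $f_r(B) = 0$. Hence the minimal polynomial of $B$ divides $f_r$, so $\chi_B$ divides some power $f_r^k$; combined with the divisibility $\chi_B \mid \chi_M = f_1 \cdots f_r$ and the coprimality of $f_r$ with $f_1 \cdots f_{r-1}$ in $\Qp[t]$, unique factorization in $\Qp[t]$ forces $\chi_B \mid f_r$.

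To conclude $\chi_B = f_r$, I need the degrees to match, i.e., $\rank V = \deg f_r$. The pairwise coprimality of the $f_j$ in $\Qp[t]$ lets me invoke the Chinese Remainder Theorem over $\Qp[t]$ applied to $\Qp[t]/\chi_M$: the module $\Qp^n$ splits as $\bigoplus_j \lker_{\Qp^n} f_j(M)$, with the $j$-th summand having dimension $\deg f_j$. Since $V = \lker_{\Qp^n} f_r(M) \cap \Zp^n$ and the latter is a free $\Zp$-module, its rank equals the $\Qp$-dimension $\deg f_r$. This gives $\deg \chi_B = \deg f_r$ and hence $\chi_B = f_r$, from which $\chi_A = f_1 \cdots f_{r-1}$ follows.

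Finally, $A \in \M_{n-\deg f_r}(\Zp)$ with characteristic polynomial $f_1 \cdots f_{r-1}$, still a product of pairwise coprime factors in $\Qp[t]$, so the induction hypothesis yields $U_A \in \GL_{n-\deg f_r}(\Zp)$ conjugating $A$ to the desired block-triangular form. Composing $U$ with $\mathrm{diag}(U_A, I_{\deg f_r}) \in \GL_n(\Zp)$ produces the required global transformation. The main obstacle, as I see it, is the degree/rank computation: pulling back the $\Qp[t]$-level CRT decomposition to a statement about the $\Zp$-rank of $V$ is the one place we genuinely use that coprimality holds over $\Qp[t]$ rather than merely over $\Zp[t]$, and it is this that ensures the block sizes land exactly at $\deg f_j$ rather than being mere upper bounds.
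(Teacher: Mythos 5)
Your proof is correct and takes essentially the same route as the paper: apply Lemma~\ref{lem: triangularization lemma} with the factor $f_r$, identify the lower-right block as having characteristic polynomial exactly $f_r$, and finish by induction on the number of factors. The paper simply asserts ``since the $f_j$ are pairwise coprime, we have $\chi_B = f_r$''; your CRT/rank computation (giving $\rank V = \deg f_r$ and hence equality rather than mere divisibility) is precisely the justification left implicit there.
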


\begin{proof}
	Using Lemma~\ref{lem: triangularization lemma} with the polynomial $f_r$, we can find a $U \in \GL_n(\Zp)$ such that
	\[
	UMU^{-1} = \bbm A & C \\ 0 & B \ebm.
	\]
	Since the $f_j$ are pairwise coprime, we have $\chi_B = f_r$. The result follows from an inductive argument.
\end{proof}


Even though a $\GL_n(\Zp)$ transform can be found to put a matrix into a block Schur form, this does not mean a $\GL_n(\Zp)$ matrix can be found that block diagonalizes the matrix. See Example~\ref{ex: non GLn(Zp) diagonalizable}. If $f \in \Zp[t]$ is a polynomial whose roots have valuations $\{\nu_1, \ldots, \nu_R\}$, there is a factorization $f = f_1 \cdots f_r$ where the roots of each $f_j$ have valuation $\nu_j$
(see \cite[Section 2.2]{Kedlaya2010differential}, \cites{MontesProject, GuardiaNartPauli2012, CRV2016slopefactorization} for more details on the factorization of $p$-adic polynomials, slope
factorization and how to compute them).
Thus we obtain:

\begin{corollary}[Newton decomposition] \label{cor: Newton decomposition}
	Let $M \in \M_n(\Zp)$ and let $\nu_1 \leq \ldots \leq \nu_r$ be the distinct valuations of the eigenvalues of $M$. Then there exists a $U \in \GL_n(\Zp)$ such that $UMU^{-1}$ is block-triangular with $r$ diagonal blocks; the $j$-th block accounts exactly for the eigenvalues of valuation $\nu_j$.
\end{corollary}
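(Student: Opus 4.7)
The plan is to derive this as an immediate consequence of Theorem~\ref{thm: Schur forms via GLn(Zp) transforms}, by producing the appropriate factorization of $\chi_M$ in $\Zp[t]$ and then feeding it into the theorem. Since $M \in \M_n(\Zp)$, we automatically have $\chi_M \in \Zp[t]$, so the question is whether $\chi_M$ admits a factorization whose factors sort the roots by valuation.

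First I would invoke the slope factorization (a.k.a.\ Newton / Hensel-type factorization) of $p$-adic polynomials. This is exactly the result quoted in the paragraph immediately preceding the corollary: a polynomial $f \in \Zp[t]$ whose roots in $\overline{\Qp}$ take the distinct valuations $\nu_1 < \ldots < \nu_r$ factors as $f = f_1 \cdots f_r$ in $\Zp[t]$, where each $f_j$ collects precisely the roots of valuation $\nu_j$. Applied to $f = \chi_M$, this yields the desired factorization.

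Next I would check that the factors $f_1, \ldots, f_r$ are pairwise coprime in $\Qp[t]$. This is automatic from the construction: since the multisets of roots of $f_i$ and $f_j$ in $\overline{\Qp}$ are disjoint (they have different valuations), $f_i$ and $f_j$ share no common root, and hence are coprime in $\overline{\Qp}[t]$, and therefore in $\Qp[t]$. Finally, I would apply Theorem~\ref{thm: Schur forms via GLn(Zp) transforms} to this factorization to produce a $U \in \GL_n(\Zp)$ such that $UMU^{-1}$ is block upper triangular with $r$ diagonal blocks, the $j$-th block having characteristic polynomial $f_j$ — and therefore having exactly the eigenvalues of valuation $\nu_j$.

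There is essentially no main obstacle here: both ingredients (slope factorization of $\chi_M$ and the block triangularization theorem) have already been established or cited in the paper. The only minor care needed is to confirm that the factors produced by slope factorization actually lie in $\Zp[t]$ (not merely $\Qp[t]$), which is a standard feature of the construction for $f \in \Zp[t]$ with integral roots in $\overline{\Qp}$; this is what justifies directly plugging them into the hypothesis of Theorem~\ref{thm: Schur forms via GLn(Zp) transforms}.
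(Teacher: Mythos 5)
Your proposal is correct and matches the paper's own route: the paper derives the corollary directly from the slope factorization of $\chi_M$ into factors sorted by valuation (cited just before the corollary) together with Theorem~\ref{thm: Schur forms via GLn(Zp) transforms}. Your added checks (integrality of the factors and pairwise coprimality) are exactly the implicit details the paper relies on.
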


To compute a sorted matrix, we can use the standard algorithm to compute a block Schur form for a matrix over $\mathbb{F}_p$. We state this as Algorithm~\ref{algo: sorted form algorithm}. Note that a {\sizesorted } form is a $1$-digit of precision approximation to the Newton decomposition from Corollary~\ref{cor: Newton decomposition}.

\begin{algorithm}[h]
	\caption{\texttt{{sorted\_form}}}
	\label{algo: sorted form algorithm}
	\begin{algorithmic}[1]
		\REQUIRE
		\ \\
		An $n\times n$ matrix $M$ known at precision $O(p^N)$. \\
		
		\ENSURE
		A sorted form $M'$ for $M$ and a matrix $U \in \GL_n(\Zp)$ such that $M' = UMU^{-1}$. \\[1ex]
		
		\STATE
		Set $\bar M := M \pmod p$.

		\STATE
		Compute a block Schur form for $\bar M$ with change of basis matrix $U \in \GL_n(\mathbb{F}_p)$
		\STATE
		Lift $U$ to $\GL_n(\Zp)$
		\STATE
		Set $M' := UMU^{-1}$
		\RETURN $M'$, $U$
	\end{algorithmic}
\end{algorithm}

This algorithm is sufficient for our purpose of computing the block Schur form. We also see that there is a connection between computing the generalized $0$-eigenspace at $1$ digit of precision and the computation of a {\sizesorted } form of a matrix. Consequently, sorted matrices necessarily have a non-trivial factorization of their characteristic polynomials. 

\begin{lemma} \label{lem: chi factors}
	Let $M := \bbm A & C \\ E & B \ebm$ be a {\sizesorted } matrix and let $\epsilon$ be a positive power of $p$ such that $E \equiv 0 \pmod \epsilon$. Then there is a factorization $\chi_M = \chi_{\bigeig} \chi_{\smeig}$ in $\Zp[t]$ such that $\chi_{\bigeig} \equiv \chi_A \pmod \epsilon$ and $\chi_{\smeig} \equiv \chi_B \pmod \epsilon$. Moreover, the factorization $\chi_M \equiv \chi_A \chi_B \pmod \epsilon$ into monic polynomials is unique in $(\Zp/\epsilon \Zp)[t]$.
\end{lemma}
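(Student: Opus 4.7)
The plan is to reduce the statement to Hensel's lemma, with the extra precision coming from the block structure modulo $\epsilon$. There are three ingredients to assemble: (i) the congruence $\chi_M \equiv \chi_A \chi_B \pmod{\epsilon}$, (ii) the coprimality of $\chi_A$ and $\chi_B$ modulo $p$, and (iii) a precision-sharpened form of Hensel's lemma for polynomials.

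First I would prove (i) by direct computation. Writing $tI - M$ as a $2 \times 2$ block matrix and using that $E \equiv 0 \pmod{\epsilon}$, we have
\[
tI - M \equiv \begin{bmatrix} tI - A & -C \\ 0 & tI - B \end{bmatrix} \pmod{\epsilon},
\]
and the determinant of a block upper-triangular matrix is the product of the diagonal blocks' determinants, giving $\chi_M \equiv \chi_A \chi_B \pmod{\epsilon}$. For (ii), I would note that the {\sizesorted } hypothesis says $\chi_B \equiv t^{n_B} \pmod p$, so $0$ is the only root of $\chi_B \bmod p$ in $\overline{\mathbb{F}_p}$, while $\chi_A(0) \not\equiv 0 \pmod p$ ensures $\chi_A$ does not vanish at $0$ mod $p$. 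Hence $\gcd(\chi_A, \chi_B) = 1$ in $\mathbb{F}_p[t]$.

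For (iii), I would invoke the following sharpened version of Hensel's lemma: given a monic $P \in \Zp[t]$ and monic $f,g \in \Zp[t]$ with $P \equiv fg \pmod{p^k}$ and $\bar f, \bar g$ coprime in $\mathbb{F}_p[t]$, there exist unique monic $F, G \in \Zp[t]$ satisfying $P = FG$, $F \equiv f \pmod{p^k}$, and $G \equiv g \pmod{p^k}$. This can either be quoted or proved quickly by the standard Newton-style iterative refinement: starting from $(F_0, G_0) := (f, g)$, write $P - F_i G_i = p^{k+i} h_i$ and use a mod-$p$ Bézout identity $u \bar g + v \bar f = 1$ to solve for corrections $F_{i+1} = F_i + p^{k+i} u_i$, $G_{i+1} = G_i + p^{k+i} v_i$ with $\deg u_i < \deg g$, $\deg v_i < \deg f$; the sequence converges $p$-adically to the required $F, G$, and the corrections preserve the congruence modulo $p^k$ at every stage. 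Applying this with $P = \chi_M$, $f = \chi_A$, $g = \chi_B$, $p^k = \epsilon$ produces $\chi_{\bigeig}, \chi_{\smeig}$ with the stated properties.

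For the uniqueness assertion, I would argue as follows. Any monic factorization $\chi_M \equiv f' g' \pmod{\epsilon}$ with $f' \equiv \chi_A \pmod p$ and $g' \equiv \chi_B \pmod p$ is an approximate factorization in the sense above, so the same sharpened Hensel argument applied to $(f', g')$ lifts it to a monic factorization $F'G' = \chi_M$ in $\Zp[t]$. By the uniqueness clause of the ordinary Hensel's lemma (valid because $\bar f', \bar g'$ are coprime in $\mathbb{F}_p[t]$), we must have $F' = \chi_{\bigeig}$ and $G' = \chi_{\smeig}$, so $f' \equiv \chi_{\bigeig} \equiv \chi_A \pmod{\epsilon}$ and similarly for $g'$. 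The only step I expect to require care is stating and justifying the precision-sharpened Hensel lemma cleanly, since the textbook form usually only guarantees agreement modulo $p$; everything else is a short determinant computation or a direct appeal.
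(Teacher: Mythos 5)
Your proposal is correct and follows essentially the same route as the paper: establish the block-triangular congruence $\chi_M \equiv \chi_A \chi_B \pmod{\epsilon}$, note the coprimality of the reductions modulo $p$ coming from the {\sizesorted} hypothesis, and apply Hensel's lemma in its precision-sensitive form to obtain both the factorization $\chi_M = \chi_{\bigeig}\chi_{\smeig}$ in $\Zp[t]$ and the uniqueness modulo $\epsilon$. The only nit is a swapped pair of degree bounds in your sketch of the sharpened Hensel iteration (the correction $u_i$ to $F_i$ should satisfy $\deg u_i < \deg f$ and the correction $v_i$ to $G_i$ should satisfy $\deg v_i < \deg g$ so that monicity is preserved), which is harmless since, as you note, the sharpened statement can simply be quoted.
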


\begin{proof}
	Note that $\chi_M \equiv \chi_A \chi_B \pmod \epsilon$, and in particular $\chi_M \equiv \chi_A \chi_B \pmod p$. Writing $\chi_{A,p}, \chi_{B,p}$ for the reductions of $\chi_A, \chi_B$ modulo $p$ (respectively), we have $\gcd(\chi_{A,p}, \chi_{B,p}) = 1 \in \Zp/p\Zp$. By Hensel's lemma, we have the factorization $\chi_M = \chi_{\bigeig} \cdot  \chi_{\smeig}$ in $\Zp[t]$, and moreover, if $\chi_M \equiv F G \pmod{\epsilon}$ is a factorization such that $F \equiv \chi_{\bigeig} \pmod p$ and $G \equiv \chi_{\smeig} \pmod{p}$, then $F \equiv \chi_{\bigeig} \pmod{\epsilon}$ and $G \equiv \chi_{\smeig} \pmod{\epsilon}$. Thus, we see that $\chi_{\smeig} \equiv \chi_B \pmod \epsilon$. 
\end{proof}

The remainder of this section is devoted to computing the generalized $0$-eigenspace of a matrix. This offers two possible benefits. First, it allows us to repair the classical algorithm to deal with cases such as Example~\ref{ex: insufficient precision causes GZE error}. Secondly, we can potentially set up the iterative algorithms to block-triangularize topologically nilpotent matrices.

\subsection{Computing the generalized $0$-eigenspace: Problematic examples}

In this section we give some examples that demonstrate the difficulty of computing the generalized $0$-eigenspace.

\begin{example} \label{ex: insufficient precision causes GZE error}
	Let $A := \bbm p^2 \\  & 0 & 1 \\ & 0 & 0 \ebm + O(p^4)$. The schoolbook method to compute the (right sided) generalized $0$-eigenspace is to compute $\ker A^2$. Unfortunately, we see that $A^2 \equiv 0 \pmod {p^4}$, and in this case we do not compute the generalized $0$-eigenspace correctly.	
\end{example}

\begin{example}
	In infinite precision, another way to compute the generalized $0$-eigenspace is to iteratively solve $Ax = b$, starting with $b=0$. The corresponding calculation in finite precision is a little delicate. Consider the matrix
	\[
	A := 
	\bbm
	p \\ & 0 & 0 & 0 \\ & 1 & 0 & 0 \\ & 0 & p & 0 
	\ebm
	\in \M_4(\Zp).
	\]
	We see that the right kernel of $A$ is generated by $e_4$, and that the generalized $0$-eigenspace is $\gen{e_2, e_3, e_4}$. Unfortunately, in this case, the solutions in $\Zp^4$ to $Ax = e_4$ are of the form $x = p^{-1} e_3 + u e_4$, where $u \in \Zp$. Working with $4$-digits of precision, the $\Zp/p^4\Zp$-submodule of elements such that $\bar Ax \in \gen{\bar e_4}$ is $\gen{p^3\bar e_1, \bar e_3, \bar e_4}$. This example indicates we need to be careful about what we mean by the ``backward orbit of $0 \pmod{p^N}$'' and motivates the definition of the \emph{pnumerical preimage of precision $O(p^N)$} in Definition~\ref{defn:pnum_preimage}.
\end{example}

\subsection{The generalized $0$-eigenspace algorithm}	

We give an algorithm (Algorithm~\ref{algo: gze}) to compute the generalized $0$-eigenspace of a matrix $M$ given at finite precision.

\begin{algorithm}[h]
	\caption{Generalized 0-eigenspace (abbreviated to \texttt{GZE})}
	\label{algo: gze}
	\begin{algorithmic}[1]
		\REQUIRE
		\ \\
		An $n\times n$ matrix $M$ known at precision $O(p^N)$. \\
		
		An s.v.d factorization $M = Q\Sigma P$.
		
		\ENSURE
		A matrix whose rows form a basis of a numerical approximation of the generalized left $0$-eigenspace of $M$. \\[1ex]
		
		
		\STATE
		Set $K :=\left[e_{r+1},\dots,e_n \right] \in \Zp^{(n-r) \times n}$ with $r$ the pnumerical rank of $M$ at precision $O(p^N)$
		to be a matrix representing the pnumerical left kernel of $\Sigma$
		
		\IF {$K = \gen{0}$} \label{step: gze: exit condition}
		\RETURN $\emptyset$
		\ENDIF
		
		\STATE \label{step: gze: kernel}
		Set $V := KQ^{-1}$. Set $\delta := n-r$
		
		\STATE \label{step: gze: square subblock select}
		Set $B$ to be a $\delta \times \delta$ square sub-block of $V$ such that $B \in \GL_\delta(\Zp)$
		
		\STATE \label{step: gze: rref}
		Set $W := B^{-1}V$
		
		\STATE
		Set $J$ to be the set of the  indices of the pivot columns in $W.$
		
		\STATE \label{step: gze: column eliminate}
		Eliminate columns of $M$ using pivots from $W$: call this $M'$ \\
		
		i.e, compute $X \in \M_n(\Zp)$ such that for any $j \in J$, we have $(M - XW)\ind{\bullet, j} = 0$ \\[1ex]
		
		\STATE \label{step: gze: quotient}
		Delete the columns and rows in $M'$ indexed by $J$: call this $M''$
		
		
		\STATE
		Compute $M'' = Q''\Sigma''P''$ an s.v.d. decomposition
		
		\STATE
		Set $V_{new} := \texttt{GZE}(M'' = Q''\Sigma''P'')$
		
		\STATE
		Set $\widetilde{V_{new}}$, obtained from $V_{new}$ as a matrix with $n$ columns, with those of index in $J$ being $0$
		
		\RETURN $\left[\frac{\widetilde{V_{new}}}{V}\right]$
		
	\end{algorithmic}
\end{algorithm}

The underlying reason that this algorithm computes the correct answer is that after the truncation in step~\ref{step: gze: quotient}, we have that $V$ is the kernel of $M$ and that we constructed the operator $\overline M \colon \Zp^n/V \rightarrow \Zp^n/V$ up to a change of basis. We then use the fact that $\texttt{GZE}(M) \cong \texttt{GZE}(\overline M) \oplus V$. More precisely, one chooses an orthogonal complement $V^\perp$ to $V$ inside $\Zp^n$ and computes an operator $M'$ such that $M'$ stabilizes $V^\perp$ and the image of $M - M'$ is contained in $V$. In this case, $\texttt{GZE}(M) = \texttt{GZE}(M'|_{V^\perp}^{V^\perp}) \oplus V$.
%
Note that an orthogonal complement to $V$ in $\Zp$ is given by $\gen{ e_i : i \not \in J}$. This is easily seen from the fact that the pivots of $W$ occur in the columns indexed by $J$.

\begin{lemma}
	The matrix $M''$ computed on step~\ref{step: gze: quotient} represents $\overline M \colon \Zp^n/V \rightarrow \Zp^n/V$.
\end{lemma}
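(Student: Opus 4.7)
The plan is to identify $M''$ as the matrix of $\overline M$ in a basis of $\Zp^n/V$ coming from a carefully chosen orthogonal complement. Let $V^\perp := \gen{e_i : i \notin J}$. First I would verify that $V^\perp$ is an orthogonal complement to $V$; then I would show that $M' = M - XW$ is a modification of $M$ that agrees with $M$ modulo $V$ and whose rows are all supported in $V^\perp$; finally, extracting the submatrix $M''$ yields the matrix of $\overline M$ in the basis $\{\bar e_i : i \notin J\}$ of $\Zp^n/V$.

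For the decomposition $\Zp^n = V \oplus V^\perp$, note that the rows of $W = B^{-1}V$ form a basis of $V$ (since $B \in \GL_\delta(\Zp)$) and that $W$ is in reduced row echelon form with pivots in the columns indexed by $J$. Given $v \in \Zp^n$, the coordinates $(v_j)_{j \in J}$ uniquely determine a $\Zp$-combination of rows of $W$ whose subtraction from $v$ produces a vector supported on $\{1, \ldots, n\} \setminus J$; this gives the direct sum. Since $V$ is the pnumerical left kernel of $M$ and hence orthonormally generated by Proposition~\ref{prop: kernels are orthonormal}, and $V^\perp$ is generated by standard basis vectors, the combined basis consisting of the rows of $W$ together with $\{e_i : i \notin J\}$ lies in $\GL_n(\Zp)$ by Lemma~\ref{lem: characterization of GLnZp}, which is exactly the orthogonality condition.

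Next I would analyze $M'$. The rows of $XW$ are $\Zp$-combinations of rows of $W$ and therefore lie in $V$, so $M$ and $M'$ induce the same map on $\Zp^n/V$. The defining property $(M - XW)\ind{\bullet, j} = 0$ for every $j \in J$ forces each row of $M'$ to be supported on $\{1, \ldots, n\} \setminus J$, i.e.\ to lie in $V^\perp$; consequently $M'\ind{i, \bullet}$ is precisely the $V^\perp$-component of $M\ind{i, \bullet}$ under the decomposition above. The induced map $\overline M$ then sends $\bar e_i \mapsto \overline{M\ind{i, \bullet}} = \overline{M'\ind{i, \bullet}}$ for $i \notin J$, and since $M'\ind{i,\bullet}$ is supported on $\{1, \ldots, n\} \setminus J$, its coordinates in the basis $\{\bar e_j : j \notin J\}$ read off directly as $(M'\ind{i, j})_{j \notin J}$. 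Restricting to $i \notin J$, this is exactly the submatrix $M''$ produced by step~\ref{step: gze: quotient}.

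The main subtlety will be keeping the row/column and left/right conventions aligned: we work with left actions on row vectors, so the orthogonal complement of $V$ must be compatible with both the column-side subtraction $M \mapsto M - XW$ and the row-and-column truncation producing $M''$. Both compatibilities fall out cleanly from the fact that $W$ is in reduced row echelon form with pivot set $J$, so no delicate argument about the precision $O(p^N)$ is needed beyond invoking the orthonormal-kernel statement of Proposition~\ref{prop: kernels are orthonormal}.
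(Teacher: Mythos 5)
Your proof is correct and takes essentially the same route as the paper: fix the complement $\gen{e_i : i \notin J}$, observe that $M' = M - XW$ agrees with $M$ modulo $V$ while its rows are supported off the columns in $J$, and read off the matrix of $\overline M$ in the basis $\{\bar e_i : i \notin J\}$ by deleting the rows and columns indexed by $J$. The additional verifications you spell out (that $\Zp^n = V \oplus \gen{e_i : i \notin J}$ via the pivot structure of $W$, and the orthonormality) are details the paper handles in the remark immediately preceding the lemma.
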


\begin{proof}
	First, we fix the basis $\gen{e_i : i \not \in J}$ for the choice of orthogonal complement. 
	Note that we have the equation $M' := M - XW$.
	The (left) image of $M'$ is contained in $\gen{e_i : i \not \in J}$. In particular, $M'$ defines an endomorphism of the subspace $\gen{e_i : i \not \in J}$. The explicit matrix describing this endomorphism on $\Zp^n/V$ with respect to the chosen basis is obtained from $M'$ by deleting the columns indexed by $J$. This is exactly the matrix $M''$.
\end{proof}

\begin{proposition}
	Let $M \in \M_n(\Zp)$ be a matrix given at flat precision $O(p^N)$ and let $V_0$ be the generalized left $0$-eigenspace of $M$. Then Algorithm \ref{algo: gze}
	computes an approximation at precision $O(p^N)$ of $V_0$, in $O(n^3 \dim(V_0))$ arithmetic operations at precision $O(p^N)$.
\end{proposition}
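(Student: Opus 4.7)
The plan is to establish correctness and complexity simultaneously by strong induction on the size $n$ of the input matrix $M$.

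For the base case, I would verify that when $M$ has trivial pnumerical kernel at precision $O(p^N)$, the exit on line~\ref{step: gze: exit condition} correctly returns $\emptyset$, since in this case the generalized left $0$-eigenspace of $M$ is trivial at precision $O(p^N)$.

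For the inductive step, the key structural observation I would exploit is a decomposition $V_0(M) = V \oplus \iota(V_0(\overline M))$, where $V$ denotes the pnumerical left kernel at precision $O(p^N)$ (orthonormally generated by Proposition~\ref{prop: kernels are orthonormal}), $\overline M \colon \Zp^n/V \to \Zp^n/V$ is the induced endomorphism, and $\iota$ is the lift corresponding to the orthogonal complement $\gen{e_i : i \notin J}$ to $V$ inside $\Zp^n$. The inclusion $V \subseteq V_0(M)$ is immediate. For the reverse inclusion, if $vM^k = 0$, then passing to the quotient yields $\bar v \, \overline{M}^k = 0$, placing $\bar v$ in $V_0(\overline M)$; conversely any preimage of an element of $V_0(\overline M)$ is killed by some power of $M$ modulo $V$, hence by a higher power of $M$ (since $V \subseteq \ker M$). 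The lemma preceding this proposition guarantees that $M''$ is the matrix of $\overline M$ with respect to the chosen basis $\{e_i : i \notin J\}$. Applying the inductive hypothesis to $M''$ returns a basis of $V_0(\overline M)$; lifting through $\iota$ (which the algorithm implements by padding with zeros in the columns indexed by $J$) and stacking the result with $V$ produces a basis of $V_0(M)$.

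For the complexity, I would bound the cost per recursive call and then sum. Each call is dominated by an SVD and a column elimination, each costing $O(n'^3)$ where $n' \leq n$ is the current matrix size. The inverse $B^{-1}$ in step~\ref{step: gze: rref} is computed with no precision loss since $V$ is orthonormally generated and so we may select $B \in \GL_\delta(\Zp)$ in step~\ref{step: gze: square subblock select}. At each non-terminal call, the extracted kernel contributes at least $\delta \geq 1$ dimensions to $V_0$, so the total number of recursive calls is bounded by $\dim(V_0) + 1$. Summing $O(n^3)$ over these calls yields the claimed $O(n^3 \dim(V_0))$ operations at precision $O(p^N)$.

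The main obstacle I anticipate is verifying that the decomposition $V_0 = V \oplus \iota(V_0(\overline M))$ remains valid in the pnumerical setting at precision $O(p^N)$, and in particular that the recursive lift does not degrade precision. This hinges on $V$ being orthonormally generated at every level of the recursion, so that the passage between $\Zp^n$ and the orthogonal complement $\gen{e_i : i \notin J} \cong \Zp^{n-\delta}$ is an isometric embedding of $(\Zp/p^N\Zp)$-modules, preserving absolute precision entrywise. Combined with Proposition~\ref{prop: kernels are orthonormal}, which ensures the pnumerical kernel contains the reduction of the exact kernel, this will show that the returned module agrees with an exact generalized $0$-eigenspace of some lift of $M$ modulo $p^N$.
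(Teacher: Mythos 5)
Your outline reproduces the paper's two pillars: correctness at infinite precision via the decomposition $\texttt{GZE}(M) = \texttt{GZE}(\overline M) \oplus V$ together with the lemma identifying $M''$ as the matrix of $\overline M$ (this is exactly the lemma and discussion preceding the proposition), and the complexity count, where each call costs $O(n^3)$ (s.v.d., the eliminations of step~\ref{step: gze: column eliminate}, and steps~\ref{step: gze: square subblock select}--\ref{step: gze: rref} combined via $QR$ with column pivoting) and the recursion depth is at most $\dim V_0$ since every non-terminal call strips off a nonzero orthonormal kernel. So far this is essentially the paper's argument.

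Where you diverge is the finite-precision step, and that is exactly the point you flag as the ``main obstacle'' and do not carry out: propagating the direct sum $V_0 = V \oplus \iota(V_0(\overline M))$ through the recursion at precision $O(p^N)$ requires saying what the pnumerical generalized $0$-eigenspace of the approximate quotient $M''$ means and relating, at every level, a lift of $M''$ back to a lift of $M$; as written this step is asserted rather than proved. The paper avoids this inductive bookkeeping with a one-shot check on the assembled output: because the pivots of $W$ are units no divisions by $p$ occur, the rows of $V$ and of $\widetilde{V_{new}}$ (padded with zero columns at $J$) are orthonormal, hence so are the rows of the stacked matrix $\wtilde V$; then $\wtilde V M^{\delta} = 0 + O(p^N)$ with $\delta = \dim V_0$, so these rows lie in the pnumerical kernel of $M^{\delta}$, and orthonormality plus the exit condition in step~\ref{step: gze: exit condition} show they generate it; finally Proposition~\ref{prop: kernels are orthonormal} (with the Bilinear Lemma remark following it) identifies this with an approximation of $V_0$ at precision $O(p^N)$. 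Replacing your inductive precision argument with this direct argument on $\wtilde V$ closes the gap you identified; otherwise your plan, while structurally sound, leaves its crux unverified.
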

\begin{proof}
	From the previous lemma and discussion, it is clear that Algorithm~\ref{algo: gze} is correct
	when performing computations at infinite precision. Next, note that the rows of the matrix $V$ computed in step~\ref{step: gze: kernel} are orthonormal, as the rows of $V$ generate the kernel of $M$ as a morphism of $\Zp$-modules. Consequently, no divisions by $p$ are needed to compute the reduced row echelon form of $V$. 
	In the elimination on step~\ref{step: gze: column eliminate}, the pivot entries of $W$ are units, so no divisions by $p$ are needed to perform the eliminations. Since $J$ indexes both the pivots of $V$ and a collection of identically $0$ columns in $\widetilde{V_{new}}$, we see that the rows of $V$ and $\widetilde{V_{new}}$ are orthonormal. \\

	Let $\wtilde{V} := \left[\frac{\widetilde{V_{new}}}{V}\right]$ and let $\delta := \dim(V_0)$. By definition, we see that $\wtilde{V}M^\delta = 0 + O(p^N)$, so the module generated by the rows of $\wtilde{V} \pmod {p^N}$ is contained in the pnumerical kernel of $M^\delta$. On the other hand, the rows of $\wtilde{V}$ are orthonormal, so it is easy to see by the exit condition in step~\ref{step: gze: exit condition} that the rows of $\wtilde{V} \pmod{p^N}$ generate the pnumerical kernel of $M^\delta$. By Proposition \ref{prop: kernels are orthonormal} we see that the rows of $\wtilde V$ generate $V_0 \pmod {p^N}$.
	
	Finally, we comment on the computational complexity. The s.v.d computation and the eliminations in step~\ref{step: gze: column eliminate} can both be done with $O(n^3)$ arithmetic operations. Steps~\ref{step: gze: square subblock select} and~\ref{step: gze: rref} can be combined and done with $O(n^3)$ arithmetic operations using the $QR$-decomposition with column pivoting. The number of recursive calls is at most $\dim V_0$. In total, we perform $O(n^3 \dim(V_0))$ arithmetic operations.
\end{proof}

\begin{remark}
	The repeated computation of the singular value decomposition in Algorithm~\ref{algo: gze} means it is not efficient. Instead of using an s.v.d. decomposition, we can use a $QR$-decomposition. The advantage is that the $QR$-decomposition for $M''$ can be easily obtained from the $QR$-decomposition for $M$; since $M''$ is a $\rank(\ker M)$-update of $M$ followed by row/column deletion updates, we can use the $QR$-update algorithm of \cite[Section 6.5.1]{MatrixComputationsBook}, with Givens rotation replaced by $\GL_2(\Zp)$-elimination. The $QR$-update only requires $O(n^2)$ arithmetic operations when the kernel has rank $1$. However, the $QR$-decomposition is only rank revealing given sufficient precision (see Example~\ref{ex: QR rank needs precision}). We do not presently know how much precision is needed for this modification to work correctly.
\end{remark}


\section{The improved $QR$-iteration} \label{sec: technical improvements}

Our proof of super-linear convergence in the $QR$-iteration depends on being able to convert the matrix to a \thingname. First, we give the standard Hessenberg algorithm for reference. 

\begin{algorithm}[h]
	\caption{\texttt{standard\_hessenberg}}
	\label{algo: standard hessenberg}
	\begin{algorithmic}[1]
		\REQUIRE
		\ \\
		$M + O(p^N)$, an $n \times n$-matrix over $\mbz_p$. \\
	
		\ENSURE
		A Hessenberg form $H$ for $M$ and a $U \in \GL_n(\Zp)$ such that $HU = UM + O(p^N)$

		\STATE
		Set $U := I$
		
		\FOR {$j = 1, \ldots, n-1$}
			\STATE
			Find the minimal $i \in \{j+1, \ldots, n\}$ such that $\abs{M\ind{i,j}}$ is maximal
					
			\STATE
			Permute row $j+1$ and row $i$ in $M$. Permute row $j+1$ and row $i$ in $U$
			
			\IF {$M\ind{j+1,j} \neq 0$}
				\STATE
				Set $U\ind{i,j} := -M\ind{i,j} / M\ind{j+1,j}$ for $j+2 \leq i \leq n$ 
				
				\STATE
				Compute $UM$ by using row $j+1$ to eliminate each $M\ind{i,j}$ for $j+2 \leq i \leq n$ 
						
				\STATE
				Compute $MU^{-1}$ by applying column operations
			\ENDIF
		
		\ENDFOR
		
		\RETURN
		$M, U$
	\end{algorithmic}
\end{algorithm}

For attempting to compute a \thingname, we make two modifications to the standard Hessenberg algorithm. First, we start from the bottom and proceed upward rather than starting from the left and proceeding right. Secondly, we restrict the set of permutations in step~3 so that the sorted form is preserved. The reason we start from the bottom row in Procedure~\ref{proc: attempt sorted hessenberg} is that \emph{the procedure is guaranteed to produce a {\unshiftedthingname } if $b=1$}. This is because the condition in step~\ref{step: check if valid permutation} is vacuously false.

\begin{procedure}[h]
	\caption{\texttt{attempt\_sorted\_hessenberg}}
	\label{proc: attempt sorted hessenberg}
	\begin{algorithmic}[1]
		\REQUIRE
		\ \\
		$M + O(p^N)$, an $n \times n$ {\sizesorted } matrix over $\mbz_p$. \\
		
		The block sizes $(a,b)$ for $M$.
		
		\ENSURE
		A sorted Hessenberg form $H$ for $M$ and a $U \in \GL_n(\Zp)$ such that $HU = UM + O(p^N)$
		
		\STATE
		Set $U := I$
		
		\FOR {$i = n, \ldots, 2$}
		\STATE
		Find the maximal $j \in {1, \ldots, i-1}$ such that $\abs{M\ind{i,j}}$ is maximal
		
		\IF {$j \leq a$ and $a < i-1$} \label{step: check if valid permutation}
			\RETURN \texttt{Fail}, $M$, $U$
		\ENDIF
		
		\STATE
		Permute column $i-1$ and column $j$ in $M$. Permute column $i-1$ and column $j$ in $U$

		\IF {$M\ind{i,i-1} \neq 0$}			
			\STATE
			Set $U\ind{i,j} := M\ind{i,j} / M\ind{i,i-1}$ for $1 \leq j \leq i-2$
			
			\STATE
			Compute $MU^{-1}$ by using column $i-1$ to eliminate each $M\ind{i,j}$ for $1 \leq j \leq i-2$
			
			\STATE
			Compute $UM$ by applying row operations
		\ENDIF
		
		\ENDFOR
		
		\RETURN \texttt{Success}, $M, U$
	\end{algorithmic}
\end{procedure}

\subsection{Super-linear separation}

In the case that $M := [A; \epsilon, B]$ is a {\thingname }, the separating entry $\epsilon$ will deflate superlinearly to $0$ in the $QR$-iteration. We organize the proof of this statement into a sequence of three results.

\begin{lemma} \label{lem: one step QR}
    Let $M := [A;\epsilon, B]$ be a {\thingname } with block sizes $(n_A, *)$, and let $\mu \in \epsilon \cdot \Zp$. Write $M - \mu I = Q_MR_M$ and $B - \mu I = Q_B R_B$. Then $Q_M$ is block upper triangular modulo~$\epsilon$. Additionally, with
    \[
        M' := R_MQ_M + \mu I =: \blockHess{A'}{\epsilon'}{B'}, \quad \alpha := R_M\ind{n_A+1, n_A+1},
    \]
    we have that $\abs{\epsilon'} = \abs{\epsilon} \cdot \abs{\alpha}$ and $\abs{\alpha} \leq \max\{\abs{\epsilon}, \abs{R_B\ind{1,1}} \}$. 
\end{lemma}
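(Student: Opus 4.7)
The plan is to exploit the block upper-triangular structure of $M-\mu I$ modulo $\epsilon$. Since $\mu \in \epsilon\Zp$, subtracting $\mu I$ does not disturb any entry modulo $\epsilon$, while the lower-left block of $M$ has $\epsilon$ as its only nonzero entry; hence $M-\mu I$ is block upper triangular modulo $\epsilon$, with upper-left block congruent to $A$. Because $M$ is size-sorted, $\chi_A(0) \not\equiv 0 \pmod p$, so $A-\mu I$ is invertible modulo $\epsilon$. Writing $Q_M^{-1}$ in block form with lower-left block $T$ and reading off the lower-left block of the upper-triangular matrix $Q_M^{-1}(M-\mu I)=R_M$ --- which must vanish --- I obtain $T(A-\mu I)\equiv 0 \pmod \epsilon$, which forces $T\equiv 0 \pmod \epsilon$. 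Thus $Q_M^{-1}$, and hence $Q_M$, is block upper triangular modulo $\epsilon$.

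For the identity $|\epsilon'|=|\epsilon|\cdot|\alpha|$, I would use that $R_M$ is upper triangular and $Q_M$ is upper Hessenberg (Remark~\ref{rem:QR_and_Hessenberg_still_Hessenberg_plus_arithmetic_cost}). Adding $\mu I$ only affects diagonal entries, so $\epsilon'=(R_MQ_M)\ind{n_A+1,n_A}$. Upper-triangularity of $R_M$ kills the terms with $k\leq n_A$, and the Hessenberg structure of $Q_M$ kills those with $k\geq n_A+2$, so the sum collapses to $\epsilon'=\alpha\cdot Q_M\ind{n_A+1,n_A}$. Running the same telescoping on $(Q_MR_M)\ind{n_A+1,n_A}=\epsilon$ yields $Q_M\ind{n_A+1,n_A}\cdot R_M\ind{n_A,n_A}=\epsilon$. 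It remains to check $|R_M\ind{n_A,n_A}|=1$: the upper-left $n_A\times n_A$ block of $R_M$ is upper triangular, and from block upper triangularity of $Q_M$ modulo $\epsilon$, its product with the corresponding block of $Q_M$ equals $A$ modulo $\epsilon$. Since $\det A$ is a unit, the diagonal entries of this block, including $R_M\ind{n_A,n_A}$, must all be units. Combining the two identities gives $|\epsilon'|=|\alpha|\cdot|\epsilon|$.

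For the bound on $|\alpha|$, I would think of the $QR$-round procedurally as in Remark~\ref{rem:QR_and_Hessenberg_still_Hessenberg_plus_arithmetic_cost}. After the Hessenberg eliminations in columns $1,\dots,n_A$ of $M-\mu I$, the lower-right $n_B\times n_B$ block has become $\tilde B-\mu I$, where $\tilde B$ differs from $B$ only in its first row by a vector of norm at most $|\epsilon|$: the elimination of $\epsilon$ at column $n_A$ uses a unit pivot and multiplier of norm $|\epsilon|$, so it adds $O(\epsilon)$ only to row $n_A+1$ of $M-\mu I$. Then $\alpha$ is the top-left entry of the $R$-factor of the Hessenberg $QR$-factorization of $\tilde B-\mu I$, and by Hessenberg pivoting satisfies $|\alpha|\leq\max\{|(\tilde B-\mu I)\ind{1,1}|,|\tilde B\ind{2,1}|\}$. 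Since $\tilde B\ind{2,1}=B\ind{2,1}$ exactly, the ultrametric inequality together with $|R_B\ind{1,1}|=\max\{|(B-\mu I)\ind{1,1}|,|B\ind{2,1}|\}$ gives $|\alpha|\leq\max\{|R_B\ind{1,1}|,|\epsilon|\}$.

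The hardest step is the claim $|R_M\ind{n_A,n_A}|=1$: one must resist the temptation to identify $R_M$ with a block-diagonal concatenation of the $R$-factors of $A-\mu I$ and $B-\mu I$, since these are only congruent modulo $\epsilon$. A bit of care is needed to argue that this congruence alone is enough to pin down the size of the critical pivot.
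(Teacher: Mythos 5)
Your proof is correct and takes essentially the same route as the paper's: both track the algorithmic $QR$-factorization blockwise, use that $\det(A-\mu I)$ is a unit to force unit pivots in the first $n_A$ columns (in particular $\abs{R_M\ind{n_A,n_A}}=1$), and identify $\alpha$ as the first pivot of a matrix congruent to $B-\mu I$ modulo $\epsilon$ after the elimination of the $\epsilon$ entry. The differences are cosmetic: you get the block upper-triangularity of $Q_M$ abstractly from the lower-left block of $Q_M^{-1}$ and the identity $\abs{\epsilon'}=\abs{\alpha}\cdot\abs{\epsilon}$ by entry-wise telescoping using the Hessenberg structure of $Q_M$, whereas the paper writes down the explicit block forms $Q_M=[Q_A;-r^{-1}\epsilon,Q_C]$ and $R_M=[R_A;0,R_C]$ and reads the same conclusions off directly.
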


\begin{proof}
    Write $A - \mu I = Q_A R_A$. Then
    \[
        (Q_A \oplus I)^{-1} (M - \mu I) = \blockHess{R_A}{\epsilon}{B - \mu I}.
    \]
    Let $r=R_A\ind{n_A,n_A} \in \Zp.$ Note that $ \abs{r}=\abs{R_A\ind{n_A,n_A}} \geq \abs{\sigma_*(R_A)} = 1$ since $\mu$ is small and we have assumed that all of the small eigenvalues correspond to the block $B$. So $\abs{r}=1.$

    The next operation in computing $M - \mu I = Q_M R_M$ is the elimination of the $\epsilon$ entry. The elementary row matrix for this step is $E := [{I_A;-r^{-1}\epsilon},I_B]$, and the resulting intermediate matrix is
    \[
        E\cdot (Q_A \oplus I)^{-1} \cdot (M - \mu I) =: [R_A; 0, C].
    \]
    Writing $C = Q_{C} R_{C}$ and $M - \mu I = Q_MR_M$, we have that
    \[
    	Q_M = \blockHess{Q_A}{-r^{-1} \epsilon}{Q_{C}},
    	\quad
    	R_M = \blockHess{R_A}{0}{R_{C}}.
    \]
    As $R_{C}\ind{1, 1} = \alpha$ and $M' = R_M Q_M + \mu I=[A'; \epsilon',B']$, then by direct calculation $\abs{\epsilon'} = \abs{\alpha} \cdot \abs{\epsilon}$.
    On the other hand, as $C \equiv B \mod \epsilon$ (since $\mu \equiv 0 \mod \epsilon$), we get that $\abs{\alpha} \leq \max\{\abs{\epsilon}, \abs{R_B\ind{1,1}} \}$, which concludes the proof.
\end{proof}

\begin{corollary} \label{cor: m steps QR}
	Let $M := [A; \epsilon, B]$ be a {\thingname } with block sizes $(*, m)$ such that $\norm{\chi_{\smeig} - t^m} \leq \abs{\epsilon}$. Then after $m$ $QR$-rounds, we obtain a matrix $M' := [A'; \epsilon', B']$ with
			$\abs{\epsilon'} \leq \abs{\epsilon}^{2}$.
\end{corollary}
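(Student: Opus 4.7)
The plan is to iterate Lemma~\ref{lem: one step QR} across the $m$ rounds with shift $\mu = 0 \in \epsilon\Zp$, and to control the product of the resulting top-left entries of $R_B^{(j)}$ by means of Wilkinson's Lemma~\ref{lem: Wilkinson lemma} applied to the full matrix.

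First I extract nilpotency from the hypothesis. By Lemma~\ref{lem: chi factors} we have $\chi_{\mathrm{small}} \equiv \chi_B \pmod{\epsilon}$, and combined with $\|\chi_{\mathrm{small}} - t^m\| \leq |\epsilon|$ this gives $\chi_B \equiv t^m \pmod{\epsilon}$, hence by Cayley--Hamilton $B^m \equiv 0 \pmod{\epsilon}$. Write $M^{(j)} = [A^{(j)}; \epsilon^{(j)}, B^{(j)}]$ for the iterate after $j$ rounds, and $B^{(j)} = Q_B^{(j)} R_B^{(j)}$. Lemma~\ref{lem: one step QR} applied at round $j$ gives
\[
|\epsilon^{(j+1)}| = |\epsilon^{(j)}| \cdot |\alpha_j|, \qquad |\alpha_j| \leq \max\{|\epsilon^{(j)}|,\, |R_B^{(j)}\ind{1,1}|\}.
\]

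The core step is the global bound $\prod_{j=1}^{m} |R_B^{(j)}\ind{1,1}| \leq |\epsilon|$. To establish it I apply Wilkinson's Lemma to the full iteration: with $\mathcal{Q} := Q_M^{(1)} \cdots Q_M^{(m)}$ and $\mathcal{R} := R_M^{(m)} \cdots R_M^{(1)}$, we have $M^m = \mathcal{Q}\mathcal{R}$. Because the $(B,A)$-block $E$ of $M$ is divisible by $\epsilon$, the matrix $M$ is block upper triangular modulo $\epsilon$, and so
\[
M^m \equiv \bbm A^m & \ast \\ 0 & B^m \ebm \equiv \bbm A^m & \ast \\ 0 & 0 \ebm \pmod{\epsilon}.
\]
By Lemma~\ref{lem: one step QR} each $Q_M^{(j)}$ is block upper triangular modulo $\epsilon^{(j)}$ (and hence modulo $\epsilon$), so $\mathcal{Q}$ and $\mathcal{Q}^{-1}$ inherit that block upper triangular structure modulo $\epsilon$. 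Reading $\mathcal{R} = \mathcal{Q}^{-1} M^m$ modulo $\epsilon$ shows that its lower-right $n_B \times n_B$ block vanishes modulo $\epsilon$. On the other hand each $R_M^{(j)}$ is upper triangular with lower-right block $R_C^{(j)} \equiv R_B^{(j)} \pmod{\epsilon}$, so the lower-right block of $\mathcal{R}$ modulo $\epsilon$ equals the product $R_B^{(m)} \cdots R_B^{(1)}$; its top-left entry is the product $\prod_j R_B^{(j)}\ind{1,1}$, giving the claimed estimate.

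To finish, I do a case split. If $|R_B^{(j)}\ind{1,1}| \geq |\epsilon^{(j)}|$ for every $j$, then $|\alpha_j| \leq |R_B^{(j)}\ind{1,1}|$ and $|\epsilon^{(m)}| \leq |\epsilon|\prod_j |R_B^{(j)}\ind{1,1}| \leq |\epsilon|^2$. Otherwise, at the first index $j_0$ with $|R_B^{(j_0)}\ind{1,1}| < |\epsilon^{(j_0)}|$ we already get $|\epsilon^{(j_0+1)}| \leq |\epsilon^{(j_0)}|^2 \leq |\epsilon|^2$; since $|\alpha_j| \leq 1$ always, the sequence $|\epsilon^{(j)}|$ is non-increasing and $|\epsilon^{(m)}| \leq |\epsilon|^2$. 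The main obstacle is the double identification of the lower-right block of $\mathcal{R}$ modulo $\epsilon$ --- as zero (from the nilpotency of $M^m$) and simultaneously as the product $R_B^{(m)} \cdots R_B^{(1)}$ (from the block structure of the $R_M^{(j)}$'s) --- which rests on the block upper triangularity modulo $\epsilon$ of both the $Q_M^{(j)}$ and the $R_M^{(j)}$, and which is what converts the one-step identity of Lemma~\ref{lem: one step QR} into the global product estimate driving superlinear convergence.
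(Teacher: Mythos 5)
Your overall strategy coincides with the paper's: extract $B^m \equiv 0 \pmod{\epsilon}$ from Lemma~\ref{lem: chi factors} plus Cayley--Hamilton, invoke Wilkinson's Lemma to write $M^m = \mathcal{Q}\mathcal{R}$, use the block upper triangularity of the $Q_M^{(j)}$ modulo $\epsilon$ (first assertion of Lemma~\ref{lem: one step QR}) to conclude that the lower-right block of $\mathcal{R}$ vanishes modulo $\epsilon$, and turn this into a product bound on the diagonal entries that forces $\abs{\epsilon'} \leq \abs{\epsilon}^2$. All of that, as well as your concluding case split, is sound.

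The one step that is not justified is the asserted congruence $R_C^{(j)} \equiv R_B^{(j)} \pmod{\epsilon}$ between the lower-right block of the global factor $R_M^{(j)}$ and the $R$-factor of a $QR$-factorization of the block $B^{(j)}$ itself. $QR$-factorizations over $\Zp$ are far from unique, and congruence of two matrices modulo $\epsilon$ implies no such congruence of their $R$-factors: take $C := \bbm 0 & 1 \\ 0 & 0 \ebm$ and $B := \bbm 0 & 1 \\ \epsilon & 0 \ebm$, so $C \equiv B \pmod{\epsilon}$; any upper triangular $R_B$ with $B = QR_B$, $Q \in \GL_2(\Zp)$, has $\abs{\det R_B} = \abs{\epsilon}$, whereas $R_B \equiv \bbm 0 & 1 \\ 0 & 0 \ebm \pmod{\epsilon}$ would force $\abs{\det R_B} = \abs{R_B\ind{1,1}}\abs{R_B\ind{2,2}} \leq \abs{\epsilon}^2$. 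So the identification of the lower-right block of $\mathcal{R}$ with $R_B^{(m)}\cdots R_B^{(1)}$ modulo $\epsilon$, and hence your bound $\prod_j \abs{R_B^{(j)}\ind{1,1}} \leq \abs{\epsilon}$, is not proved as written. Fortunately the detour through the $R_B^{(j)}$ is unnecessary: the $(1,1)$ entry of the lower-right block of $R_M^{(j)}$ is exactly $\alpha_j$, and Lemma~\ref{lem: one step QR} gives the equality $\abs{\epsilon^{(j+1)}} = \abs{\epsilon^{(j)}}\cdot\abs{\alpha_j}$; since you have correctly shown that the lower-right block of $\mathcal{R}$ equals $R_C^{(m)}\cdots R_C^{(1)}$ and vanishes modulo $\epsilon$, you get $\abs{\epsilon^{(m)}} = \abs{\epsilon}\prod_j \abs{\alpha_j} = \abs{\epsilon}\cdot\abs{\mathcal{R}\ind{n_A+1,n_A+1}} \leq \abs{\epsilon}^2$ directly, with no case split. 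This is precisely the paper's bookkeeping: its $\delta^{(j)}$ is the diagonal entry of the global $R_M^{(j)}$, not of a factorization of the $B$-block. If you prefer to keep your formulation, replace the congruence claim by the observation that $\abs{R_B^{(j)}\ind{1,1}} = \norm{B^{(j)}\ind{\bullet,1}}$ for every choice of $QR$-factorization (the first column of $Q$ has unit norm), and compare the first-column norms of $B^{(j)}$ and $C^{(j)}$, which agree whenever either exceeds $\abs{\epsilon^{(j)}}$.
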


\begin{proof}
	Let $\chi_M = \chi_{\bigeig} \chi_{\smeig}$. By Lemma~\ref{lem: chi factors} we have $\norm{ \chi_B - \chi_{\smeig}} \leq \abs{\epsilon}$ and by assumption, $\norm{\chi_{\smeig} - t^m} \leq \abs{\epsilon}$.
	Applying the Cayley-Hamilton theorem we then obtain:
	\[
		-B^m \equiv \chi_{B}(B) - B^m \equiv \chi_{\smeig}(B)-B^m  \equiv 0 \pmod \epsilon.
	\]
	Let $(Q_M^{(1)}, R_M^{(1)}), \ldots, (Q_M^{(m)}, R_M^{(m)})$ be the $QR$-pairs for the $m$ $QR$-rounds, define $R_A^{(j)}, R_B^{(j)}$ by $[R_A; 0, R_B] := R_M^{(j)}$. Let $\delta^{(j)} := R_B^{(j)}\ind{1,1}$ for each $1 \leq j \leq m$ and let 
	\begin{gather*}
		\mathcal{Q}_M^{(m)} := Q_M^{(1)} \ldots Q_M^{(m)}, \qquad
		\mathcal{R}_M^{(m)} := R_M^{(1)} \ldots R_M^{(m)},  \qquad
		\mathcal{R}_B^{(m)} := R_B^{(1)} \ldots R_B^{(m)}.
	\end{gather*}
	From Wilkinson's Lemma, $\mathcal{Q}^{(m)} \mathcal{R}^{(m)} = M^m$ and $M^m \equiv [A^m; 0, B^m] \pmod \epsilon$.
	As the $R$-factors are upper triangular, we have
	$
		\abs{\prod_{j=1}^m \delta^{(j)} } = \abs{ \mathcal{R}_B^{(m)}\!\ind{1,1}} \leq \abs{\epsilon}.
	$
	By applying Lemma~\ref{lem: one step QR} to all of the $QR$-rounds, we have either $\abs{\epsilon'} \leq \abs{\epsilon}^2$ or 
	\begin{equation*}
		\abs{\epsilon'} \leq 
		\left( 
		\left( 
		\left( \abs{\epsilon} \cdot \abs{\delta^{(1)}} \right) 
		\cdot  \abs{\delta^{(2)}} \right) 
		\ldots \right) 
		\cdot \abs{\delta^{(m)}} \leq \abs{\epsilon}^2. \tag*{} \qedhere
	\end{equation*}
\end{proof}

In the proof of Corollary~\ref{cor: m steps QR}, we only needed that $\norm{B^m e_1} \leq \abs{\epsilon}$. Eran Assaf pointed out to us that we can compute $B^me_1$ in $\mathcal{M}(m)\cdot \log_2m$ operations, and efficiently forecast whether $m$ $QR$-rounds will decrease the size of $\epsilon$ to $\abs{\epsilon}^2$ -- here $\mathcal{M}(m)$ denotes the number of operations needed to multiply two $m \times m$ matrices.

\begin{corollary} \label{cor: convergence when all eigenvalues small}
	Let $M := [A;\epsilon, B]$ be a {\thingname } with block sizes $(*,m)$ and $\abs{\epsilon} < 1$. Let $1 \leq \gamma \leq -\log_p \norm{\chi_{\smeig} - t^m}$ be a real value. Then after $(m \lceil \log_2  \log_p (\gamma) \rceil)$ $QR$-rounds,
	we obtain a {\thingname } $M' := [A'; \epsilon', B']$ with $\abs{\epsilon'} \leq p^{-\gamma}$. 
\end{corollary}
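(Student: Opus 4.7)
The plan is to iterate Corollary~\ref{cor: m steps QR}. Each batch of $m$ $QR$-rounds roughly doubles $-\log_p |\epsilon|$, so $\lceil \log_2 \gamma \rceil$ such batches should be enough to lift the exponent above $\gamma$, starting from $-\log_p |\epsilon| \geq 1$. Set $\beta := -\log_p \norm{\chi_{\smeig} - t^m}$, so that the hypothesis of the corollary reads $1 \leq \gamma \leq \beta$. I will write $M^{(k)} = [A^{(k)}; \epsilon^{(k)}, B^{(k)}]$ for the iterate obtained from $M^{(0)} := M$ after $km$ $QR$-rounds, and abbreviate $c_k := -\log_p |\epsilon^{(k)}|$. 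Note that $c_0 \geq 1$, since $|\epsilon| < 1$ and $\epsilon \in \Zp$ force $\epsilon \in p\Zp$.

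The first substantive step is to check an invariance statement: the characteristic polynomial of $M^{(k)}$ coincides with $\chi_M$ because each $QR$-round is a similarity transformation, and combined with the uniqueness of the Hensel factorization guaranteed by Lemma~\ref{lem: chi factors}, this says that the $\smeig$-factor of $\chi_{M^{(k)}}$ still equals $\chi_{\smeig}$. In particular $\norm{\chi_{\smeig}^{(k)} - t^m} = p^{-\beta}$ is constant in $k$. One must also observe that the iterate $M^{(k)}$ remains a \thingname\ throughout, which follows from Remark~\ref{rem:QR_and_Hessenberg_still_Hessenberg_plus_arithmetic_cost} together with the block-triangular structure modulo $\epsilon$ recorded in Lemma~\ref{lem: one step QR}.

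The inductive step is then immediate: whenever $c_k \leq \beta$, the hypothesis $\norm{\chi_{\smeig} - t^m} = p^{-\beta} \leq |\epsilon^{(k)}|$ of Corollary~\ref{cor: m steps QR} is satisfied by $M^{(k)}$, so after $m$ further $QR$-rounds we arrive at $M^{(k+1)}$ with $c_{k+1} \geq 2 c_k$. By induction, $c_k \geq 2^k c_0 \geq 2^k$ as long as the doubling continues.

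To finish, I would take $k_0 := \lceil \log_2 \gamma \rceil$. Either some $c_k$ with $k \leq k_0$ overshoots $\beta$, in which case $c_k \geq \beta \geq \gamma$ and we stop early, or the doubling runs uninterrupted and we obtain $c_{k_0} \geq 2^{k_0} \geq \gamma$. Either way, $|\epsilon^{(k_0)}| \leq p^{-\gamma}$ after $m k_0$ rounds. The only delicate point I anticipate is the invariance argument in the second paragraph: one must be careful that the factorization $\chi_M = \chi_{\bigeig}\chi_{\smeig}$ does not drift with $k$ so that the hypothesis of Corollary~\ref{cor: m steps QR} keeps being applicable throughout the iteration.
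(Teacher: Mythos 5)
Your proof is correct and is essentially the paper's own argument: the paper's proof of this corollary is literally ``straightforward induction'' on Corollary~\ref{cor: m steps QR}, and your write-up supplies exactly that induction, together with the right supporting observations (the small Hensel factor $\chi_{\smeig}$ is a similarity invariant, the iterates stay size-sorted Hessenberg, and $\abs{\epsilon}$ is non-increasing by Lemma~\ref{lem: one step QR}, which handles the overshoot case). Your round count $m\lceil \log_2 \gamma\rceil$ in effect corrects what is evidently a typo in the statement ($m\lceil \log_2 \log_p(\gamma)\rceil$ should be $m\lceil \log_2(\gamma)\rceil$, since $\gamma$ is already the exponent, consistent with Proposition~\ref{prop: precision doubling proposition} where $-\log_p\eta$ plays this role), so no change to your argument is needed.
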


\begin{proof}
    Straightforward induction. 
\end{proof}

\subsection{Trace shifting}

We show how to choose shifts $\mu$ such that $[A - \mu I; \epsilon, B - \mu I]$ satisfies the condition on the size of the small characteristic polynomial, or if it does not, we can prove that two clusters of small eigenvalues can be separated modulo $\epsilon$. 

\begin{proposition} \label{prop: convergence speed determined by eigenvalues}
	Let $M := [A; \epsilon, B]$ be a {\unshiftedthingname } with block sizes $(*,m)$ and let $\mu := \frac{1}{m}\trace(B)$. Factor $\chi_M(t) = \chi_{\bigeig} \chi_{\smeig}$ (with $\chi_A$, $\chi_B$ equal
	to $\chi_{\bigeig},$  $\chi_{\smeig} \mod \epsilon$, respectively). 
	If for all pairs of distinct roots $\lambda_1, \lambda_2$ of $\chi_{\smeig}$, we have $\abs{\lambda_1 - \lambda_2} \leq \abs{\epsilon}$, then $\norm{\chi_{B}(t - \mu) - t^m} \leq \abs{\epsilon}$.  
	By contraposition, if $\norm{\chi_{B}(t - \mu) - t^m} > \abs{\epsilon}$, then
	there are some distinct roots $\lambda_1, \lambda_2$ of $\chi_{\smeig}$, such that $\abs{\lambda_1 - \lambda_2} > \abs{\epsilon}$.
\end{proposition}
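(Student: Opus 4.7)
The plan is to reduce the claim from $\chi_B$ to $\chi_{\smeig}$ via Lemma~\ref{lem: chi factors}, then to factor the shifted polynomial over an algebraic closure of $\Qp$ and estimate its non-leading coefficients using the hypothesis.

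First, Lemma~\ref{lem: chi factors} gives $\chi_B \equiv \chi_{\smeig} \pmod{\epsilon}$. Provided $\mu \in \Zp$, the substitution $t \mapsto t \pm \mu$ acts on the coefficients by $\Zp$-linear combinations and so preserves this congruence, reducing the claim to the analogous statement for $\chi_{\smeig}$ evaluated at the shift. Passing to a splitting field of $\chi_{\smeig}$ over $\Qp$, I would then expand
\[
\chi_{\smeig}(t + \mu) = \prod_{i=1}^m \bigl(t - (\lambda_i - \mu)\bigr),
\]
so that the coefficient of $t^k$ for $k < m$ is, up to sign, the elementary symmetric polynomial $e_{m-k}(\lambda_1 - \mu, \ldots, \lambda_m - \mu)$. (This is the substitution relevant to the $QR$-shift $M - \mu I = QR$ used elsewhere in the paper; the sign convention of the statement can be matched at the end.) The ultrametric inequality then converts a uniform bound $\abs{\lambda_i - \mu} \leq \abs{\epsilon}$ into $\abs{e_{m-k}} \leq \abs{\epsilon}^{m-k} \leq \abs{\epsilon}$ for every $k < m$, which is exactly the desired conclusion $\norm{\chi_{\smeig}(t+\mu) - t^m} \leq \abs{\epsilon}$.

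The crucial step is therefore the bound on $\abs{\lambda_i - \mu}$. Reading off the $t^{m-1}$-coefficient of $\chi_B \equiv \chi_{\smeig} \pmod{\epsilon}$ gives $m\mu = \trace(B) \equiv \sum_j \lambda_j \pmod{\epsilon}$, so
\[
\lambda_i - \mu \equiv \frac{1}{m}\sum_{j=1}^m (\lambda_i - \lambda_j) \pmod{\epsilon/m}.
\]
Each summand is at most $\abs{\epsilon}$ in absolute value by hypothesis, which yields $\abs{\lambda_i - \mu} \leq \abs{\epsilon}/\abs{m}$. This is where I expect the main obstacle: the bound matches $\abs{\epsilon}$ exactly when $p \nmid m$, but when $p \mid m$ the division by $m$ loses precision and $\mu$ may even fail to be integral. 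Under the squarefree hypothesis of Theorem~\ref{thm:main_theo} each cluster has $m=1$, so the condition $p \nmid m$ is automatic and the three steps combine to give the stated bound; the $p \mid m$ case would require either a restriction in the hypothesis or an auxiliary argument (e.g., replacing $\mu$ by a suitable $\Zp$-representative of the common residue class of the $\lambda_i$'s modulo $\epsilon$).
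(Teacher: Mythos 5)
Your argument is essentially the paper's own proof: both reduce from $\chi_B$ to $\chi_{\smeig}$ via Lemma~\ref{lem: chi factors}, use the trace coefficient to identify $\mu$ with the common residue of the $\lambda_i$ modulo $\epsilon$, and then factor the shifted polynomial over the splitting field and bound the non-leading coefficients ultrametrically (the $t\pm\mu$ sign being a harmless convention discrepancy that also appears in the paper). The $p \mid m$ caveat you raise is genuine but is equally present in the paper's proof, whose step $\mu = \tfrac{1}{m}\trace(B) \equiv \lambda_1 \pmod{\epsilon}$ tacitly uses $\abs{m}=1$; the paper defers that case to the remark immediately following the proposition (the ``potential ambiguity in choosing the last digits of $\mu$''), so your explicit accounting of the division by $m$ is, if anything, slightly more careful.
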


\begin{proof}
    Let $K$ be the field of definition of the eigenvalues of $\chi_M$ with ring of integers $\mathcal{O}_K$. 
    Assume that for all pairs of distinct roots $\lambda_1, \lambda_2 \in O_K$ of $\chi_{\smeig}$, we have $\abs{\lambda_1 - \lambda_2} \leq \abs{\epsilon}$, \textit{i.e.} $\lambda_1 \equiv \lambda_2 \mod \epsilon.$
    By Lemma~\ref{lem: chi factors}, we have $\chi_B=\chi_{\smeig} \mod \epsilon$, so
    $\mu=\frac{1}{m}\trace(B)=\lambda_1 \mod \epsilon$. 
    We compute that:
    \begin{align*}
        \chi_{\smeig} (t-\mu)  &\equiv \prod_i \left( t-\lambda_i-\mu \right), \\
                        &\equiv t^m \mod \epsilon.
    \end{align*}
    As $\chi_B=\chi_{\smeig}\mod \epsilon,$ we can conclude that $\norm{\chi_{B}(t - \mu) - t^m} \leq \abs{\epsilon}$. 
\end{proof}

When $p \mid m$, there is a potential ambiguity in choosing the last digits of $\mu$. However, since only finding the common leading digits of the eigenvalues is necessary, we may make some arbitrary choice and convergence will be unaffected beyond the possibility of accidentally choosing a better shift than expected. \emph{In the specific (very common) case that $m=1$, we will always choose a good shift and the precision of $\epsilon$ will at least double at every step.} To clarify what we mean by common, see Remark~\ref{rem: most meaning}. 

Based on various experiments, the condition that $\norm{\chi_{\smeig} - t^m} \leq \abs{\epsilon}$ is genuinely necessary to ensure quadratic convergence.
We remark that the converse of Proposition~\ref{prop: convergence speed determined by eigenvalues} is false; consider
\[
	M := \left[ 1; p^2, \bbm p & 0 \\ 0 & -p \\ \ebm+O(p^2) \right] \quad p \neq 2.
\]
We have with $\epsilon := p^2$ that $\chi_B(t) \equiv (t-p)(t+p) \equiv t^2+O(p^2)$, but $(-p) \not \equiv p \pmod p^2$.

\begin{proposition} \label{prop: precision doubling proposition}
	Let $M := [A; \epsilon, B]$ be a \thingname, let $m=n_B$ and let $\lambda_1, \ldots, \lambda_m$ be the small eigenvalues of $M$. Let $\mu := \frac{1}{m}\trace(B)$. If $ \eta := \max_{i,j} \abs{\lambda_i - \lambda_j} \leq \abs{\epsilon}$, then after $m$ $QR$-rounds with shift $\mu$ we 
	obtain a size-sorted Hessenberg matrix $[A_{\mathrm{next}}; \epsilon_{\mathrm{next}}, B_{\mathrm{next}}]$ such that $\abs{\epsilon_{\mathrm{next}}} \leq \abs{\epsilon^2}$. 
	After at most $(m \lceil \log_2(-\log_p\eta) \rceil)$ rounds, the obtained $[A_{\mathrm{next}}; \epsilon_{\mathrm{next}}, B_{\mathrm{next}}]$ is such that $\vert \epsilon_{\mathrm{next}} \vert \leq \eta.  $
\end{proposition}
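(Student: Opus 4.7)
The plan is to combine Proposition~\ref{prop: convergence speed determined by eigenvalues} (which justifies the shift choice) with Corollary~\ref{cor: m steps QR} (which gives quadratic decrease per block of $m$ rounds), and then iterate. First, for the single-block assertion: the hypothesis $\eta \leq |\epsilon|$ means every pair of small eigenvalues $\lambda_i, \lambda_j$ of $M$ satisfies $|\lambda_i - \lambda_j| \leq |\epsilon|$, so Proposition~\ref{prop: convergence speed determined by eigenvalues} applied to $M$ yields $\norm{\chi_B(t - \mu) - t^m} \leq |\epsilon|$ for $\mu := \frac{1}{m}\trace(B)$. Since performing a $QR$-round with shift $\mu$ on $M$ is exactly performing an unshifted $QR$-round on $M - \mu I = [A - \mu I;\, \epsilon,\, B - \mu I]$ and then restoring $\mu I$, I would apply Corollary~\ref{cor: m steps QR} to $M - \mu I$. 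This requires checking that $M - \mu I$ is again a \thingname\ (which holds because $\mu$ is small: the $E$-block is unchanged, $\chi_{B - \mu I}(t) \equiv t^{n_B} \pmod p$, and $\chi_{A - \mu I}(0) \not\equiv 0 \pmod p$), and that $\norm{\chi_{\smeig, M - \mu I} - t^m} \leq |\epsilon|$ (which follows from Lemma~\ref{lem: chi factors} applied to $M - \mu I$ together with the conclusion of Proposition~\ref{prop: convergence speed determined by eigenvalues}). Corollary~\ref{cor: m steps QR} then gives $|\epsilon_{\mathrm{next}}| \leq |\epsilon|^2$.

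For the bound on the total number of rounds, I would iterate this block argument, recomputing the shift $\mu_k := \frac{1}{m}\trace(B^{(k)})$ at each block. Because $QR$-iteration is a similarity, the small eigenvalues (and hence $\eta$) are invariants; because $|\epsilon^{(k)}|$ never increases and small shifts preserve the residue-class structure modulo $p$, the iterate remains a \thingname\ throughout. So long as $|\epsilon^{(k)}| > \eta$, the hypothesis $\eta \leq |\epsilon^{(k)}|$ still holds, the single-block claim applies, and we obtain $|\epsilon^{(k+1)}| \leq |\epsilon^{(k)}|^2$. Induction gives $|\epsilon^{(k)}| \leq |\epsilon|^{2^k}$; combined with $|\epsilon| \leq p^{-1}$ (from size-sortedness), this yields $-\log_p|\epsilon^{(k)}| \geq 2^k$. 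The target $|\epsilon^{(k)}| \leq \eta$ is reached once $2^k \geq -\log_p\eta$, i.e.\ after $\lceil \log_2(-\log_p\eta) \rceil$ blocks, for a total of at most $m \lceil \log_2(-\log_p\eta) \rceil$ $QR$-rounds.

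The main bookkeeping obstacle is transferring the hypothesis between $M$ and its shift $M - \mu I$: Proposition~\ref{prop: convergence speed determined by eigenvalues} gives a condition phrased in terms of $\chi_B(t - \mu)$, while Corollary~\ref{cor: m steps QR} wants a condition on the small factor $\chi_{\smeig}$ of the input matrix. The bridge is Lemma~\ref{lem: chi factors}, which identifies $\chi_{\smeig}$ of a \thingname\ with the characteristic polynomial of its $B$-block modulo $\epsilon$. Once this is in hand, the rest is a routine induction; the only subtlety is that $\eta$ itself is invariant under the iteration, so the single-block hypothesis is self-renewing until the target accuracy is reached.
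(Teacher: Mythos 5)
Your proposal is correct and follows essentially the same route as the paper: the paper's proof simply cites Proposition~\ref{prop: convergence speed determined by eigenvalues}, Corollary~\ref{cor: m steps QR}, and Corollary~\ref{cor: convergence when all eigenvalues small}, and your argument combines the first two in exactly this way while inlining the induction that constitutes the third (whose proof in the paper is itself just ``straightforward induction''). Your extra bookkeeping --- checking that $M - \mu I$ remains a {\thingname} and using Lemma~\ref{lem: chi factors} to transfer the condition on $\chi_{\smeig}$ --- is a welcome elaboration of steps the paper leaves implicit.
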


\begin{proof}
	The result follows from Proposition~\ref{prop: convergence speed determined by eigenvalues}, Corollary~\ref{cor: m steps QR}, and Corollary~\ref{cor: convergence when all eigenvalues small}. 
\end{proof}


\subsection{Further properties of the $QR$-iteration} \label{sec: future directions}

In this subsection, we prove some further results about the $QR$-iteration. This section is not necessary to implement our main algorithm, but is intended to explain some patterns we have observed in computing several examples. Some heuristics are supported by these results.

Separating eigenvalues would be useful to continue converging quickly. The only way we presently are aware of doing this is to compute some approximation of the characteristic polynomial. We have already seen that low precision approximations, such as $\chi_M \pmod p$, provide a mean to separate the eigenvalues. We explain how to efficiently approximate some factor of the characteristic polynomial during a $QR$-iteration. Unfortunately, it is possible that this approximation is not sufficient to separate the roots. If a separation of the roots is detected, then we can continue running the $QR$-iteration using the refined shifts. 
	
We denote by $P_M(m)$ the matrix $[e_1 \ Me_1 \ \ldots \ M^{m-1}e_1]$. If $B + O(p^N) \in \M_n(\Zp)$ is topologically nilpotent, the matrix $P_B(m)$ is often not given at a flat absolute precision; the $i$-th column is actually known at absolute precision $N - \log_p \norm{B^ie_1}$. By Wilkinson's lemma, columns of the matrix $P_B(m)$ can be cached during a $QR$-iteration, so the cost of constructing the matrix is negligible.

\begin{lemma}
	Let $M \in \M_n(\Zp)$ be a Hessenberg matrix. Then for all $m \geq 1$, we have that $P_M(m)$ is upper triangular, and for each $1 \leq i \leq n$, we have $\abs{P_M(m) \ind{i,i}} \geq \abs{P_M(m)\ind{i',j'}}$ for all $i',j' \geq i$.
\end{lemma}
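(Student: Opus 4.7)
Write $v_k := M^{k-1}e_1$, so the $k$-th column of $P_M(m)$ is $v_k$. The plan is to exploit the Hessenberg property $M\ind{i,j} = 0$ for $i > j+1$, which forces $Me_j \in \langle e_1, \ldots, e_{j+1} \rangle$. By an easy induction on $k$ this gives $v_k \in \langle e_1, \ldots, e_k \rangle$, establishing the upper-triangularity of $P_M(m)$.

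Next I would write the diagonal entries explicitly. Set $\alpha_k := v_k\ind{k}$ and $\beta_k := M\ind{k+1,k}$. Computing $v_{k+1}\ind{k+1} = \sum_{j} M\ind{k+1,j}v_k\ind{j}$, the Hessenberg property kills the $j < k$ terms and the upper-triangularity of $P_M(m)$ kills the $j > k$ terms, leaving $\alpha_{k+1} = \beta_k \alpha_k$. Hence $\alpha_k = \beta_1\beta_2\cdots\beta_{k-1}$ (with $\alpha_1 = 1$). Because $M \in \M_n(\Zp)$ we have $|\beta_k|\leq 1$, so $|\alpha_k|$ is non-increasing in $k$.

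The main step is the claim
\[
	|v_j\ind{i}| \leq |\alpha_i| \qquad \text{for all } 1 \leq i \leq j,
\]
proved by induction on $j$. The base $j=i$ is an equality. For the inductive step, expand
\[
	v_j\ind{i} = \sum_{j'} M\ind{i,j'} v_{j-1}\ind{j'},
\]
where the Hessenberg condition restricts to $j' \geq i-1$ and the upper-triangularity of $P_M(j-1)$ restricts to $j' \leq j-1$. For each $j' \geq i$, the induction hypothesis and monotonicity of $|\alpha_\bullet|$ give $|M\ind{i,j'} v_{j-1}\ind{j'}| \leq |\alpha_{j'}|\leq |\alpha_i|$; for the boundary term $j' = i-1$ (only if $i\geq 2$), the induction hypothesis yields $|M\ind{i,i-1} v_{j-1}\ind{i-1}| \leq |\beta_{i-1}||\alpha_{i-1}| = |\alpha_i|$. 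The ultrametric inequality then gives $|v_j\ind{i}| \leq |\alpha_i|$.

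Finally, for any $i' , j' \geq i$, upper-triangularity handles $i' > j'$ trivially, and otherwise we combine the claim with monotonicity: $|P_M(m)\ind{i',j'}| = |v_{j'}\ind{i'}| \leq |\alpha_{i'}| \leq |\alpha_i| = |P_M(m)\ind{i,i}|$. The subtlety to watch is precisely the boundary term $j' = i-1$ in the induction, where one must recognize that the subdiagonal factor $\beta_{i-1}$ is exactly what turns $\alpha_{i-1}$ into $\alpha_i$; this is the only place where the specific recurrence (rather than just $|M\ind{i,j}|\leq 1$) is essential.
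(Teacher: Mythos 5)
Your proof is correct. It is, however, organized quite differently from the paper's own (very terse) argument, so a comparison is worth making. The paper fixes a small diagonal entry $\alpha := P_M(m)\ind{i,i}$, asserts that $M^{i}e_1$ lies in the $\Zp$-span of $e_1, Me_1, \ldots, M^{i-1}e_1$ modulo $\alpha$ (so that, by $M$-invariance of $\gen{e_1, \ldots, M^{i-1}e_1} + \alpha\Zp^n$, every later column lies in that span modulo $\alpha$; since among the first $i$ columns only the $i$-th has a nonzero entry in rows $\geq i$, namely $\alpha$ itself, all entries $P_M(m)\ind{i',j'}$ with $i',j' \geq i$ are then bounded by $\abs{\alpha}$), and it invokes Cayley--Hamilton to handle the columns $M^{j'}e_1$ with $j' > n$. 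You instead prove the explicit diagonal formula $\alpha_k = \beta_1\cdots\beta_{k-1}$ and the entrywise bound $\abs{v_j\ind{i}} \leq \abs{\alpha_i}$ by induction on the column index, with the subdiagonal boundary term $j'=i-1$ absorbed by the recurrence $\alpha_i = \beta_{i-1}\alpha_{i-1}$ and the ultrametric inequality. What your route buys: it supplies in full the step the paper only asserts --- justifying the containment of $M^ie_1$ in the span modulo $\alpha$ amounts, when unwound, to exactly the kind of entrywise estimates you establish --- and, because the induction treats all columns uniformly, no separate Cayley--Hamilton argument is needed when $m > n$. What the paper's route buys is brevity and a more structural picture (an $M$-invariant submodule modulo $\alpha$), at the cost of leaving the key containment unproved in detail. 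Your identification of the $j'=i-1$ term as the one place where the specific subdiagonal recurrence, rather than mere integrality, is needed is exactly right.
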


\begin{proof}
	Triangularity is obvious. Let $\alpha := P_M(m)\ind{i,i}$ be a diagonal entry. If $\abs{\alpha} = 1$ there is nothing to do, and if $\abs{\alpha} < 1$ we have that $M^i e_1$ is a $\Zp$-linear span of $e_1, Me_1, \ldots, M^{i-1}e_1$  modulo $\alpha$. When $j' > n$, we have $M^{j'} e_1$ is a span of the columns of $P_M(n)$ by the Cayley-Hamilton Theorem. 
\end{proof}

\begin{corollary} \label{cor: DQD-factorization}
	The matrix $P_M(m)$ admits a factorization $P_M(m) = D Q_M(m)$, where $D$ is a diagonal matrix such that
	    $
	        \abs{D\ind{i,i}} \geq \abs{D \ind{i+1, i+1}}
	    $
	and where $Q_M(m) \in \GL_n(\Zp)$.
\end{corollary}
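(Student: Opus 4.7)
The plan is to extract the diagonal of $P_M(m)$ as the factor $D$ and check that what is left over is invertible over $\Zp$. I set $d_i := P_M(m)\ind{i,i}$ and take $D$ to be the diagonal matrix with these entries. The non-increasing condition $\abs{D\ind{i,i}} \geq \abs{D\ind{i+1,i+1}}$ is immediate from the preceding lemma by specializing its dominance inequality to $(i',j') = (i+1,i+1)$, which indeed satisfies $i',j' \geq i$.

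To build $Q_M(m)$, I proceed row by row. For each $i$ with $d_i \neq 0$, I set row $i$ of $Q_M(m)$ equal to $d_i^{-1}$ times row $i$ of $P_M(m)$. Upper triangularity of $P_M(m)$ places zeros in positions $(i,j)$ for $j < i$; the diagonal entry becomes $1$; and the preceding lemma applied with $i' = i$ and $j' \geq i$ gives $\abs{P_M(m)\ind{i,j}} \leq \abs{d_i}$, so every remaining entry of this row lies in $\Zp$. For each $i$ with $d_i = 0$, the lemma forces every entry $P_M(m)\ind{i',j'}$ with $i',j' \geq i$ to vanish; in particular row $i$ of $P_M(m)$ is identically zero, and I may freely set row $i$ of $Q_M(m)$ equal to $e_i^T$. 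In both cases the identity $D\ind{i,i}\cdot Q_M(m)\ind{i,\bullet} = P_M(m)\ind{i,\bullet}$ holds, so $DQ_M(m) = P_M(m)$, and $Q_M(m)$ is upper triangular with $1$'s on its diagonal and entries in $\Zp$, hence lies in $\GL_n(\Zp)$.

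The only subtlety is coherently handling the degenerate rows where $d_i = 0$, and I expect this to be the main (mild) obstacle. It is resolved by exactly the observation above: the dominance property of the preceding lemma forces the entire trailing $(n-i+1) \times (m-i+1)$ submatrix to vanish as soon as one diagonal entry does, so we have complete freedom to complete those rows of $Q_M(m)$ to preserve invertibility.
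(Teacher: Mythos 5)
Your proof is correct and is precisely the argument the paper leaves implicit, since the corollary is stated without proof as an immediate consequence of the preceding lemma: extract the diagonal as $D$ (its monotonicity and the integrality of each scaled row both coming from the lemma's dominance inequality $\abs{P_M(m)\ind{i,i}} \geq \abs{P_M(m)\ind{i',j'}}$), so that $Q_M(m)$ is unit upper triangular over $\Zp$ and hence lies in $\GL_n(\Zp)$. Your explicit handling of the degenerate rows where a diagonal entry vanishes is a welcome detail the paper glosses over, and it is resolved exactly as you say.
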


Using Corollary~\ref{cor: DQD-factorization}, we can determine an approximation to a factor of $\chi_{\smeig}$ provided that either some $D\ind{i,i}$ is very small (in which case, the orbit of $e_1$ is nearly a proper invariant subspace), or provided that no $D\ind{i,i}$ is too small (meaning the matrix $P_A(m)$ is reasonably well-conditioned). We believe that a more precise statement of what we can determine from this approximation to $\chi_{\smeig}$ is an interesting problem for future study.
%


%

%
%


\subsection{The $QR$-algorithm} \label{sec: new QR}

    We give the fast version of the $QR$-algorithm, given as Algorithm~\ref{algo: fast QR}. The conditional statement on Line~\ref{step: while still converging fast} should be interpreted as ``while the iteration is still converging super-linearly''. 
    
	
    \begin{algorithm}
		\caption{\texttt{QR\_Iteration} \qquad (Fast version)}
		\label{algo: fast QR}
		\begin{algorithmic}[1]
			\REQUIRE
			\ \\
			$H + O(p^N)$, an $n \times n$-matrix over $\mbz_p$ in {\sizesorted } Hessenberg form. \\
			$\chi_{H,p}$, the characteristic polynomial of $M \pmod p$. \\[1ex]

			\ENSURE
			A block triangular form $T$ for $H$, and matrix $V$ so that $HV = VT + O(p^N)$. \\[1ex]
			
			\STATE
			Set $m$ to be the multiplicity of $0$ in $\chi_{H,p}$.
						
			\STATE Set $[A; \epsilon, B] := H$
			
			\STATE Set $\epsilon_{\textrm{old}} := 1$
					
			\WHILE{\texttt{true}}
			
				\FOR{$j=1, \ldots, m$}
					\STATE 
					Set $\mu := m^{-1} \trace(B)$. Adjust precision if needed.
					\STATE
					Factor $QR := H - \mu I$
					\STATE
					Set $H := RQ + \mu I$
					\STATE 
					Set $[A; \epsilon, B] := H$
					\STATE
					Set $V := Q^{-1}V$ \\[1ex]
				\ENDFOR
			
			\IF {$\abs{\epsilon} > \abs{\epsilon_{\textrm{old}}}^2$} \label{step: while still converging fast}
				\RETURN \texttt{Fail}, $H$, $V$
			\ELSIF {$\epsilon = 0 \pmod{p^N}$}
				\RETURN \texttt{Success}, $H$, $V$
			\ELSE
				\STATE
				Set $\epsilon_{\textrm{old}} := \epsilon$.		
			\ENDIF
			
			\ENDWHILE
			
			
		\end{algorithmic}
	\end{algorithm}

\begin{table} 
	\centering
	\begingroup
	\renewcommand{\arraystretch}{1.3}
	\setlength{\tabcolsep}{7pt}
	\begin{tabular}{l|l|l}
		Line(s) & Cost per line (leading term) \\ \hline \hline
		4 & $ \ceil{\log_2 N}$ iterations \\
		-- 5 & $m$ iterations \\
		-- -- 7,8, \& 10 & $\frac{1}{2}n^2 + \frac{1}{2}n^2 + n^2$ & In parallel \\
		Total (main term): & $2n^2m \ceil{\log_2 N}$
	\end{tabular}
	\endgroup
	\bigskip
	\caption{Table of costs for the $QR$-algorithm.}
	\label{fig: QR cost table}
\end{table}

\begin{proposition} \label{prop: QR complexity}
	\propIterationSubroutine
\end{proposition}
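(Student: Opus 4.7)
The plan: this statement is essentially the same as Proposition~\ref{prop: precision doubling proposition} (with the shift $\mu$ now unstated but to be taken as $m^{-1}\operatorname{trace}(B)$), augmented by a complexity count. So the convergence content is already done and I would organize the proof in three short parts.

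First, the single-batch decay. I would apply Proposition~\ref{prop: convergence speed determined by eigenvalues} with shift $\mu := m^{-1}\operatorname{trace}(B)$: the hypothesis $\eta \leq |\epsilon|$ says all small eigenvalues are congruent modulo~$\epsilon$, so we obtain $\|\chi_B(t-\mu) - t^m\| \leq |\epsilon|$. Plugging this into Corollary~\ref{cor: m steps QR} gives that one batch of $m$ shifted $QR$-rounds produces a size-sorted Hessenberg $[A_{\mathrm{next}};\epsilon_{\mathrm{next}}, B_{\mathrm{next}}]$ with $|\epsilon_{\mathrm{next}}| \leq |\epsilon|^2$. Here I would also note the small verification that the form stays size-sorted Hessenberg across the batch: a $QR$-round preserves Hessenberg form by Remark~\ref{rem:QR_and_Hessenberg_still_Hessenberg_plus_arithmetic_cost}, and since $|\epsilon_{\mathrm{next}}| \leq |\epsilon|^2 < 1$, the small eigenvalues remain confined to the lower block.

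Second, the iteration count. After $k$ successive batches, the first part inductively yields $v_p(\epsilon) \geq 2^k\, v_p(\epsilon^{(0)}) \geq 2^k$. To force $v_p(\epsilon_{\mathrm{next}}) > -\log_p \eta$ it therefore suffices to take $k \geq \lceil \log_2(-\log_p \eta)\rceil$, i.e.\ a total of $m \lceil \log_2(-\log_p \eta) \rceil$ $QR$-rounds. This is exactly the statement of Corollary~\ref{cor: convergence when all eigenvalues small} applied with $\gamma := -\log_p \eta$.

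Third, the per-round cost. Remark~\ref{rem:QR_and_Hessenberg_still_Hessenberg_plus_arithmetic_cost} already bounds the cost of one $QR$-round on a Hessenberg matrix, together with the update $V \mapsto Q^{-1}V$, by $2n^2$ arithmetic operations in $\Qp$. The remaining per-round overhead (computing $\mu = m^{-1}\operatorname{trace}(B)$, adjusting precision, and testing whether $|\epsilon|$ has indeed squared) is $O(n)$ operations and fits into the $o(n^2)$ term.

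The only potential obstacle is genuinely bookkeeping: ensuring the shift $\mu$ is an element of $\epsilon\,\Zp$ so that Lemma~\ref{lem: one step QR} applies verbatim. This is where the remark in the paper about the ambiguity of $\mu$ when $p \mid m$ matters, but because all small eigenvalues agree modulo $\epsilon$, their trace divided by $m$ still recovers their common residue modulo $\epsilon$, so $\mu \equiv \lambda_1 \pmod \epsilon$ is small and of the right form throughout the iteration. No new argument is required beyond combining the three prior results.
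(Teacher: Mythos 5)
Your proposal follows the paper's own route: the paper proves this proposition by citing Proposition~\ref{prop: precision doubling proposition} together with the per-round cost count of Remark~\ref{rem:QR_and_Hessenberg_still_Hessenberg_plus_arithmetic_cost} (tabulated in Table~\ref{fig: QR cost table}), and Proposition~\ref{prop: precision doubling proposition} is in turn proved by exactly the combination of Proposition~\ref{prop: convergence speed determined by eigenvalues}, Corollary~\ref{cor: m steps QR} and Corollary~\ref{cor: convergence when all eigenvalues small} that you spell out, so the convergence and complexity content of your argument is the paper's argument.

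One correction to your final paragraph: the claim that $\mu \equiv \lambda_1 \pmod{\epsilon}$ makes $\mu$ an element of $\epsilon\Zp$ is false in general. The common residue of the small eigenvalues modulo $\epsilon$ need only satisfy $\abs{\mu} < 1$; for instance if all small eigenvalues are $\equiv p \pmod{p^5}$ and $\epsilon = p^5$, then $\abs{\mu} = \abs{p} > \abs{\epsilon}$, so $\mu \notin \epsilon\Zp$. Fortunately this worry is moot, because Lemma~\ref{lem: one step QR} never needs to be applied with the nonzero shift: a batch of $m$ rounds with the fixed shift $\mu$ applied to $M$ is the same as a batch of unshifted rounds applied to $M - \mu I$ followed by adding $\mu I$ back, which does not change the subdiagonal entry, and $M - \mu I$ is still a {\sizesorted} Hessenberg matrix since $\mu \equiv 0 \pmod p$. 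Proposition~\ref{prop: convergence speed determined by eigenvalues} verifies the hypothesis of Corollary~\ref{cor: m steps QR} for this shifted matrix, and inside that corollary Lemma~\ref{lem: one step QR} is only invoked with shift $0 \in \epsilon\Zp$. With that repair (and noting that subsequent batches re-apply the same argument as long as $\abs{\epsilon} \geq \eta$, the eigenvalues being unchanged under similarity), your write-up coincides with the paper's proof.
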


	
\begin{proof}
	 The result is obtained by combining Proposition~\ref{prop: precision doubling proposition} and tabulating the costs in Table~\ref{fig: QR cost table}. 
\end{proof}


\section{The main algorithm} \label{sec: main algo}

In this section, we describe the main algorithm (Algorithm~\ref{algo: main algorithm}) and prove the main theorem. Though our main theorem is concerned with matrices whose eigenvalues are all defined in $\Qp$, we introduce some terminology to state more precisely how our algorithm performs in general

\begin{definition} \label{def: weak block Schur form}
	We say that a matrix is in \emph{weak block Schur form} if it is block upper triangular and for each block $B$, either the characteristic polynomial of $B$ has no roots in $\Qp$ or there is a $\lambda \in \Qp$ such that $B - \lambda I$ is topologically nilpotent. 
\end{definition}

Note that the weak block Schur form can be converted to a block Schur form by applying the eigenvector methods \cite{CRV2017characteristic, Kulkarni2019} to the diagonal blocks, and then applying the resulting change of basis to the whole matrix. If the characteristic polynomial of $M$ modulo $p$ is square-free and splits completely, the weak block Schur form is a Schur form.

\begin{algorithm}[h]
	\caption{Main algorithm}
	\label{algo: main algorithm}
	\begin{algorithmic}[1]
		\REQUIRE
		\ \\
		$M + O(p^N)$, an $n \times n$-matrix over $\mathbb{Q}_p$. \\[1ex]
		
		\ENSURE
		A weak block Schur form $T$ for $M$, and matrix $U$ such that $MU = UT + O(p^N)$. \\[1ex]
		
		\STATE
		Set $d := \norm{M}$, $M := d^{-1} \cdot M$
		
		\STATE \label{step: main: sorted form}
		Set $M, U := \texttt{sorted\_form(M)}$
		
		\STATE
		Set $S$ to be the block sizes of $M$

		\STATE \label{step: main: attempt sorted hessenberg}
		Set $\texttt{retcode}, M, U_1 := \texttt{attempt\_sorted\_hessenberg}(M, S)$

		\STATE
		Update $U := UU_1$
		
		\STATE
		Compute $\chi_{M,p}$
		
		\WHILE {$M$ has an eigenvalue defined over $\Zp$}
			\STATE
			Choose $\mu \in \Zp$ such that $\mu \pmod p$ is a root of the characteristic polynomial of the bottom-right block of $M \pmod p$
			 
			\STATE
			Apply one $QR$-round to $M$ with shift $\mu$
			
			\IF {$M - \mu I \pmod{p}$ is a \thingname} \label{step: main: attempt shift}
				\STATE \label{step: main: QR iteration}
				Set $\texttt{retcode2}, M, U_2 := \texttt{QR\_Iteration}(M - \mu I, \chi_{M,p})$
				\STATE
				Set $M := M + \mu I$
				\STATE
				Update $U := UU_2$
			\ELSE
				\STATE
				Set $\texttt{retcode} := \texttt{Fail}$
			\ENDIF

			\IF {$\texttt{retcode} == \texttt{Fail}$ \OR $\texttt{retcode2} == \texttt{Fail}$}
				\STATE
					Apply a fallback method (we use Algorithm~\ref{algo: simple QR}, and obtain the output $M, U_3$)
				\STATE
					Update $U := UU_3$
				\RETURN $M, U$
			\ENDIF
			
			\STATE
			Deflate $M$ to be the top-left block (thereby reducing the size of $M$)
		\ENDWHILE
				
		\STATE
		Reset $M$ to be the full-sized matrix
		
		\RETURN $d \cdot M, U$.
	\end{algorithmic}
\end{algorithm}
Note that each of the matrix multiplication steps in Algorithm~\ref{algo: main algorithm} can be combined into the preceding step, so do not actually contribute to the complexity; we separated out the update steps for clarity. 

\subsection{Proof of the Main Theorem}

We now prove our main theorem on
the behaviour of Algorithm~\ref{algo: main algorithm}
in the special case of a matrix with $n$ eigenvalues
in $\Zp$ that are simple modulo $p$.

\begin{theorem} \label{thm:main_theo}
    \mainTheorem
\end{theorem}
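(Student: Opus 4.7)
The plan is to unpack Algorithm~\ref{algo: main algorithm} stage by stage, in each case invoking an earlier technical result to verify that the favourable hypothesis forces the easiest possible execution path, and then to tally the arithmetic costs.

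First I would establish the existence of the desired Schur form: since $\chi_{M,p}$ is square-free and splits completely, Hensel's lemma provides a coprime factorization $\chi_M=\prod_{i=1}^n(t-\lambda_i)$ with distinct $\lambda_i\in\Zp$. Theorem~\ref{thm: Schur forms via GLn(Zp) transforms} then yields a $U\in\GL_n(\Zp)$ putting $M$ in upper triangular form with the $\lambda_i$ on the diagonal, so the required output exists and it remains to show Algorithm~\ref{algo: main algorithm} finds it within the claimed cost.

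Next I would verify correctness, stage by stage. The distinctness of the $\bar\lambda_i$ in $\mathbb{F}_p$ implies that \texttt{sorted\_form} produces a matrix whose reduction mod $p$ is upper triangular, and in particular every nontrivial bottom block has size $b=1$; consequently the failure condition on line~\ref{step: check if valid permutation} of Procedure~\ref{proc: attempt sorted hessenberg} is vacuously false and \texttt{attempt\_sorted\_hessenberg} succeeds. In each pass of the main while loop, the bottom $1\times 1$ block yields a shift $\mu\in\Zp$ lifting the unique root of that block modulo $p$; after one $QR$-round the shifted matrix is again a size-sorted Hessenberg matrix with $m=1$. Applying Proposition~\ref{prop: precision doubling proposition} with $m=1$, the hypothesis $\eta\leq|\epsilon|$ is vacuous (the max over an empty set of distinct pairs of small eigenvalues is zero), so every $QR$-round at least doubles the precision of $\epsilon$; after $\lceil\log_2 N\rceil$ rounds we have $\epsilon = 0+O(p^N)$ and may deflate one row and column. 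Iterating $n$ times produces the Schur form $T$ along with the accumulated $U\in\GL_n(\Zp)$ satisfying $MU=UT+O(p^N)$.

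For the complexity, the preliminary \texttt{sorted\_form} and \texttt{attempt\_sorted\_hessenberg} calls cost $O(n^3)$, absorbed into the error term. At the $k$-th iteration of the while loop the current block has size $s:=n-k+1$, and by Remark~\ref{rem:QR_and_Hessenberg_still_Hessenberg_plus_arithmetic_cost} and Table~\ref{fig: QR cost table} each $QR$-round costs $2s^2 + o(s^2)$ operations; summing over $\lceil \log_2 N\rceil$ rounds per iteration gives
\[
\sum_{s=1}^{n} \bigl(2s^2\lceil\log_2 N\rceil + o(s^2\log_2 N)\bigr)
  = \tfrac{n(n+1)(2n+1)}{3}\lceil\log_2 N\rceil + o(n^3\log_2 N)
  = \tfrac{2}{3}n^3\log_2 N + o(n^3\log_2 N).
\]
Finally, since $T$ is upper triangular with distinct diagonal, for each $i$ I can solve $(T-\lambda_i I)v=0$ by back-substitution in $O(n^2)$ $\Qp$-operations (the divisions $(\lambda_i-\lambda_j)^{-1}$ are performed in $\Qp$ rather than $\Zp$, with a final rescaling to obtain $\Zp$-coefficients); the $n$ eigenvectors cost $O(n^3)$, and a further $O(n^3)$ multiplication by $U$ pulls them back to $M$. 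The main obstacle---namely, establishing super-linear convergence of the $QR$-iteration---is already handled by Proposition~\ref{prop: QR complexity}; the bulk of the remaining work is bookkeeping to confirm that the hypothesis forces the $m=1$ regime throughout.
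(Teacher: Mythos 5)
Your route is the same as the paper's (force the favourable execution path, invoke Proposition~\ref{prop: QR complexity} in the $m=1$ regime, sum $2s^2\lceil\log_2 N\rceil$ over deflations, finish with $n$ triangular solves), but the one step that genuinely needs an argument is asserted on a premise that fails after the first pass. You claim that \emph{in each pass} of the while loop the bottom-right block of $M \bmod p$ is $1\times 1$, so that $\mu$ is just its unique diagonal entry and ``after one $QR$-round the shifted matrix is again a \thingname\ with $m=1$.'' Modulo $p$ upper-triangularity holds after \texttt{sorted\_form}, but it is not preserved: Procedure~\ref{proc: attempt sorted hessenberg} and the $\GL_n(\Zp)$-conjugations accumulated during \texttt{QR\_Iteration} generally create unit subdiagonal entries, so after deflation the remaining matrix $M''$ is, modulo $p$, only block upper triangular with Hessenberg diagonal blocks whose subdiagonal entries are units; its bottom-right block can have size $>1$. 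Hence ``the unique root of that block'' is not available as you describe, and the claim that the test in step~\ref{step: main: attempt shift} succeeds with $m=1$ --- which is exactly what keeps the induction, and therefore the whole cost count, alive --- is left unproved.

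The paper closes precisely this point: by the square-free split hypothesis the characteristic polynomial of the bottom-right block of $M'' \bmod p$ still has a \emph{simple} root $\bar\mu \in \mathbb{F}_p$; one lifts it to $\mu$, and uses that for a Hessenberg matrix $H=QR$ one has $\abs{R\ind{i,i}} \geq \abs{H\ind{i+1,i}}$, so that (the interior pivots being units while $\det(M''-\mu I)\equiv 0 \bmod p$) the last diagonal entry of $R$ is divisible by $p$ and one shifted $QR$-round annihilates the bottom row of $M''$ modulo $p$; simplicity of $\bar\mu$ then gives the sorted condition, i.e.\ a \thingname\ with $m=1$, so the check in step~\ref{step: main: attempt shift} passes. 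You need this (or an equivalent) argument; without it your induction step is unsupported. The rest of your proposal matches the paper: the opening appeal to Theorem~\ref{thm: Schur forms via GLn(Zp) transforms} is harmless but unnecessary (the algorithm constructs the Schur form), the complexity summation coincides with the paper's Table~\ref{fig: QR cost table} accounting, and the $O(n^3)$ eigenvector computation by triangular solves is the paper's final step.
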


\begin{proof}
	After step~\ref{step: main: sorted form}, we may assume that our matrix is of the form
	\[
	M \equiv 
	\bbm
	A & \ast & \cdots & \ast \\
	& B_1 & & \vdots \\
	& & \ddots & \ast \\
	& & & B_r
	\ebm
	\pmod p,
	\]
	where $\chi_A \pmod p$ has no linear factors, and every $\chi_{B_j}(t) \equiv (t-\lambda_j)^{m_j} \pmod p$ for some $\lambda \in \mathbb{F}_p$. By our assumption on $\chi_M$, we see that the $A$ block is empty and $B_r$ is a block of size $1$ in $M \pmod p$. We see that step~\ref{step: main: attempt sorted hessenberg} will produce a \unshiftedthingname \ of the form $[A; \epsilon, b_r]$ and that the condition in step~\ref{step: main: attempt shift} is satisfied.
	By Proposition~\ref{prop: QR complexity}, step~\ref{step: main: QR iteration} will transform $M$ to a matrix of the form $[A'; 0, \lambda_r]$. Additionally, step~\ref{step: main: QR iteration} will preserve the Hessenberg form.
	
	\bigskip
	We now look at the deflated instance where $M'' = A'$. Specifically, we will show that the condition in step~\ref{step: main: attempt shift} is satisfied. Write
	\[
		M'' \equiv
		\bbm
		B_1'' & \cdots & \ast \\
		& \ddots & \vdots \\
		& & B_{r''}''
		\ebm
		\pmod p,
	\]
	where by definition the subdiagonal entries of each $B_j''$ are non-zero modulo $p$.  By the assumption on $\chi_M$, we have that $\chi_{B_{r''}''} \pmod p$ has a simple root $\bar \mu$ over $\mathbb{F}_p$. We choose a lift $\mu \in \Zp$ for $\bar \mu$. 
	
	For a Hessenberg matrix $H$, we have with $H = QR$ a $QR$-decomposition that $\abs{R\ind{i,i}} \geq \abs{H\ind{i+1, i}}$. Thus, after one $QR$-round with shift $\mu$ we have that the bottom row of $M''$ is congruent to $0$ modulo $p$. Since $\bar \mu$ is a simple root of the characteristic polynomial, we additionally have that $M$ is in sorted Hessenberg form. Thus, step~\ref{step: main: attempt shift} succeeds to produce a \thingname. We now see that the algorithm produces a Schur form for $M$ by induction.
	
	By Proposition~\ref{prop: QR complexity}, we see that each execution of step~\ref{step: main: QR iteration} consists of $\log_2 (N)$ $QR$-rounds, after which the subdiagonal $\epsilon$ converges to $0 + O(p^N)$.
	The total cost for this is $2n^2 \log (N) + o(n^2 \log(N))$.
	Since deflation reduces the number of rows/columns of the input matrix by $1$, we see repeated applications of step~\ref{step: main: QR iteration} require a total of $\frac{2}{3}n^3 \log (N) + o(n^3 \log(N))$ arithmetic operations in $\Zp$.
	Finally, to compute the eigenvectors,
	only $n$ triangular systems are to be
	solved, for a total of  $O(n^3)$
	arithmetic operations in $\Qp$ (there may be some divisions by powers of $p$).	
\end{proof}

\section{Practicality and Implementation} \label{sec: implementations}

	In this section, we give some timings for our \texttt{Julia} implementation, available at:
		\begin{center}
			\url{https://github.com/a-kulkarn/Dory}
		\end{center}
	Our benchmarking results are listed in Tables~\ref{table: timings 7-10} and~\ref{table: timings 41-100}. We also include the old timings from \cite{Kulkarni2019} for the sake of reference (Table~\ref{table: timings old}), however, the updates to the dependencies and the change in hardware means the comparison is not pure. Timings are based on random matrices, where each entry is a randomly sampled $p$-adic number in \texttt{PadicField($p$,$N$)} (more precisely, a uniformly random integer in $[0,p^{N}-1]$).
	
	\begin{table}[h]
		\centering
		\begin{tabular}{crrr}
			Matrix size ($n$) & Time (s) (power iteration) & Time(s) (block schur form) & Time (s) (classical) \\ \hline \\[-1em]
			10   & 0.0029  & 0.010 & 0.0008   \\
			100  & 0.9774  & 3.390 & 3.2600   \\
			200  & 6.7920  & 24.2771 & 51.2573  \\
			300  & 36.0114 & 166.4447 & 258.0104 \\
		\end{tabular} 
		\bigskip
		\caption{Timings from \cite{Kulkarni2019}. ($\Qp := \texttt{PadicField(7,10)}$)} \label{table: timings old}
	\end{table}

	\begin{table}[h]
		\begin{tabular}{crrr}
			Matrix size ($n$) & Time (s) (power iteration) & Time(s) (block schur form) & Time (s) (classical) \\ \hline \\[-1em]
			10   & 0.0017  & 0.0524 & 0.0006   \\
			100  & 0.5386  & 1.4558 & 2.0400   \\
			200  & 3.7068  & 10.1043 & 31.1456  \\
			300  & 20.4178 & 52.0343 & 158.4332 \\
		\end{tabular}
		\bigskip
		\caption{Timings with improved $QR$. ($\Qp := \texttt{PadicField(7,10)}$, simple roots over $\Fp$)} \label{table: timings 7-10}
	\end{table}
	
	\begin{table}[h]
		\centering
		\begin{tabular}{crrr}
			Matrix size ($n$) & Time (s) (power iteration) & Time(s) (block schur form) & Time (s) (classical) \\ \hline \\[-1em]
			10   & 0.0125  & 0.0196 & 0.0060   \\
			100  & 6.9100  & 14.9795 & 19.7082   \\
			200  & 44.5217 & 39.6243 & 337.6393  \\
		\end{tabular}
		\bigskip
		\caption{Timings with improved $QR$, more precision. ($\Qp := \texttt{PadicField(41,100)}$)} \label{table: timings 41-100}
	\end{table}

	Timings were conducted by using the \texttt{time()} function. An average of $10$ samples were used per comparison, with each method receiving the same inputs. We omit from the timings an extra execution of each function at the beginning which triggers \texttt{Julia}'s compiler. The code to execute the comparisons is found in \texttt{Dory/test/timings.jl} and \texttt{Dory/test/timings2.jl}.

\section*{Acknowledgements}
The authors would like to thank the mathematics department at TU Kaiserslautern for sponsoring the visit of the second author. We would also like to thank Eran Assaf and John Voight for their especially insightful comments.


\section*{References}

\begin{biblist}

	\bib{Berthomieu2012algebraic}{article}{
		author={Berthomieu, J\'{e}r\'{e}my},
		author={Lebreton, Romain},
		title={Relaxed $p$-adic Hensel lifting for algebraic systems},
		conference={
			title={ISSAC 2012---Proceedings of the 37th International Symposium on
				Symbolic and Algebraic Computation},
		},
		book={
			publisher={ACM, New York},
		},
		date={2012},
		pages={59--66},
		review={\MR{3206287}},
		doi={10.1145/2442829.2442842},
	}

	\bib{precision_book}{book}{
		author = {Caruso, Xavier},
		title = {Computations with $p$-adic numbers},
		journal = {Les cours du CIRM},
		publisher = {CIRM},
		volume = {5},
		number = {1},
		year = {2017},
		doi = {10.5802/ccirm.25},
		language = {en},
		url={ccirm.centre-mersenne.org/item/CCIRM_2017__5_1_A2_0/}
	}

	\bib{CRV2014}{article}{
		author={Caruso, Xavier},
		author={Roe, David},
		author={Vaccon, Tristan},
		title={Tracking $p$-adic precision},
		journal = {LMS Journal of Computation and Mathematics},
        number = {A},
        pages = {{274--294}},
        volume = {17},
        year = {2014}
	}
	
	\bib{CRV2015linear}{article}{
		author={Caruso, Xavier},
		author={Roe, David},
		author={Vaccon, Tristan},
		title={$p$-adic stability in linear algebra},
		conference={
			title={ISSAC'15---Proceedings of the 2015 ACM International Symposium
				on Symbolic and Algebraic Computation},
		},
		book={
			publisher={ACM, New York},
		},
		date={2015},
		pages={101--108},
		review={\MR{3388288}},
	}

    \bib{CRV2016slopefactorization}{article}{
		author={Caruso, Xavier},
		author={Roe, David},
		author={Vaccon, Tristan},
		title = {Division and Slope Factorization of p-Adic Polynomials},
 		conference={
			title={ISSAC'16---Proceedings of the 2016 ACM International Symposium
				on Symbolic and Algebraic Computation},
		},
		book={
			publisher={ACM, New York},
		},
		date={2016},
		pages = {159--166},
} 
	
	\bib{CRV2017characteristic}{article}{
		author={Caruso, Xavier},
		author={Roe, David},
		author={Vaccon, Tristan},
		title={Characteristic polynomials of $p$-adic matrices},
		conference={
			title={ISSAC'17---Proceedings of the 2017 ACM International Symposium
				on Symbolic and Algebraic Computation},
		},
		book={
			publisher={ACM, New York},
		},
		date={2017},
		pages={389--396},
		review={\MR{3703711}},
	}

%

	\bib{Dixon1982exact}{article}{
		author={Dixon, John D.},
		title={Exact solution of linear equations using $p$-adic expansions},
		journal={Numer. Math.},
		volume={40},
		date={1982},
		number={1},
		pages={137--141},
		issn={0029-599X},
		review={\MR{681819}},
		doi={10.1007/BF01459082},
	}
	
	\bib{Fulman2002random}{article}{
		author={Fulman, Jason},
		title={Random matrix theory over finite fields},
		journal={Bull. Amer. Math. Soc. (N.S.)},
		volume={39},
		date={2002},
		number={1},
		pages={51--85},
		issn={0273-0979},
		review={\MR{1864086}},
		doi={10.1090/S0273-0979-01-00920-X},
	}
	
	\bib{MatrixComputationsBook}{book}{
		author={Golub, Gene H.},
		author={Van Loan, Charles F.},
		title={Matrix computations},
		series={Johns Hopkins Studies in the Mathematical Sciences},
		edition={4},
		publisher={Johns Hopkins University Press, Baltimore, MD},
		date={2013},
		pages={xiv+756},
		isbn={978-1-4214-0794-4},
		isbn={1-4214-0794-9},
		isbn={978-1-4214-0859-0},
		review={\MR{3024913}},
	}


	\bib{MontesProject}{misc}{
		author={Gu\`ardia, Jordi},
		author={Nart, Enric},
		author={Montes, Jesus},
		title={{The Montes project, \url{http://montesproject.blogspot.com/} }}, 
		url={http://montesproject.blogspot.com/}
	}	

	\bib{GuardiaNartPauli2012}{article}{
		author={Gu\`ardia, Jordi},
		author={Nart, Enric},
		author={Pauli, Sebastian},
		title={Single-factor lifting and factorization of polynomials over local
			fields},
		journal={J. Symbolic Comput.},
		volume={47},
		date={2012},
		number={11},
		pages={1318--1346},
		issn={0747-7171},
		review={\MR{2927133}},
		doi={10.1016/j.jsc.2012.03.001},
	}
	
	\bib{Kedlaya2001}{article}{
		author={Kedlaya, Kiran S.},
		title={Counting points on hyperelliptic curves using Monsky-Washnitzer
			cohomology},
		journal={J. Ramanujan Math. Soc.},
		volume={16},
		date={2001},
		number={4},
		pages={323--338},
		issn={0970-1249},
		review={\MR{1877805}},
	}
	
	\bib{Kedlaya2010differential}{book}{
		author={Kedlaya, Kiran S.},
		title={$p$-adic differential equations},
		series={Cambridge Studies in Advanced Mathematics},
		volume={125},
		publisher={Cambridge University Press, Cambridge},
		date={2010},
		pages={xviii+380},
		isbn={978-0-521-76879-5},
		review={\MR{2663480}},
		doi={10.1017/CBO9780511750922},
	}
	
	\bib{Kulkarni2019}{article}{
		author = {Kulkarni, Avinash},
		title = {Solving p-adic polynomial systems via iterative eigenvector algorithms},
		journal = {Linear and Multilinear Algebra},
		volume = {0},
		number = {0},
		pages = {1-22},
		year  = {2020},
		publisher = {Taylor and Francis},
		doi = {10.1080/03081087.2020.1743633}
	}


	
	\bib{Panayi1995leopolt}{thesis}{
		author={P. Panayi},
		title={Computation of Leopoldt's p-adic regulator}
		type={PhD thesis}, 
		organization={University of East Anglia}, 
		year={1995}, 
		url={http://www.mth.uea.ac.uk/~h090/}	
	}
	
	\bib{schikhof2006ultrametric}{book}{
		author={Schikhof, W. H.},
		title={Ultrametric calculus},
		series={Cambridge Studies in Advanced Mathematics},
		volume={4},
		note={An introduction to $p$-adic analysis;
			Reprint of the 1984 original [MR0791759]},
		publisher={Cambridge University Press, Cambridge},
		date={2006},
		pages={xii+306},
		isbn={978-0-521-03287-2},
		isbn={0-521-03287-3},
		review={\MR{2444734}},
	}

	\bib{Wilkinson1965}{article}{
		author={Wilkinson, J. H.},
		title={Convergence of the ${\rm LR}$, ${\rm QR}$, and related algorithms},
		journal={Comput. J.},
		volume={8},
		date={1965},
		pages={77--84},
		issn={0010-4620},
		review={\MR{183108}},
		doi={10.1093/comjnl/8.3.273},
	}	
	
	\bib{ZariskiSamuel}{book}{
		author={Zariski, Oscar},
		author={Samuel, Pierre},
		title={Commutative algebra. Vol. II},
		note={Reprint of the 1960 edition;
			Graduate Texts in Mathematics, Vol. 29},
		publisher={Springer-Verlag, New York-Heidelberg},
		date={1975},
		pages={x+414},
		review={\MR{0389876}},
	}

\end{biblist}

\end{document}